\newtheorem{Th}{Theorem}
\newtheorem{Prop}[Th]{Proposition}
\newtheorem{Lemma}[Th]{Lemma}
\newtheorem{Cor}[Th]{Corollary}
\newtheorem{Def}[Th]{Definition}
\theoremstyle{definition}
\newtheorem{Remark}[Th]{Remark}
\newtheorem{Question}[Th]{Question}
\theoremstyle{plain}
\newcounter{MainTheoremCounter}
\theoremstyle{plain}
\newtheorem*{namedthm}{\namedthmname}
\newcounter{namedthm}
	\newenvironment{named}[2]
	{\def\namedthmname{#1}
	\refstepcounter{namedthm}
	\namedthm[#2]\def\@currentlabel{#1}}
	{\endnamedthm}
	\newtheorem{property}{Property}
	\newcommand{\setpropertytag}[1]{% \setpropertytag{<tag>}
	\let\oldtheproperty\theproperty% Store \theproperty
	\renewcommand{\theproperty}{#1}% Redefine it to a fixed value
	\g@addto@macro\endproperty{% At \end{property}, ...
	%\addtocounter{property}{-1}% ...restore property counter value and...
	\global\let\theproperty\oldtheproperty}% ...restore \theproperty
	\makeatother
  }
\newcommand{\beq}{\begin{equation}}
\newcommand{\eeq}{\end{equation}}
\def\scalar(#1,#2){(#1\mid#2)}
\newcommand{\cs}{{\cal S}}
\newcommand{\cb}{{\cal B}}
\newcommand{\cd}{{\cal D}}
\newcommand{\xbm}{(X,{\cal B},\mu)}
\newcommand{\ycn}{(Y,{\cal C},\nu)}
\newcommand{\ot}{\otimes}
\newcommand{\ov}{\overline}
\newcommand{\la}{\lambda}
\newcommand{\bs}{\mathbb{S}}
\newcommand{\A}{\mathbb{A}}
\newcommand{\R}{{\mathbb{R}}}
\newcommand{\T}{{\mathbb{T}}}
\newcommand{\C}{{\mathbb{C}}}
\newcommand{\D}{{\mathbb{D}}}
\newcommand{\Z}{{\mathbb{Z}}}
\newcommand{\N}{{\mathbb{N}}}
\newcommand{\vep}{\varepsilon}
\newcommand{\va}{\varphi}
\newcommand{\mob}{\boldsymbol{\mu}}
\newcommand{\bfu}{\boldsymbol{u}}
\newcommand{\bfv}{\boldsymbol{v}}
\newcommand{\lio}{\boldsymbol{\lambda}}
\newcommand{\tend}[3][]{\xrightarrow[#2\to#3]{#1}}
	\newcommand{\ind}[1]{\mathbbmss{1}_{#1}} %needs bbm
\providecommand{\noopsort}[1]{} %necessary for correct sorting of biblio with some names
\title{M\"obius disjointness for models of an ergodic system and beyond}
\author{H.\ El Abdalaoui\thanks{ Research supported by the special program in the framework of the Jean Morlet semester ``Ergodic Theory and Dynamical Systems
in their Interactions with Arithmetic and Combinatorics''.}\ , J.\ Ku\l aga-Przymus$^\ast$\thanks{Research supported by Narodowe Centrum Nauki grant  UMO-2014/15/B/ST1/03736.}\ , M.\ Lema\'nczyk$^{\ast\dag}$, T.\ de la Rue$^\ast$}
\begin{document}
\bibliographystyle{amsplaininitials}
\maketitle \normalsize

\thispagestyle{empty}

\begin{abstract} Given a topological dynamical system $(X,T)$ and an arithmetic function $\bfu\colon\N\to\C$, we study the strong MOMO property (relatively to $\bfu$) which is a strong version of  $\bfu$-disjointness with all observable sequences in $(X,T)$. It is proved that, given an ergodic measure-preserving system $(Z,\cd,\kappa,R)$, the strong MOMO property (relatively to $\bfu$) of  a uniquely ergodic model $(X,T)$ of $R$ yields all other uniquely ergodic models of $R$ to be $\bfu$-disjoint.  It follows that  all uniquely ergodic models of: ergodic unipotent diffeomorphisms on nilmanifolds, discrete spectrum automorphisms,  systems given by some substitutions of constant length (including the classical Thue-Morse and Rudin-Shapiro substitutions), systems determined by Kakutani sequences are  M\"obius (and Liouville) disjoint. The validity of Sarnak's conjecture implies the strong MOMO property relatively to $\mob$ in all zero entropy systems, in particular, it makes $\mob$-disjointness uniform. The
absence of strong MOMO property in positive entropy systems is discussed and, it is proved that, under the Chowla conjecture, a topological system
 has the strong MOMO property relatively to the Liouville function if and only if its topological entropy is zero.
\end{abstract}

\tableofcontents

\section{Introduction}

\paragraph{Sarnak's conjecture}
Assume that $T$ is a continuous map of a compact metric space $X$. In 2010, P.~Sarnak \cite{Sa} stated the following conjecture: whenever the (topological) entropy of $T$ is zero,
\beq\label{momoe1}
\frac1N\sum_{n\leq N}f(T^nx)\mob(n)\tend{N}{\infty}0\eeq
for all $f\in C(X)$ and $x\in X$ (we recall that the M\"obius function $\mob$ is defined as $\mob(p_1\ldots p_k)=(-1)^k$ for distinct primes $p_1,\ldots,p_k$, $\mob(1)=1$ and $\mob(n)=0$ for the remaining natural numbers). When~\eqref{momoe1} holds (for all $f\in C(X)$ and $x\in X)$, we also say that the system $(X,T)$ is {\em M\"obius disjoint}.
Sarnak's conjecture is of purely topological dynamics nature. However, measure-theoretic properties of the subshift $X_{\mob}\subset \{-1,0,1\}^\N$ considered with $\kappa$ that is a limit point of
$\frac1N\sum_{n\leq N}\delta_{T^nx}$, $N\geq1$, are often used to determine~\eqref{momoe1}; it is a natural playground where dynamics and number theory meet. One of the most motivating examples of the interplay between them is that the famous Chowla conjecture  for $\mob$~\footnote{ \label{f1} The Chowla conjecture says that $\frac1N\sum_{n\leq N}\mob^{j_0}(n)\mob^{j_1}(n+a_1)\ldots\mob^{j_r}(n+a_r)\tend{N}{\infty}0$ for each choice $1\leq a_1<\ldots<a_r$ and at least one $j_k$ odd; equivalently,  $\mob$ is a generic point for the so-called Sarnak's measure on $\{-1,0,1\}^{\N}$ (\cite{Ab-Ku-Le-Ru, Sa}).} implies the validity of Sarnak's conjecture, see \cite{Ab-Ku-Le-Ru,Sa, Ta} for more details. Due to a recent result of Tao \cite{Ta2}, we know that Sarnak's conjecture on its turn implies the logarithmic version of the Chowla conjecture.
In particular, Sarnak's conjecture implies that all admissible blocks do appear on $\mob$,\footnote{This observation has been
communicated to us by W.\ Veech.} cf.\ comments on page~9 in~\cite{Sa}.

\paragraph{Measure-theoretic viewpoint}
While often one focuses on proving the M\"obius disjointness for a particular class of zero entropy dynamical systems (see the bibliography in \cite{Ta2}), our approach is more abstract and concentrates on the measure-theoretic aspects. The starting point for us is the following:
\begin{Question}
Let $(X,T)$ be a topological dynamical system and suppose that $x\in X$ is a generic point for some measure $\kappa$. Can measure-theoretic properties of $(X,\cb(X),\mu,T)$ imply the validity of~\eqref{momoe1}?
\end{Question}
In particular, we ask:
\begin{Question}\label{q2}
Do some measure-theoretic properties of $(Z,\cd,\kappa,R)$ imply M\"obius disjointness of all its uniquely ergodic models?
\end{Question}
A positive answer to Question~\ref{q2} was provided in \cite{Ab-Le-Ru1}, where this line of research was initiated. The key notion there is so-called AOP property (see~\eqref{momoe4} below) which  yields M\"obius disjointness and is an isomorphism invariant. In particular, we have M\"obius disjointness in all uniquely ergodic models of:
\begin{itemize}
\item
(zero entropy) affine automorphisms on compact connected Abelian groups, \cite{Ab-Le-Ru1}, in particular, totally ergodic discrete spectrum automorphisms;
\item
(more generally) all uniquely ergodic models of unipotent diffeomorphisms on nilmanifolds, \cite{Fl-Fr-Ku-Le}.
\end{itemize}
In an unpublished (earlier) version of the present article it was shown how to use the recent remarkable results on the average behavior of non-pretensious multiplicative functions on, so-called, short intervals \cite{Ma-Ra,Ma-Ra-Ta}, to see that all uniquely ergodic models of
\begin{itemize}
\item
finite rotations
\end{itemize}
are M\"obius disjoint. Moreover, the M\"obius disjointness of all uniquely ergodic models of an
\begin{itemize}
\item
arbitrary discrete spectrum automorphism
\end{itemize}
was proved in~\cite{Hu-Wa-Zh}. On the other hand, it is still unknown whether all uniquely ergodic models of horocycle flows are M\"obius disjoint.

Following the above lines, the following natural question emerges:
%  of whether given an arbitrary ergodic system $(Z,\cd,\kappa,R)$:
\begin{Question}\label{pytanie}
Does M\"obius disjointness in a certain uniquely ergodic model of an ergodic system yield M\"obius disjointness in all its uniquely ergodic models?
\end{Question}
%\beq\label{pytanie}
%\begin{array}{l}
%\mbox{whenever M\"obius disjointness holds in a certain uniquely ergodic} \\ \mbox{model of $(Z,\cd,\kappa,R)$,
%then M\"obius disjointness holds}\\ \mbox{for all uniquely ergodic models of $(Z,\cd,\kappa, R)$?}\end{array}\eeq
Clearly, in the zero entropy case, the potential positive answer
to this question is supported by Sarnak's conjecture. Our main result -- \ref{thmAPm} -- yields an ``almost'' positive answer: the existence of one ``good'' model of a given system implies M\"obius disjointness of all its uniquely ergodic models.

\paragraph{Key properties}

Before providing more details, we need some preparation. Let $\bfu\colon\N\to\C$ be an arbitrary arithmetic function. In the applications, we will often take $\bfu=\mob$ which justifies the terminology, but the results are valid in general and hence we formulate them in this more abstract setting.

%One might think of $\bfu$ as being e.g.\ the M\"obius function, which justifies the terminology of the following definitions, but
%as our main results are valid independently of the nature of the sequence, we formulate them in this more abstract setting.

%%%%

\begin{Def}[Sarnak property]\em
  A point $x\in X$ satisfies the \emph{Sarnak property} [relatively to $\bfu$] if, for any $f\in C(X)$,
  \begin{equation}
    \label{eq:defSarnak}
    \frac{1}{N} \sum_{n<N} f(T^nx)\bfu(n) \tend{N}{\infty} 0.
  \end{equation}
  We say that $(X,T)$ satisfies the Sarnak property [relatively to $\bfu$] if any point $x\in X$ satisfies the Sarnak property [relatively to $\bfu$]. Sarnak property of $(X,T)$ relatively to $\mob$ (or $\lio$)\footnote{$\lio$ stands for the Liouville function, that is, $\lio(n)=(-1)^{\Omega(n)}$, where $\Omega(n)$ denotes the number of prime divisors of $n$ counted with multiplicities. Note that $\lio(n)=\mob(n)$ for all $n$ which are square-free.}  is called M\"obius (or Liouville) disjointness.
\end{Def}

%%%
%The following notion of MOMO was implicitly used in \cite{Ab-Le-Ru1}. It is similar to %~\eqref{momoe1} Sarnak property, but

\begin{Def}[MOMO property: M\"obius Orthogonality on Moving Orbits, implicit in~\cite{Ab-Le-Ru1}]
\label{def:MOMO}
\em
We say that $(X,T)$ satisfies the \emph{MOMO property} [relatively to $\bfu$] if, for any increasing sequence of integers
$0=b_0<b_1<b_2<\cdots$ with $b_{k+1}-b_k\to\infty$, for any sequence $(x_k)$ of points in $X$, and any $f\in C(X)$,
\begin{equation}
    \label{eq:defMOMO}
    \frac{1}{b_{K}} \sum_{k< K} \sum_{b_k\le n<b_{k+1}} f(T^{n-b_k}x_k) \bfu(n) \tend{K}{\infty} 0.
  \end{equation}
\end{Def}
Note that the MOMO property is similar to Sarnak property, but the orbit can be changed from time to time, less and less often.

\begin{Def}[strong MOMO property]\label{def:SM}
\label{def:strongMOMO}\em
We say that $(X,T)$ satisfies the \emph{strong MOMO property} [relatively to $\bfu$] if, for any increasing sequence of integers
$0=b_0<b_1<b_2<\cdots$ with $b_{k+1}-b_k\to\infty$, for any sequence $(x_k)$ of points in $X$, and any $f\in C(X)$,
  \begin{equation}
    \label{eq:defMOMOSI}
    \frac{1}{b_{K}} \sum_{k< K} \left|\sum_{b_k\le n<b_{k+1}} f(T^{n-b_k}x_k) \bfu(n)\right| \tend{K}{\infty} 0.
  \end{equation}
\end{Def}
It follows directly from the definition that the strong MOMO property implies uniform convergence in~\eqref{eq:defSarnak}.\footnote{\label{stopa}Notice that if for a system $(X,T)$ and $f\in C(X)$ we do not have uniform convergence of the sums $\frac1N\sum_{n\leq N}f(T^n\cdot)\bfu(n)$, then there exists $\vep_0>0$ such that for each $k\geq1$, we can find $b_k\geq k$ and $x_k\in X$ for which $\left|\frac1{b_k}\sum_{n\leq b_k}f(T^nx_k)\bfu(n)\right|\geq\vep_0$. We can assume that the distances $b_k-b_{k-1}$ grow very rapidly so that we obtain
$\frac{1}{b_K}\sum_{k<K}(b_k-b_{k-1})\left|
\frac1{b_k-b_{k-1}}\sum_{b_{k-1}\leq n<b_k}f(T^nx_k)\bfu(n)\right|
\geq\sum_{k<K}\frac{b_k-b_{k-1}}{b_K}\vep_0/2\geq\vep_0/3,$
whence the strong MOMO property (relative to $\bfu$) fails.} Moreover, by taking $f=1$, it follows that the strong MOMO property implies the following:
%By taking $f=1$ in Definition~\ref{def:SM}, we obtain that whenever strong MOMO holds, $\bfu$ has to satisfy:
\beq\label{eq:Mobius-like}
 \frac{1}{b_{K}} \sum_{k< K} \left|\sum_{b_k\le n<b_{k+1}} \bfu(n)\right| \tend{K}{\infty} 0\eeq
 for every sequence
$0=b_0<b_1<b_2<\cdots$ with $b_{k+1}-b_k\to\infty$.\footnote{We recall that by a result of Matomaki and Radziwi\l \l \ \cite{Ma-Ra} it follows that $\mob$ satisfies~\eqref{eq:Mobius-like}.}
In particular, $\frac1N\sum_{n<N}\bfu(n)\tend{N}{\infty}0$.

Note that, if we additionally assume that $\bfu$ is {\em bounded}, then the Sarnak property, the MOMO property %(Definition~\ref{def:MOMO})
and the strong MOMO property %(Definition~\ref{def:strongMOMO})
remain unchanged if we modify $\bfu$ on a subset of $\N$ of density 0. Note also that if $\bfu$ is bounded, by unique ergodicity,
to verify the MOMO or the strong MOMO property we only need to check the relevant convergence for a linearly $L^1$-dense set of continuous functions.

Finally, we clearly have
$$
\text{strong MOMO property} \Rightarrow \text{MOMO property}\Rightarrow \text{Sarnak property}
$$
(cf.\ Corollary~\ref{cor:12} below).

\paragraph{Main result}
By $M(X,T)$ we denote the set of $T$-invariant Borel probability measures, and $M^e(X,T)$ its subset of ergodic measures. If $\mu_1,\ldots,\mu_t\in M(X,T)$  then by ${\rm conv}(\mu_1,\ldots,\mu_t)$, we denote the corresponding  convex envelope. Recall that $\text{Q-gen}(x)$ is the set of measures in $M(X,T)$ for which $x\in X$ is quasi-generic.

We consider now an ergodic measure-theoretic dynamical system $(Z,\cd, \kappa,R)$, and we introduce three more properties (still relatively to $\bfu$), involving the above.

\setpropertytag{P1}
\begin{property}\label{newP1}
  There exists a topological dynamical system $(Y,S)$, and an $S$-invariant probability measure $\nu$ on $Y$, such that
  \begin{itemize}
    \item $(Y,S)$ satisfies the strong MOMO property,
    \item $(Y,\cb(Y),\nu,S)$ is measure-theoretically isomorphic to $(Z,\cd,\kappa,R)$.
  \end{itemize}
\end{property}

\setpropertytag{P2}
\begin{property}\label{newP2}
  For any topological dynamical system $(X,T)$ and any $x\in X$, if there exists a finite number of $T$-invariant measures $\mu_j$, $1\le j\le t$, such that
  \begin{itemize}
    \item for each $j$, $(X,\cb(X),\mu_j,T)$ is measure-theoretically isomorphic to $(Z,\cd,\kappa,R)$,
    \item  any measure for which $x$ is quasi-generic is a convex combination of the measures $\mu_j$, i.e.\ $\text{Q-gen}(x)\subset{\rm conv}(\mu_1,\ldots,\mu_t)$,
  \end{itemize}
  then
%  $\frac1N\sum_{n\leq N}f(T^nx)\bfu(n)\tend{N}{\infty}0$ for each $f\in C(X)$.
$x$ satisfies the Sarnak property. \label{p2}
\end{property}

%(Observe that \ref{newP2} implies that any uniquely ergodic model %of $(Z,\cd,\kappa,R)$ satisfies the Sarnak property.)

\setpropertytag{P3}
\begin{property}\label{P3}
  Any uniquely ergodic model $(Y,S)$ of $(Z,\cd,\kappa,R)$ satisfies the strong MOMO property.
\end{property}

\begin{named}{Main Theorem}{}
\label{thmAPm}
  Properties~\ref{newP1}, \ref{newP2} and \ref{P3} are equivalent.
\end{named}
We observe that, since there always exists a uniquely ergodic model of $(Z,\cd,\kappa,R)$ by the Jewett-Krieger theorem, the implication \ref{P3} $\Longrightarrow$ \ref{newP1} is obvious. The two other implications are treated in separate subsections.
While the proof of the implication \ref{newP2}$\Rightarrow$\ref{P3} uses some ideas from \cite{Ab-Le-Ru1}, the proof of the implication \ref{newP1}$\Rightarrow$\ref{newP2} heavily depends on the main ideas from \cite{Hu-Wa-Zh}.

\paragraph{AOP and Property~\ref{P3}}
Recall  that an ergodic automorphism $R$ is said to have AOP (asymptotically orthogonal powers) \cite{Ab-Le-Ru1} if for each $f,g\in L^2_0(Z,\cd,\kappa)$, we have
\beq\label{momoe4}
\lim_{\mathscr{P}\ni r,s\to\infty, r\neq s} \sup_{\kappa\in J^e(R^r,R^s)}\left|\int_{X\times X} f\ot g\,d\kappa\right|=0.\footnote{$\mathscr{P}$ stands for the set of prime numbers. The set $J^e(R^r,R^s)$ consists of $R^r\times R^s$-invariant measures on $Z\times Z$ which are ergodic and whose both projections on $Z$ are $\kappa$.}
\eeq
The AOP property implies zero entropy.
%\noteTh{I think the following should appear in the text, not in a footnote}
It also implies total ergodicity. Indeed, as clearly AOP is closed under taking factors, we merely need to notice that $Rx=x+1$ acting on $\Z/k\Z$ with $k\geq2$ has no AOP property. The latter easily follows from the Dirichlet theorem on primes in arithmetic progressions.

Clearly, if the powers of $R$ are pairwise disjoint\footnote{This is a ``typical'' property of an automorphism of a probability standard Borel space~\cite{Ju}.} in the Furstenberg sense \cite{Fu} then $R$ enjoys the AOP property. The
AOP property  of $(Z,\cd,\kappa,R)$ implies the MOMO property in every uniquely ergodic model of $R$ \cite{Ab-Le-Ru1} relatively to a multiplicative\footnote{ Multiplicativity means that $\bfu(1)=1$ and
$\bfu(mn)=\bfu(m)\bfu(n)$ whenever $m$ and $n$ are relatively prime.} $\bfu\colon\N\to\C$, $|\bfu|\leq1$, satisfying $\frac1N\sum_{n\leq N}\bfu(n)\to 0$ when $N\to\infty$. In particular, the MOMO property relatively to $\mob$ (or $\lio$) holds.
We will show that AOP implies Property~\ref{P3} (see Section~\ref{sec:aop2P1}), which results in the following.

\begin{Th}\label{thmB}
Let $\bfu\colon\N\to\C$ be  multiplicative, $|\bfu|\leq1$. Suppose that $(Z,\cd,\kappa,R)$ satisfies AOP. Then the following are equivalent:
\begin{itemize}
\item
$\bfu$ satisfies \eqref{eq:Mobius-like};
\item
The strong MOMO property relatively to $\bfu$ is satisfied in each uniquely ergodic model $(X,T)$ of $R$.
\end{itemize}
In particular, if the above holds, for each $f\in C(X)$, we have
\[ \frac1N\sum_{n\leq N}f(T^nx)\bfu(n)\to 0\] uniformly in $x\in X$.
\end{Th}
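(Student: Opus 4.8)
The plan is to establish the two implications of the equivalence separately and then read off the ``in particular'' clause. The implication (strong MOMO in every model) $\Rightarrow$ \eqref{eq:Mobius-like} is the cheap half: by the Jewett--Krieger theorem $R$ has at least one uniquely ergodic model $(X,T)$, and applying the strong MOMO property of that model to the constant function $f\equiv1$ turns the inner sums of \eqref{eq:defMOMOSI} into $\sum_{b_k\le n<b_{k+1}}\bfu(n)$, so \eqref{eq:defMOMOSI} becomes exactly \eqref{eq:Mobius-like} (and this does not involve the dynamics at all). The ``in particular'' statement is then immediate from the general fact, recorded just after Definition~\ref{def:strongMOMO}, that the strong MOMO property forces the convergence in \eqref{eq:defSarnak} to be uniform in $x$. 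Everything therefore reduces to the converse: under AOP, \eqref{eq:Mobius-like} yields the strong MOMO property in every uniquely ergodic model (this is the substance behind the assertion that AOP gives Property~\ref{P3}).

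For that converse, fix a uniquely ergodic model $(X,T)$ of $R$ with invariant measure $\mu$, a sequence $0=b_0<b_1<\cdots$ with $b_{k+1}-b_k\to\infty$, points $(x_k)$ and $f\in C(X)$; set $A_k:=\sum_{b_k\le n<b_{k+1}}f(T^{n-b_k}x_k)\bfu(n)$. Splitting $f=f_0+c$ with $c=\int f\,d\mu$ and $\int f_0\,d\mu=0$, the triangle inequality together with \eqref{eq:Mobius-like} reduces the claim to mean-zero $f$ -- this is the only place where \eqref{eq:Mobius-like} is used, while for mean-zero $f$ only AOP will enter. To handle the absolute values I would freeze a block-constant phase: put $\omega_k:=\overline{A_k}/|A_k|$ (and $\omega_k:=1$ if $A_k=0$) and define the single bounded sequence $a_n:=\omega_{k(n)}f(T^{\,n-b_{k(n)}}x_{k(n)})$, where $k(n)$ is the block index of $n$. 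Since $\omega_kA_k=|A_k|$,
\[
\frac{1}{b_K}\sum_{k<K}|A_k|=\frac{1}{b_{K}}\sum_{n<b_K}a_n\,\bfu(n),
\]
so it suffices to prove $\tfrac1N\sum_{n<N}a_n\bfu(n)\to0$ and specialise $N=b_K$. As $\bfu$ is multiplicative with $|\bfu|\le1$, the K\'atai--Bourgain--Sarnak--Ziegler criterion reduces this, in turn, to controlling the correlations $\tfrac1N\sum_{n\le N}a_{pn}\overline{a_{qn}}$ for large distinct primes $p\neq q$.

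Here AOP enters. Because AOP implies total ergodicity and $(X,T)$ is uniquely ergodic, each power $(X,T^p)$ is again uniquely ergodic, so Birkhoff averages of continuous functions over arbitrarily positioned long intervals converge to $\int\cdot\,d\mu$ uniformly in the base point. A standard compactness argument then identifies every weak-$*$ limit of the $T^p\times T^q$-orbit averages as a joining of $(X,\mu,T^p)$ and $(X,\mu,T^q)$; decomposing such a joining into ergodic components (which, by unique ergodicity of the powers, are again elements of $J^e(R^p,R^q)$) and invoking AOP, I obtain, with $\varepsilon(p,q):=\sup_{\theta\in J^e(R^p,R^q)}|\int f\otimes\bar f\,d\theta|$,
\[
\Big|\frac1{|I|}\sum_{n\in I}f(T^{pn}z_1)\overline{f(T^{qn}z_2)}\Big|\le\varepsilon(p,q)+\delta
\]
uniformly over $z_1,z_2\in X$ and over intervals $I$ with $|I|\ge L(p,q,\delta)$, where $\varepsilon(p,q)\to0$ as $p,q\to\infty$ through distinct primes. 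I would then split $\sum_{n\le N}a_{pn}\overline{a_{qn}}$ according to the pair $(k(pn),k(qn))$: on each piece the factor $\omega_{k(pn)}\overline{\omega_{k(qn)}}$ has modulus $1$ and factors out, leaving an orbit correlation over an interval. The long pieces contribute at most $(\varepsilon(p,q)+\delta)$ times their total length $\le N$, while the short pieces, being $o(N)$ in number since $b_{k+1}-b_k\to\infty$, contribute $o(N)$. Hence $\limsup_N|\tfrac1N\sum_{n\le N}a_{pn}\overline{a_{qn}}|\le\varepsilon(p,q)$, and feeding this back into the criterion gives $\tfrac1N\sum_{n<N}a_n\bfu(n)\to0$, as required.

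The step I expect to be the crux is the interaction between the absolute values and the moving orbits. The phases $\omega_k$ are adversarial and do not cancel pairwise, so they must be neutralised \emph{inside} the correlation $\tfrac1N\sum a_{pn}\overline{a_{qn}}$; this succeeds only because the correlation bound coming from AOP is uniform in the base points and in the position of the averaging interval, which is precisely what lets it survive the block-by-block decomposition. Securing that uniformity -- and, relatedly, ensuring that the ergodic components of the limiting joinings are themselves joinings in $J^e(R^p,R^q)$ -- is why total ergodicity (equivalently, unique ergodicity of all powers $T^p$) is needed rather than plain ergodicity, and it is the genuinely delicate part of the argument.
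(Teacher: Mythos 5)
Your proof is correct, and for the substantive implication it takes a genuinely different route from the paper. The paper factors the hard direction through its Property~\ref{newP2*}: it first proves AOP $\Rightarrow$ \ref{newP2*} (KBSZ applied to $f(T^nx)$ for a quasi-generic point, with a joining analysis in which the marginals of the limit measure are only \emph{absolutely continuous} with respect to convex combinations of the $\mu_j$), and then \ref{newP2*} $+$ \eqref{eq:Mobius-like} $\Rightarrow$ \ref{P3} by the construction from the proof of \ref{newP2} $\Rightarrow$ \ref{P3}: the blocks and phases are encoded into a single point of the auxiliary subshift $(Y\times\A)^\N$, where the phases $e_k$ are forced to be third roots of unity so that the alphabet stays compact, absolute values are recovered via the cone Lemma~\ref{lemma:cone}, and the quasi-generic measures of the encoded point are identified (using disjointness of ergodic systems from the identity) as convex combinations of three isomorphic copies of $\nu$. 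You bypass this topological packaging entirely: since you apply KBSZ to a numerical sequence $(a_n)$ rather than to observables of a topological system, you may freeze the \emph{exact} unimodular phases $\omega_k=\overline{A_k}/|A_k|$, so neither the sequence-space construction nor Lemma~\ref{lemma:cone} is needed. The price is that you must supply the correlation input yourself, namely the bound on $\frac{1}{|I|}\sum_{n\in I}f(T^{pn}z_1)\overline{f(T^{qn}z_2)}$ uniform in $z_1,z_2$ and in the position of $I$; your compactness argument for it is sound, and you correctly identify that it rests on unique ergodicity of the powers $T^p$ (hence on total ergodicity, which AOP provides), which pins both marginals of any limit empirical measure to $\mu$ and makes a.e.\ ergodic component an element of $J^e(R^p,R^q)$ after transport by the isomorphism. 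Your block decomposition also checks out: the sets where $(k(pn),k(qn))$ is constant are intervals, their number is $o(N)$ for fixed $p,q$ since $b_{k+1}-b_k\to\infty$, so pieces shorter than $L(p,q,\delta)$ contribute $o(N)$. What the paper's detour buys in exchange for its extra machinery is modularity and generality: Property~\ref{newP2*} handles quasi-generic points in systems that are not uniquely ergodic (with $\text{Q-gen}(x)$ in a finite convex hull), and the \ref{newP1}/\ref{newP2}/\ref{P3} framework is reused throughout the paper (the Main Theorem, Corollary~\ref{cor:12}, the positive-entropy corollaries, Theorem~\ref{t:SMext}); your argument, while more direct and self-contained for Theorem~\ref{thmB} itself, is specific to uniquely ergodic models. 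The easy implication via $f\equiv 1$ and the uniform-convergence consequence are handled exactly as in the paper.
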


\begin{Cor}\label{corC} Assume that $(Z,\cd,\kappa,R)$ enjoys the AOP property. Then in each uniquely ergodic model $(X,T)$ of $R$, we have
\beq\label{momoe5}
\frac1M\sum_{M\leq m<2M}\left|\frac1H\sum_{m \leq h<m+H}f(T^hx)\mob(h)\right|\to0\text{ when }H\to\infty,H/M\to0\eeq
for all $f\in C(X)$, $x\in X$.\end{Cor}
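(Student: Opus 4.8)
The plan is to deduce the statement from the strong MOMO property via Theorem~\ref{thmB}, and then to extract the overlapping short-interval average \eqref{momoe5} from strong MOMO by a pigeonhole-and-diagonalization argument. First I would reduce to strong MOMO: since $\mob$ is multiplicative with $|\mob|\le1$ and, by the Matom\"aki--Radziwi\l\l\ theorem \cite{Ma-Ra}, satisfies \eqref{eq:Mobius-like}, the AOP assumption on $(Z,\cd,\kappa,R)$ together with Theorem~\ref{thmB} gives that every uniquely ergodic model $(X,T)$ of $R$ has the strong MOMO property relatively to $\mob$. It then suffices to prove that the strong MOMO property alone implies \eqref{momoe5} for a fixed $f\in C(X)$ and $x\in X$; write $S(M,H)$ for the left-hand side of \eqref{momoe5}.

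The key local observation is that an overlapping short-interval average is an average of disjoint-block averages. For fixed $M$ and $H$, grouping the starting points $m\in[M,2M)$ according to their residue $r$ modulo $H$, the windows $[m,m+H)$ with $m\equiv r \pmod H$ are pairwise disjoint and adjacent; hence $S(M,H)$ equals the average over $r\in\{0,\dots,H-1\}$ of the corresponding disjoint-block averages. In particular, if $S(M,H)\ge\vep_0$, then by pigeonhole there is a residue $r^\ast=r^\ast(M,H)$ for which
\[
\sum_{\substack{M\le m<2M\\ m\equiv r^\ast \pmod H}}\Bigl|\sum_{m\le h<m+H} f(T^hx)\mob(h)\Bigr|\ \ge\ \vep_0\,M .
\]

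Finally I would argue by contradiction, assembling a single strong MOMO configuration out of many scales. Suppose \eqref{momoe5} fails: there are $\vep_0>0$ and sequences $H_i\to\infty$, $H_i/M_i\to0$ with $S(M_i,H_i)\ge\vep_0$; passing to a subsequence I may assume $M_{i+1}\gg 2M_i$, so that the ranges $[M_i,2M_i)$ are disjoint and widely separated. For each $i$ choose $r_i^\ast$ as above, producing disjoint length-$H_i$ blocks inside $[M_i,2M_i)$ whose total (unnormalised) mass is $\ge\vep_0 M_i$. Concatenating these blocks across all $i$, together with arbitrary filler blocks of growing length covering the gaps, I obtain a single increasing sequence $0=b_0<b_1<\cdots$ tiling $\N$; since each scale contributes only finitely many blocks and $H_i\to\infty$, the lengths satisfy $b_{k+1}-b_k\to\infty$. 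Setting $x_k:=T^{b_k}x$ turns the moving orbit into the genuine one, $f(T^{n-b_k}x_k)=f(T^nx)$. Evaluating the strong MOMO average \eqref{eq:defMOMOSI} at the index $K_i$ marking the end of scale $i$, where $b_{K_i}\le 2M_i(1+o(1))$, the scale-$i$ blocks alone force this average to be at least $\vep_0/3$ for all large $i$, contradicting the strong MOMO property and thus proving \eqref{momoe5}.

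The main obstacle is precisely this last deduction, strong MOMO $\Rightarrow$ \eqref{momoe5}: one must reconcile the disjoint, growing-length blocks demanded by strong MOMO with the fixed-length ($H$), overlapping windows of the short-interval average, and must realise the genuine orbit $T^hx$ rather than a moving one. The residue-class reduction disposes of the overlap, the choice $x_k=T^{b_k}x$ disposes of the moving orbit, and the across-scales diagonalization (using $M_{i+1}\gg M_i$ so that each scale dominates the normalisation at its own endpoint) converts a fixed-$H$ failure into a genuine strong MOMO failure; the remaining points (boundary and filler blocks, the passage to a subsequence) are routine bookkeeping.
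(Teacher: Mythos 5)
Your proposal is correct and follows essentially the same route as the paper: Corollary~\ref{corC} is obtained from Theorem~\ref{thmB} (AOP plus the Matom\"aki--Radziwi\l\l\ input that $\mob$ satisfies~\eqref{eq:Mobius-like} yields the strong MOMO property in every uniquely ergodic model), with the passage from strong MOMO to the short-interval statement~\eqref{momoe5} left implicit in the paper. Your residue-class decomposition modulo $H$ (turning overlapping windows into disjoint adjacent blocks), the choice $x_k=T^{b_k}x$ to realise the genuine orbit, and the across-scales diagonalization with $b_{K_i}\le 2M_i(1+o(1))$ constitute a correct and careful write-up of exactly that implicit step.
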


Note that even if we consider $(Z,\cd,\kappa,R)$ with the property that all of its powers are disjoint, the validity of~\eqref{momoe5} in each uniquely ergodic model $(X,T)$ of $R$ is a new result.\footnote{M\"obius disjointness for this case is already noticed in \cite{Bo-Sa-Zi}.}

\paragraph{Consequences: zero entropy}

The proof of the \ref{thmAPm} implies that we have:
\begin{Cor}\label{cor:12}
The following are equivalent:
\begin{itemize}
\item
Sarnak's property holds for all systems of zero topological entropy,
\item
strong MOMO property holds for all systems of zero topological entropy.
\end{itemize}
In particular, Sarnak's conjecture holds if and only if the strong MOMO property [relatively to $\mob$] holds for all systems of zero topological entropy.
\end{Cor}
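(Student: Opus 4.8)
The forward direction requires no new idea: since one has the implications $\text{strong MOMO}\Rightarrow\text{MOMO}\Rightarrow\text{Sarnak property}$ system by system, the assumption that the strong MOMO property holds for every zero‑entropy topological system immediately yields the Sarnak property for every zero‑entropy topological system. All the substance is in the converse, and the plan is to prove it by contradiction, running the mechanism behind the implication \ref{newP2}$\Rightarrow$\ref{P3} of \ref{thmAPm} with the universal Sarnak hypothesis supplying the vanishing input that \ref{newP2} otherwise provides.

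So suppose the Sarnak property (relatively to $\bfu$) holds for all systems of zero topological entropy, yet some zero‑entropy $(X,T)$ fails the strong MOMO property. Arguing exactly as in the footnote to Definition~\ref{def:strongMOMO}, I would extract $\vep_0>0$, an increasing sequence $0=b_0<b_1<\cdots$ with $b_{k+1}-b_k\to\infty$, points $x_k\in X$, and $f\in C(X)$ for which $\frac{1}{b_K}\sum_{k<K}|B_k|\ge\vep_0$ along a subsequence of $K$, where $B_k=\sum_{b_k\le n<b_{k+1}}f(T^{n-b_k}x_k)\bfu(n)$. Choosing unimodular scalars $c_k$ with $c_kB_k=|B_k|$ turns this into the linear average $\frac{1}{b_K}\sum_{n<b_K}c_{k(n)}f(T^{n-b_{k(n)}}x_{k(n)})\bfu(n)$, where $k(n)$ is the index of the block containing $n$. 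I would then assemble a single point $y$ by concatenating the orbit segments $(T^{n-b_k}x_k)_{b_k\le n<b_{k+1}}$ inside $X^{\Z}$, together with a companion coordinate taking, throughout block $k$, a value from a fixed finite discretization of the circle approximating $c_k$; the ambient system is the orbit closure $Y$ of $y$ in the product of $X^{\Z}$ with a finite‑alphabet shift.

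The key structural point is that $Y$ has zero topological entropy. Because $b_{k+1}-b_k\to\infty$, every limit of shifts of $y$ is a genuine $T$‑orbit (the transition times recede to infinity) carrying an asymptotically constant phase, so every invariant measure of $Y$ projects to an invariant measure of $(X,T)$ with a constant second coordinate; by the variational principle and the zero entropy of $(X,T)$, the system $Y$ has zero topological entropy. Defining the continuous function $g(w,c)=c\,f(w_0)$, the bad average is reproduced as the Sarnak average $\frac1N\sum_{n<N}g(\cdot)\bfu(n)$ evaluated at $y$ with $N=b_K$, which stays at least $\vep_0$ minus a discretization error that can be made arbitrarily small. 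This contradicts the hypothesis applied to the zero‑entropy system $Y$, completing the converse; specializing $\bfu=\mob$ (or $\lio$) gives the final assertion about Sarnak's conjecture.

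The main obstacle, and the only genuinely delicate part, is the treatment of the absolute values: one must absorb the unimodular corrections $c_k$ into an auxiliary finite‑alphabet coordinate so that $\sum_{k<K}|B_k|$ becomes a single linear correlation sum, while verifying that this auxiliary coordinate and the concatenation create no entropy, so that the resulting system remains inside the class to which the hypothesis applies. This is precisely the engine of the \ref{newP2}$\Rightarrow$\ref{P3} implication of \ref{thmAPm} (in the spirit of \cite{Ab-Le-Ru1}); I expect the entropy bookkeeping for the joint orbit closure $Y$, rather than the sign trick itself, to demand the most care.
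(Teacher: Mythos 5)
Your proposal is correct in substance and follows essentially the same route as the paper: the paper's proof of Corollary~\ref{cor:12} reuses the construction from the implication \ref{newP2}~$\Longrightarrow$~\ref{P3} — concatenate the orbit segments $(S^{n-b_k}y_k)_{b_k\le n<b_{k+1}}$ into a single point of $(Y\times\A)^\N$ with a constant phase coordinate on each block, show the orbit closure has zero topological entropy, and apply the Sarnak hypothesis to the function $g\bigl((w_0,a_0),\ldots\bigr)=a_0f(w_0)$. Two places where the paper is more careful than your sketch deserve mention. First, instead of exact unimodular scalars $c_k$ approximated by a finite discretization of the circle, the paper chooses $e_k$ among the \emph{three} third roots of unity so that $e_kB_k$ lies in the cone $\{0\}\cup\{z:\arg(z)\in[-\pi/3,\pi/3]\}$, and invokes Lemma~\ref{lemma:cone}(B) to get an \emph{exact} equivalence between $\frac1{b_K}\sum_k|B_k|\to0$ and $\frac1{b_K}\sum_ke_kB_k\to0$, with no error term; your additive ``discretization error'' is only controllable when $\bfu$ is bounded (fine for $\mob$ and $\lio$, but the corollary is stated for a general arithmetic $\bfu$), though your argument is easily repaired in general by the one-sided estimate $\mathrm{Re}(c_k'B_k)\ge\cos(\delta)\,|B_k|$, which is the cone lemma in disguise. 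Second, your claim that ``every limit of shifts of $y$ is a genuine $T$-orbit'' is false as stated: since a shifted window can straddle one block boundary at bounded distance, the orbit closure contains points with exactly one ``seam'' (two orbit pieces with a single phase switch), and the paper treats this case explicitly in the proof of Corollary~\ref{cor:12}. Your subsequent invariant-measure formulation is nonetheless correct — seam configurations carry no invariant mass, so every invariant measure on the orbit closure is supported on seamless constant-phase orbit sequences, is an image of a zero-entropy measure, and the variational principle closes the entropy bookkeeping exactly as in the paper.
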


By Corollary~\ref{cor:12} and by the first comment after Definition \ref{def:SM}, we obtain immediately:
\begin{Cor}\label{p:unif}
If Sarnak's conjecture is true then for all zero entropy systems $(X,T)$ and $f\in C(X)$, we have
$\frac1N\sum_{n\leq N}f(T^nx)\mob(n)\to0$
uniformly in $x\in X$.
\end{Cor}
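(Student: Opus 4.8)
The plan is to obtain the statement as a two-step deduction from results already in hand, rather than as a fresh argument. Assume Sarnak's conjecture. Then the last sentence of Corollary~\ref{cor:12} gives that the strong MOMO property relatively to $\mob$ holds in every system of zero topological entropy. Consequently it suffices to prove the general implication that, for a fixed zero-entropy system $(X,T)$ and $f\in C(X)$, the strong MOMO property forces $\frac1N\sum_{n\le N}f(T^nx)\mob(n)\to0$ uniformly in $x\in X$; this is exactly the first comment following Definition~\ref{def:SM}, which I would prove by contraposition.

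So suppose the convergence is not uniform. Then there is $\vep_0>0$ such that for every $M$ one can find $N\ge M$ and a point $z\in X$ with $\frac1N\bigl|\sum_{n\le N}f(T^nz)\mob(n)\bigr|\ge\vep_0$. Starting from $b_0=0$, I would select recursively integers $b_0<b_1<b_2<\cdots$ growing so rapidly that $b_k/b_{k+1}\to0$ (and hence $b_{k+1}-b_k\to\infty$), together with points $x_k\in X$, such that $\frac{1}{b_{k+1}}\bigl|\sum_{n\le b_{k+1}}f(T^nx_k)\mob(n)\bigr|\ge\vep_0$. For each $k$ I then take as moving base point $w_k:=T^{b_k}x_k$, so that $f(T^{n-b_k}w_k)=f(T^nx_k)$ and the strong MOMO block over $[b_k,b_{k+1})$ reproduces the tail of the full sum for $x_k$. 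Since $|\mob|\le1$, the discarded initial segment satisfies $\bigl|\sum_{n\le b_k}f(T^nx_k)\mob(n)\bigr|\le b_k\|f\|_\infty$, whence
\[
\Bigl|\sum_{b_k\le n<b_{k+1}}f(T^{n-b_k}w_k)\,\mob(n)\Bigr|\;\ge\;\vep_0\,b_{k+1}-b_k\|f\|_\infty .
\]

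Feeding the sequence $(b_k)$ and the base points $(w_k)$ into the strong MOMO average then yields
\[
\frac{1}{b_K}\sum_{k<K}\Bigl|\sum_{b_k\le n<b_{k+1}}f(T^{n-b_k}w_k)\,\mob(n)\Bigr|
\;\ge\;\frac{\vep_0}{b_K}\sum_{k<K}b_{k+1}-\frac{\|f\|_\infty}{b_K}\sum_{k<K}b_k
\;\ge\;\vep_0-o(1),
\]
where the first term is $\ge\vep_0$ because its $k=K-1$ summand alone contributes $\vep_0\,b_K/b_K=\vep_0$, while the subtracted term tends to $0$ by the rapid growth of $(b_k)$. This contradicts \eqref{eq:defMOMOSI}, so non-uniform convergence is incompatible with strong MOMO; combined with the first step this proves the corollary. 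There is no genuine obstacle here: the only points needing a little care are the elementary telescoping estimate controlling the initial segment via $b_k/b_{k+1}\to0$, and the harmless re-indexing of the moving orbit (through $w_k=T^{b_k}x_k$) to cast $f(T^nx_k)$ in the form required by \eqref{eq:defMOMOSI}.
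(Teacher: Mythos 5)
Your proposal is correct and takes essentially the same route as the paper: the paper obtains Corollary~\ref{p:unif} precisely by combining Corollary~\ref{cor:12} (Sarnak's conjecture yields the strong MOMO property in all zero entropy systems) with the comment after Definition~\ref{def:SM} that strong MOMO implies uniform convergence in~\eqref{eq:defSarnak}. The contrapositive argument sketched in footnote~\ref{stopa} is the same rapidly-growing-blocks construction you give, including the harmless reindexing $f(T^{n-b_k}(T^{b_k}x_k))=f(T^nx_k)$ and the observation that the last block dominates the average, so your write-up merely makes the paper's footnote explicit.
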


\begin{Remark}\label{rk:more}
One can think of the assertion of Corollary~\ref{p:unif} (with $\mob$ replaced by a sequence $\bfu$) as of a statement on ``unique ergodicity’’. For example, if $(Y,S)$ is a topological system and $y\in Y$ is a generic point for a Bernoulli measure $\nu$ and if $g\in C(Y)$ has $\nu$-mean zero then for every zero topological  entropy system $(X,T)$ and every $f\in C(X)$ the averages $\frac1N\sum_{n\leq N} f(T^nx)g(S^ny)\to 0$ uniformly in $x\in X$ (here $\bfu(n)=g(S^ny))$. However, it is easy to prove uniform convergence here by repeating any of the classical proofs of the fact that in a uniquely ergodic system ergodic averages converge uniformly.\footnote{If $|\frac1{b_k}\sum_{n\leq b_k}f(T^nx_k)g(S^ny)|\geq \vep_0$ (cf.\ footnote~\ref{stopa}) then by considering $\frac1{b_k}\sum_{n\leq b_k}\delta_{(T^nx_k,S^n\bfu)}$, we can assume that it converges to a joining of a measure with zero entropy and $\nu$ (which is Bernoulli). Hence it is the product measure, and we easily obtain a contradiction with the fact
that $\int g\, d\nu=0)$.}
\end{Remark}

\paragraph{Consequences: positive entropy}
Let $\D$ stand for the unit disc, $D_L:=L\D$ for $L>0$ and $S$ for the shift in $\D_L^{\Z}$. Given $\bfu\in(\D_L)^{\Z}$, we denote by $X_{\bfu}$ the subshift generated by $\bfu$.

\begin{Cor}\label{p1j}
Fix $\kappa\in M^e((\D_L)^{\Z},S)$. Let $(X,T)$ be any uniquely ergodic model of $((\D_L)^{\Z},\kappa,S)$. Let  $\bfu\in(\D_L)^{\Z}$ be such that $\text{Q-gen}(\bfu)\subset{\rm conv}(\kappa_1,\ldots,\kappa_m)$, where $((\D_L)^{\Z},\kappa_j,S)$ for $j=1,\ldots,m$  is measure-theoretically isomorphic to $((\D_L)^{\Z},\kappa,S)$.  Assume that $\bfv\in(\D_L)^{\Z}$ and $\frac1N\sum_{n\leq N}\bfu(n)\ov{\bfv(n)}$ does not converge to zero, i.e., $\bfu$ and $\bfv$ correlate. Then the system $(X,T)$ does not satisfy the strong MOMO property (relatively to $\bfv$).
\end{Cor}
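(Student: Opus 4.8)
The plan is to argue by contradiction, invoking the \ref{thmAPm} with the role of the abstract arithmetic function played by $\bfv$ and with the ergodic system $(Z,\cd,\kappa,R)$ taken to be $((\D_L)^{\Z},\kappa,S)$. So suppose, contrary to the claim, that $(X,T)$ \emph{does} satisfy the strong MOMO property relatively to $\bfv$. The whole content of the corollary is then a formal consequence of the implication \ref{newP1}$\Rightarrow$\ref{newP2}, combined with one clever choice of test function; there is no independent analytic work to do.

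First I would observe that the contradiction hypothesis immediately yields Property~\ref{newP1} (relatively to $\bfv$): since $(X,T)$ is a uniquely ergodic model of $((\D_L)^{\Z},\kappa,S)$, its unique invariant measure $\mu$ satisfies $(X,\cb(X),\mu,T)\cong((\D_L)^{\Z},\kappa,S)$, and by assumption $(X,T)$ has the strong MOMO property relatively to $\bfv$. Thus $(Y,S):=(X,T)$ together with $\nu:=\mu$ witnesses~\ref{newP1}. By the \ref{thmAPm}, \ref{newP1} implies \ref{newP2}, so Property~\ref{newP2} (relatively to $\bfv$) holds for $((\D_L)^{\Z},\kappa,S)$. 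Next I would feed the point $\bfu$ into~\ref{newP2}, taking the ambient topological system to be the \emph{full} shift $((\D_L)^{\Z},S)$, the point to be $\bfu$ itself, and the finitely many invariant measures to be $\kappa_1,\ldots,\kappa_m$. The two hypotheses of~\ref{newP2} are precisely what is assumed in the statement: each $((\D_L)^{\Z},\kappa_j,S)$ is measure-theoretically isomorphic to $((\D_L)^{\Z},\kappa,S)$, and $\text{Q-gen}(\bfu)\subset{\rm conv}(\kappa_1,\ldots,\kappa_m)$. Hence~\ref{newP2} forces $\bfu$ to satisfy the Sarnak property relatively to $\bfv$, i.e.\ for every $f\in C((\D_L)^{\Z})$,
\[
\frac1N\sum_{n<N}f(S^n\bfu)\,\bfv(n)\tend{N}{\infty}0.
\]

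Finally I would specialize $f$ to (the complex conjugate of) the zeroth-coordinate projection, $f(w)=\ov{w(0)}$, which is continuous on the compact space $(\D_L)^{\Z}$. Since $(S^n\bfu)(0)=\bfu(n)$, we have $f(S^n\bfu)=\ov{\bfu(n)}$, so the displayed Sarnak convergence reads $\frac1N\sum_{n<N}\ov{\bfu(n)}\,\bfv(n)\to0$; taking complex conjugates (and using that the limit $0$ is real) gives $\frac1N\sum_{n<N}\bfu(n)\,\ov{\bfv(n)}\to0$. This contradicts the standing assumption that $\bfu$ and $\bfv$ correlate, and completes the argument. The only points demanding care are the bookkeeping around the two incompatible uses of the symbol $\bfu$ (a point of the shift in the corollary versus the abstract arithmetic function in the \ref{thmAPm}, here incarnated as $\bfv$), and the conjugation step needed to turn the correlation $\bfu\,\ov{\bfv}$ into the exact form produced by the Sarnak property. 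I do not expect a genuine obstacle: the substance resides entirely in the already-established equivalence of the \ref{thmAPm}, and this statement is its direct positive-entropy reading.
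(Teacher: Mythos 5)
Your proof is correct and follows essentially the same route as the paper's own argument: assume the strong MOMO property relatively to $\bfv$, observe that $(X,T)$ itself witnesses \ref{newP1}, invoke the \ref{thmAPm} to obtain \ref{newP2}, apply it to the full shift $((\D_L)^{\Z},S)$ with the point $\bfu$ and the measures $\kappa_1,\ldots,\kappa_m$, and test the resulting Sarnak property against the coordinate function $f(w)=\ov{w(0)}$. Your explicit handling of the conjugation step is in fact slightly more careful than the paper's one-line computation, which elides it.
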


%\begin{Cor}\label{p1j}
%Fix $\kappa\in M^e\left((\D_L)^{\Z},S\right)$, %$\kappa\neq\delta_{(\ldots0.00\ldots)}$. Let $(X,T)$ be any %uniquely ergodic model of $\left((\D_L)^{\Z},\kappa,S\right)$. Then %for any $\bfu\in(\D_L)^{\Z}$ for which %$\text{Q-gen}(\bfu)\subset{\rm conv}(\kappa_1,\ldots,\kappa_m)$, %where $\left((\D_L)^{\Z},\kappa_j,S\right)$ for $j=1,\ldots,m$  is %measure-theoretically isomorphic to %$\left((\D_L)^{\Z},\kappa,S\right)$, the system $(X,T)$ does not %satisfy the strong MOMO property (relative to $\bfu$).
%\end{Cor}

In particular, we can use Corollary~\ref{p1j} for $\bfv=\bfu$, assuming that $\kappa\neq\delta_{(\ldots0.00\ldots)}$.
Hence, see Section~\ref{s:momo} for details, if $(X,T)$ is fixed then all  points $\bfu$ as above are ``visible'' in $X$ in the following sense:
\begin{equation}\label{m1}
\parbox{.85\textwidth}{$\left(\exists \vep_0>0\right)\;\left(\exists f\in C(X)\right)\;\left(\exists (x_k)\subset X\right)$\\
$\left(\exists A=\bigcup_{k=1}^\infty[a_k,c_k)\subset\N\;\text{of disjoint intervals}, c_k-a_k\to\infty,\;\overline{d}(A)>0\right)$ such that
$\frac1{c_k-a_k}\left|\sum_{n=0}^{c_k-a_k-1}f(T^nx_k)\bfu(a_k+n)\right|\geq\vep_0$ for each $k\geq1$.}
\end{equation}

Recently, Downarowicz and Serafin \cite{Do-Se} constructed  positive entropy homeomorphisms of arbitrarily large entropy  which are M\"obius (or Liouville) disjoint. The following natural question arises:
\begin{Question}
Does there exist an ergodic positive entropy measure-theoretic system whose all uniquely ergodic models are M\"obius (or Liouville) disjoint?
\end{Question}
\noindent
A partial (negative) answer is given by the following two results:

\begin{Cor}\label{c11}
Assume that $\bfu\in (\D_L)^{\Z}$ is generic for a Bernoulli measure $\kappa$. Let $\bfv\in(\D_L)^\Z$, $\bfu$ and $\bfv$ correlate. Then for each dynamical system $(X,T)$ with $h(X,T)>h((\D_L)^{\Z},\kappa,S)$, we do not have the strong MOMO property relatively to $\bfv$.\end{Cor}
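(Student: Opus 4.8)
The plan is to exhibit explicitly the moving orbits witnessing the failure of the strong MOMO property in the form described by~\eqref{m1}: I will produce $\vep_0>0$, a single $f\in C(X)$, points $x_k\in X$, and a partition of $\N$ into consecutive blocks $[b_k,b_{k+1})$ of lengths tending to infinity, along which $f(T^{\cdot}x_k)$ correlates with $\bfv$. The guiding idea, in the spirit of Corollary~\ref{p1j} but with the entropy gap as the new input, is that since $\bfu$ correlates with $\bfv$ it suffices to make the orbit of $x_k$ \emph{imitate} a long block of $\ov\bfu$; the strict inequality $h(X,T)>h((\D_L)^{\Z},\kappa,S)$ is exactly what allows a Bernoulli system of smaller entropy to be copied into $X$.

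First I would fix the blocks. Since $\frac1N\sum_{n\le N}\bfu(n)\ov{\bfv(n)}$ does not tend to $0$, pick a subsequence with $\frac1{N_j}\sum_{n<N_j}\ov{\bfu(n)}\bfv(n)\to c\neq0$, then thin it out so rapidly that, setting $b_k=N_{j_k}$, a telescoping computation ($N_{j_{k+1}}\gg N_{j_k}$) gives the block averages $\frac1{b_{k+1}-b_k}\sum_{b_k\le n<b_{k+1}}\ov{\bfu(n)}\bfv(n)\to c$. These blocks are consecutive, hence of full density, with lengths $L_k:=b_{k+1}-b_k\to\infty$; moreover, being (up to a negligible prefix) initial segments of the $\kappa$-generic point $\bfu$, they carry the statistics of $\kappa$.

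The core step realizes $\ov\bfu$ along these blocks inside $X$. By the variational principle and affinity of entropy there is an ergodic $\mu\in M^e(X,T)$ with $h_\mu(T)>h(\kappa)$; as $\kappa$ is Bernoulli, Sinai's factor theorem yields a factor map $\pi\colon(X,\cb(X),\mu,T)\to((\D_L)^{\Z},\kappa,S)$. Let $f_0$ be its zeroth coordinate, so $\pi(x)=(f_0(T^nx))_n$, and choose $f\in C(X)$ with $\|f-\ov{f_0}\|_{L^1(\mu)}$ small. For each $k$ I would select a $\mu$-generic point $x$ and a time shift $s$ with $(f_0(T^{s+m}x))_{m<L_k}$ matching $\bfu|_{[b_k,b_{k+1})}$ in the averaged $L^1$ sense, and set $x_k=T^sx$; this makes $\frac1{L_k}\sum_{m<L_k}|f(T^mx_k)-\ov{\bfu(b_k+m)}|$ small. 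Hence $\frac1{L_k}\sum_{m<L_k}f(T^mx_k)\bfv(b_k+m)$ stays within $o(1)$ of the block average $\to c$, so each (large) block contributes at least $\vep_0:=|c|/2$ in modulus; since the $L_k$ sum to $b_K$, the quantity in~\eqref{eq:defMOMOSI} is bounded below by $\vep_0$, contradicting the strong MOMO property relatively to $\bfv$. (When $h(\kappa)=0$ the point $\bfu$ is essentially constant and $f$ may be taken constant, so no entropy is needed.)

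The main obstacle is precisely this block-matching: turning the \emph{measure-theoretic} Bernoulli factor into a \emph{pointwise}, position-by-position imitation of the concrete sequence $\bfu$. Two things must be reconciled. First, $f_0$ is only defined $\mu$-a.e.\ and merely measurable, so passing to a continuous $f$ must not spoil the match on the arbitrarily placed windows $[b_k,b_{k+1})$; this is handled by taking, for each $k$, a fresh $\mu$-generic base point, along which the ergodic averages of $|f-\ov{f_0}|$ genuinely approach $\|f-\ov{f_0}\|_{L^1(\mu)}$. Second, and more delicate, one must know that a $\kappa$-generic orbit $\pi(x)$ contains an $L^1$-approximate copy of the prescribed block $\bfu|_{[b_k,b_{k+1})}$ at some time. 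Here Bernoullicity is doubly used: Ornstein--Sinai theory delivers $\pi$ from the bare inequality $h_\mu(T)>h(\kappa)$, and it is again the i.i.d.\ structure that lets a single $\kappa$-generic orbit carry $\bar d$-approximate copies of all $\kappa$-typical blocks, the rapid growth of $(N_{j_k})$ ensuring that our blocks are of this type. It is this $\bar d$-approximation, rather than any formal manipulation, that carries the weight of the argument.
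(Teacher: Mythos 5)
There is a genuine gap at the core step of your argument, namely the passage from the \emph{measurable} Sinai factor map $\pi$ to the \emph{pointwise} imitation of the blocks $\bfu|_{[b_k,b_{k+1})}$ by $f(T^mx_k)$ with a single continuous $f$. Your matching event is exponentially rare: writing $E_k:=\{x:\frac1{L_k}\sum_{m<L_k}|f_0(T^mx)-\bfu(b_k+m)|<\eps\}$, one has $\mu(E_k)\to 0$ exponentially in $L_k$ (it is essentially the $\mu$-mass of a length-$L_k$ cylinder pulled back through $\pi$), whereas your control on the discrepancy between $f$ and $\ov{f_0}$ is only an $L^1(\mu)$ bound by a \emph{fixed} $\delta$ (the function $f$ must be fixed once and for all, since the strong MOMO property quantifies over a single $f\in C(X)$). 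Birkhoff along a fresh generic point controls the average of $|f-\ov{f_0}|$ over initial segments $[0,N)$, but says nothing about the specific window $[s,s+L_k)$ where the match occurs: the set of window-starts that are bad for $f\approx\ov{f_0}$ has density at most $\delta/\eps$, while the set of window-starts realizing the match has density $\mu(E_k)$, which for large $k$ is far smaller than $\delta/\eps$. So all matching windows may be bad, and nothing in your argument rules out that the rare event $E_k$ correlates heavily with the set where $f$ differs from $\ov{f_0}$ (recall $f_0$ is merely measurable, and a Lusin-set version of the same argument fails for the identical reason: a measure-$(1-\eps)$ set need not intersect an exponentially small one). Your own last paragraph correctly identifies this as the delicate point, but the proposed fix (fresh generic base points) does not address it.

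The paper circumvents exactly this obstruction with a tool absent from your proposal: Weiss's theorem on strictly ergodic models. After producing, as you do, an ergodic $\mu$ with $h_\mu(T)>h(\kappa)$ and a Sinai factor $(X,\mu,T)\to((\D_L)^{\Z},\kappa,S)$, the paper replaces both systems by strictly ergodic models $(X',T')\to(Y',S')$ with a \emph{continuous} equivariant factor map, so no $f$-versus-$f_0$ approximation on rare windows is ever needed. The strong MOMO property transfers from $(X,T)$ to $(X',T')$ by the \ref{thmAPm} (property \ref{newP1} holds with $(X,T)$ itself as the good model, hence \ref{P3} applies to the model $(X',T')$), passes trivially to the topological factor $(Y',S')$, and then Corollary~\ref{p1j} (with $\bfv$ and $\text{Q-gen}(\bfu)=\{\kappa\}$, $m=1$) yields the contradiction with the correlation of $\bfu$ and $\bfv$ --- in effect the hard window-bookkeeping you attempt by hand is delegated to the already-proved implication \ref{newP1}$\Rightarrow$\ref{newP2}, where it is handled via Lusin sets, the sets $B_j(L)$, and the quasi-genericity Lemma~\ref{lemma:density}, with the direction of approximation reversed (one approximates the given orbit by the good model, rather than hunting for a prescribed block inside a typical orbit). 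The peripheral parts of your proposal are sound: the telescoping choice of $b_k=N_{j_k}$ with $b_k/b_{k+1}\to0$ making block averages of $\ov{\bfu}\bfv$ tend to $c\neq0$, the variational principle plus Sinai step, and the observation that such blocks have empirical statistics converging to $\kappa$ so that a $\kappa$-generic orbit contains averaged-$L^1$ approximate copies of each fixed block with positive probability. But without the continuous factor map (or an equivalent device) the imitation step fails, and with it the construction of the witnesses $(x_k)$.
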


\begin{Cor}\label{dodENT}
Assume that the Chowla conjecture holds for $\lio$.\footnote{The Chowla conjecture is equivalent to saying that $\lio$ is a generic point for the Bernoulli measure $B(1/2,1/2)$ for the shift on $\{-1,1\}^{\N}$, cf.\ footnote~\ref{f1}.}  Then
no topological system $(X,T)$ with positive entropy satisfies the strong MOMO property relatively  to $\lio$.
\end{Cor}
 In fact, by the proof of the implication \ref{newP2}$\:\Rightarrow\:$\ref{P3}, it follows that whenever an ergodic  measure-theoretic system $(Z,{\cal D},\kappa,R)$ has positive entropy then  there must exist a topological system which has at most three ergodic measures, all yielding systems isomorphic to $R$, and for which the Sarnak property (relatively to $\lio$) does not hold. Corollary~\ref{cor:12}, Corollary~\ref{p:unif} and Corollary~\ref{dodENT} seem to yield a much better understanding of Sarnak's conjecture (at least for the Liouville function). It would be an interesting challenge to construct a uniquely ergodic model of the Bernoulli system $B(1/2,1/2)$ which has no strong MOMO property relatively to $\lio$.

\paragraph{Property~\ref{p2} and examples}
In the second part of the paper, we will concentrate on examples of automorphisms enjoying Property~\ref{p2} relatively to any bounded multiplicative function\emph{} $\bfu$ satisfying~\eqref{eq:Mobius-like}.  We will show a general criterion to lift the strong MOMO property to extensions and deal with coboundary extensions of homeomorphisms satisfying the strong MOMO property.
The validity of~\ref{p2} in a large subclass of so-called generalized Morse sequences \cite{Ke} then follows by exploiting the idea of lifting the strong MOMO property by some cocycle extensions (see Theorem~\ref{t:SMext} below). We will show in particular that Property \ref{p2} (hence also the strong MOMO property holds) holds for all uniquely ergodic models of: all unipotent diffeomorphisms on nilmanifolds, all transformations with discrete spectrum, typical automorphism of a probability standard Borel space, systems coming from bijective substitutions and some other ``close'' to that; in particular, the classical Thue-Morse and Rudin-Shapiro systems and, finally,  for systems determined by so-called Kakutani sequences. For the M\"obius disjointness  in the classes of systems listed above, see \cite{Bo,Bo-Sa-Zi,Da,Fl-Fr-Ku-Le,Gr,Mu,Ve}.

\paragraph{Strong MOMO property and examples}
We will now show what the strong MOMO property relatively to the Liouville function can mean in the case of Kakutani sequences.

  Using the sequence $1,2,2^2,\ldots$, each natural number $n\geq1$ can be written uniquely as $n=\sum_{j\geq0}\vep_j2^j$ with $\vep_j=0$ or~$1$. Then, we can consider
the sequence $s_2(n):=\sum_{j\geq 0}\vep_j$ mod~2, $n\geq1$. Using the sequence $2=p_1<p_2<p_3<\ldots$ of consecutive primes numbers, each natural number $n\geq2$ can be written uniquely as $n=\prod_{j\geq1}p_j^{\alpha_j}$ with $\alpha_j\geq0$, $j\geq1$. Then, we can consider the sequence $b(n):=\sum_{j\geq1}\alpha_j$ mod~2, $n\geq1$. Properties of $(s_2(n))$ and $(b(n))$ concern the additive and multiplicative structure of $\N$, respectively. Hence, we can expect some form of independence of the two sequences. Indeed, we will show that $(-1)^{s_2(n)}$ and $(-1)^{b(n)}$, $n\geq1$,  are uncorrelated in a strong way. Note that $(-1)^{s_2(n)}=(-1)^{x(n)}$, where $x\in\{0,1\}^{\N}$ is the classical Thue-Morse sequence, while $\lio(n):=(-1)^{b(n)}$, where $\lio$ is the classical Liouville function.\footnote{As $n\mapsto b(n)$ is completely additive, the Liouville function is a completely multiplicative function.} In fact, our result will be more general. Following B.\ Green \cite{Gr},  let $A\subset \N$. Consider
$x=x_A\in\{0,1\}^{\N}$ such that
$$
x(n)=\sum_{i\in A} \vep_i\;\mbox{mod}\;2,\;\mbox{where}\; n=\sum_{i\geq 0}\vep_i 2^i.$$
As explained in \cite{Ab-Ka-Le}, each $x=x_A$ is a Kakutani sequence (and each Kakutani sequence determines an $A$).\footnote{This observation is due to C.\ Mauduit.}
%\noteTh{I guess there is some condition on $A$, at least $|A|=\infty$?}

It follows from the strong MOMO property (with respect to $\lio$) of the system determined by $x_A$ that
$$
\frac1{b_{K}}\sum_{k<K}\left|\sum_{b_k\leq n<b_{k+1}}(-1)^{x_A(n)}\lio(n)\right|\tend{K}{\infty} 0.$$
Now, $\frac1N\sum_{n\leq N}(-1)^{x_A(n)}\to0$\footnote{This follows from the fact that for the unique invariant measure $\mu_{x_A}$ for the subshift determined by $x_A$, we have $\int (-1)^{z(0)}\,d\mu_{x_A}(z)=0$.} and $\frac1N\sum_{n\leq N}\lio(n)\to0$ (the latter is equivalent to the PNT).  More than that, the same property holds on each short interval for the first sequence (by the unique ergodicity of the system determined by a Kakutani sequence) or on a typical short interval for the Liouville function by a result of \cite{Ma-Ra}.
Recall that if we have two random variables $X,Y$ taking two values $\pm1$ with probability 1/2 then they are independent if and only if $\int XY=0$.
By all this, we have proved the following form of independence of the sequences %$(s_2(n))$
$(x_A(n))$ and $(b(n))$:

\begin{Prop}\label{propD} We have
$$\frac1M\sum_{M\leq m<2M}\left|\frac1H\sum_{m\leq h<m+H}(-1)^{x_A(h)}\lio(h)\right|\to0$$
when $H\to\infty$,  $H/M\to0$.
%\noteTh{I suggest the following for the second part of the proposition:}
% Moreover, on a ``typical'' short interval, we have
% $$
% \frac1H\left|\{m\leq h\leq m+H:\: (-1)^{x_A(h)}=e,\lio(h)=f\}\right|\to\frac14$$
% for each $e,f\in\{-1,1\}$.
Moreover,  for each $e,f\in\{-1,1\}$,
\[
  \frac1M\sum_{M\leq m<2M} \left|\frac1H\left|\{m\leq h< m+H:\: (-1)^{x_A(h)}=e,\lio(h)=f\}\right|-\frac14\right|\to0
\]
when $H\to\infty$,  $H/M\to0$.
\end{Prop}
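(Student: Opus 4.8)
The plan is to reduce both assertions to the behaviour of three sliding-window averages and to dispose of them one at a time. Throughout, let $(X_{x_A},S)$ be the subshift generated by $x_A$; it is uniquely ergodic, with unique invariant measure $\mu_{x_A}$, and I set $f(z)=(-1)^{z(0)}$, so that $f(S^hx_A)=(-1)^{x_A(h)}$ and $\int f\,d\mu_{x_A}=0$. For a window $[m,m+H)$ write
$$A_m=\frac1H\sum_{m\le h<m+H}(-1)^{x_A(h)},\qquad B_m=\frac1H\sum_{m\le h<m+H}\lio(h),\qquad C_m=\frac1H\sum_{m\le h<m+H}(-1)^{x_A(h)}\lio(h).$$

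\emph{The first assertion} is precisely $\frac1M\sum_{M\le m<2M}|C_m|\to0$ as $H\to\infty$, $H/M\to0$, i.e.\ the sliding-window reformulation of the strong MOMO property (relatively to $\lio$) of the system determined by $x_A$, established earlier for Kakutani sequences. Indeed, feeding into the strong MOMO property the function $f$ above and the moving points $x_k=S^{b_k}x_A$ yields exactly the block form recorded before the statement, namely $\frac1{b_K}\sum_{k<K}\big|\sum_{b_k\le n<b_{k+1}}(-1)^{x_A(n)}\lio(n)\big|\to0$. To pass from this block form to the sliding-window form I would use the offset-averaging identity
$$\frac1M\sum_{M\le m<2M}|C_m|=\frac1H\sum_{r=0}^{H-1}\Big(\frac{H}{M}\sum_{\substack{M\le m<2M\\ m\equiv M+r\,(\mathrm{mod}\,H)}}|C_m|\Big),$$
in which, for each fixed offset $r$, the inner average runs over consecutive, non-overlapping blocks of common length $H$ and is therefore (up to the boundary discrepancy between blocks contained in $[M,2M)$ and the full range, which is $O(H/M)\to0$) a block-form average to which the strong MOMO property applies. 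Averaging over $r$ preserves the bound. This is the same reformulation that underlies Corollary~\ref{corC}.

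\emph{The second assertion} then follows from the first together with the two marginal equidistribution statements. Fixing $e,f\in\{-1,1\}$ and using $\raz[(-1)^{x_A(h)}=e]=\tfrac12(1+e(-1)^{x_A(h)})$ and $\raz[\lio(h)=f]=\tfrac12(1+f\lio(h))$, a direct expansion gives
$$\frac1H\big|\{m\le h<m+H:\ (-1)^{x_A(h)}=e,\ \lio(h)=f\}\big|-\frac14=\frac14\big(eA_m+fB_m+ef\,C_m\big).$$
Since $|e|=|f|=1$, the triangle inequality and averaging over $m\in[M,2M)$ yield
$$\frac1M\sum_{M\le m<2M}\left|\frac1H\big|\{\cdots\}\big|-\frac14\right|\le\frac14\Big(\frac1M\sum_{M\le m<2M}|A_m|+\frac1M\sum_{M\le m<2M}|B_m|+\frac1M\sum_{M\le m<2M}|C_m|\Big).$$
The third average tends to $0$ by the first assertion. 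The first average tends to $0$ because unique ergodicity of $(X_{x_A},S)$ forces the window averages of $f$ to converge uniformly to $\int f\,d\mu_{x_A}=0$, whence $\sup_m|A_m|\to0$ as $H\to\infty$. The second average tends to $0$ as $H\to\infty$, $H/M\to0$ by the theorem of Matom\"aki and Radziwi\l\l\ \cite{Ma-Ra} applied to the Liouville function. Letting $H\to\infty$, $H/M\to0$ finishes the argument.

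\emph{The main obstacle} is the first assertion, namely the passage from the moving-orbit block form of strong MOMO to the sliding-window form over the dyadic range $[M,2M)$ with $H/M\to0$; this is where the absolute values inside the sum and the condition $b_{k+1}-b_k\to\infty$ are used in an essential way, and it is the only place requiring genuine analysis. Once that reformulation is in hand the second assertion is elementary, the only external inputs being unique ergodicity for the additive sequence $(-1)^{x_A(\cdot)}$ and Matom\"aki--Radziwi\l\l\ for the multiplicative sequence $\lio$.
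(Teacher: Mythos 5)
Your proof is correct and takes essentially the same route as the paper: strong MOMO of the Kakutani system relative to $\lio$ (Corollary~\ref{c:Kak+RS}) transferred to averaged short intervals, combined with the indicator expansion $\frac1H\left|\{\cdot\}\right|-\frac14=\frac14\left(eA_m+fB_m+ef\,C_m\right)$, where $A_m$ is killed by unique ergodicity and $B_m$ by Matom\"aki--Radziwi\l\l. The one compressed step --- upgrading strong MOMO, whose hypothesis requires $b_{k+1}-b_k\to\infty$, to families of constant-length-$H$ windows via a contradiction argument across scales $H\to\infty$ --- is left equally implicit in the paper (cf.\ Corollary~\ref{corC} and the discussion preceding the proposition), and you correctly identify it as the crux.
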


\section{MOMO property in models of an ergodic system}

\subsection{Proof of \ref{newP1} $\Longrightarrow$ \ref{newP2}}\label{sec:newP1_to_newP2}
This proof strongly relies on ideas borrowed from~\cite{Hu-Wa-Zh}. Let $(Y,S)$ and $\nu$ be given by \ref{newP1}, and let $(X,T)$, $\mu_1,\ldots,\mu_t$ and $x$ be as in the assumptions of \ref{newP2}. In particular, for each $1\le j\le t$,  $(X,\cb(X),\mu_j,T)$ is measure-theoretically isomorphic to $(Y,\cb(Y),\nu,S)$.
We also fix a continuous function $f$ on $X$, and $0<\varepsilon<\frac12$.

Since the measures $\mu_j$ are ergodic for $T$, we can find $T$-invariant disjoint Borel subsets $X_j$, $1\le j\le t$ with $\mu_j(X_j)=1$, and measure-theoretic isomorphisms $\phi_j\colon (X_j,\cb(X_j),\mu_j,T)\to (Y,\cb(Y),\nu,S)$.
For each $1\le j\le t$, Lusin's theorem now provides a compact subset $W_j\subset X_j$, with $\mu_j(W_j)>1-\varepsilon^4$, such that the restriction $\phi_j|_{W_j}$ is continuous. Then this restriction is in fact a homeomorphism between $W_j$ and its image $\phi_j(W_j)$, which is a compact subset of $Y$.
The function $f\circ \phi_j^{-1}$ is continuous on this compact subset, so by the Tietze extension theorem it can be extended to a continuous function $g_j$ on the entire space $Y$, with $\|g_j\|_\infty=\|f\|_\infty$. The following observation will be useful: for any $w\in X$ and any $s\ge0$,
\begin{equation}
  \label{eq:window}
  [w\in W_j \text{ and }T^sw\in W_j] \Longrightarrow f(T^sw)=g_j\left(\phi_j(T^sw)\right) = g_j\left(S^s(\phi_jw)\right).
\end{equation}
Informally, the preceding observation means that these compact sets $W_j$ can be used as ``windows" through which we can see the behaviour of the dynamical system $(Y,S)$. We would like to see this behaviour along long pieces of orbits, but these windows are not $T$-invariant. This is why    we need to define, for each $1\le j\le t$ and each integer $L\ge1$,  the following subset of $W_j$:
\[
  B_j(L):=\left\{w\in W_j: \frac{1}{L}\sum_{\ell<L} \ind{W_j}(T^\ell w) > 1-\varepsilon^2\right\}.
\]
Observe that $B_j(L)$ can be written as the finite union of all sets of the form $T^{-\ell_1}W_j\cap\cdots\cap T^{-\ell_r}W_j$, where $\{\ell_1,\ldots,\ell_r\}$ ranges over all finite subsets of $\{0,\ldots,L-1\}$ of cardinality larger than $1-\varepsilon$. Each $T^{-\ell}W_j$ being compact, $B_j(L)$ is compact.
On $W_j\setminus B_j(L)$, we have the inequality
\[
 \frac{1}{L}\sum_{\ell<L} \ind{X\setminus W_j}(T^\ell w) \ge \varepsilon^2.
\]
Therefore, we have
\[
   \varepsilon^2\mu_j\bigl(W_j\setminus B_j(L)\bigr)\le\int_X  \frac{1}{L}\sum_{ \ell<L} \ind{X\setminus W_j}\circ T^\ell \, d\mu_j = \mu_j(X\setminus W_j) <\varepsilon^4,
\]
and this yields $\mu_j(B_j(L))>1-\varepsilon$.

Let $d(\cdot,\cdot)$ be the distance on $X$. To each integer $L\ge1$, we also associate a positive number $\eta(L)$, small enough so that for any $w,w'\in X$,
\begin{equation}
  \label{eq:equicontinuity}
  d(w,w')<\eta(L)\Longrightarrow \forall 0\le n<L, |f(T^nw)-f(T^nw')| < \varepsilon.
\end{equation}

The following lemma is the only place where we use the assumption on the quasi-genericity of the point $x$.
\begin{Lemma}
  \label{lemma:density}
  For each $L\ge1$, let $B(L)$ be the disjoint union $B_1(L)\sqcup\cdots\sqcup B_t(L)$, which is also a compact subset of $X$.
  Then
  \[
  \limsup_{N\to\infty}\frac{1}{N}
  \#\Bigl\{ n\in\{0,\ldots,N-1\}: d\bigl(T^nx,B(L)\bigr)\ge \eta(L)\Bigr\} < \varepsilon.\]
\end{Lemma}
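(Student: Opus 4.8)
The plan is to deduce the estimate from weak-$*$ compactness of the space of invariant measures together with the quasi-genericity hypothesis; this is exactly the place where the assumption $\text{Q-gen}(x)\subset{\rm conv}(\mu_1,\ldots,\mu_t)$ of \ref{newP2} is used. Set $C:=\{w\in X: d(w,B(L))\ge\eta(L)\}$. Since $B(L)=B_1(L)\sqcup\cdots\sqcup B_t(L)$ is a finite union of compact sets, hence compact, the map $w\mapsto d(w,B(L))$ is continuous and $C$ is closed. The quantity to be bounded is precisely $\limsup_N \frac1N\sum_{n<N}\ind{C}(T^nx)$.

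First I would choose a subsequence $(N_k)$ realizing this $\limsup$, and then, using compactness of the space of Borel probability measures on the compact metric space $X$ in the weak-$*$ topology, pass to a further subsequence along which the empirical measures $\frac{1}{N_k}\sum_{n<N_k}\delta_{T^nx}$ converge weakly-$*$ to some measure $\mu$. Such a limit is automatically $T$-invariant and, by definition, lies in $\text{Q-gen}(x)$; the hypothesis of \ref{newP2} then forces $\mu=\sum_{j=1}^t c_j\mu_j$ with $c_j\ge0$ and $\sum_j c_j=1$. Because $C$ is closed, the Portmanteau theorem gives $\limsup_k \frac{1}{N_k}\sum_{n<N_k}\ind{C}(T^nx)\le\mu(C)$, so the $\limsup$ we want to estimate is at most $\mu(C)$.

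It then remains to show $\mu(C)<\varepsilon$, and here the decomposition of $\mu$ interacts cleanly with the structure of $B(L)$. Since $\eta(L)>0$, every point of $B(L)$ is at distance $0$ from $B(L)$, so $C\subset X\setminus B(L)$ and $\mu(C)\le 1-\mu(B(L))$. Each $\mu_j$ is carried by $X_j$, the sets $X_j$ are pairwise disjoint, and $B_i(L)\subset X_i$; hence $\mu_j(B(L))=\mu_j(B_j(L))>1-\varepsilon$ by the estimate $\mu_j(B_j(L))>1-\varepsilon$ established just before the lemma. Averaging, $\mu(B(L))=\sum_j c_j\,\mu_j(B_j(L))>1-\varepsilon$, so $\mu(C)<\varepsilon$, as required.

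The hard part is not analytic but bookkeeping: one must correctly marshal the weak-$*$ limit (Portmanteau for closed sets, which yields an \emph{upper} bound rather than equality) together with the three facts available from the setup, namely $\mu_j(B_j(L))>1-\varepsilon$, the disjointness of the $X_j$, and the convex-hull constraint coming from quasi-genericity. The single point requiring care is that Portmanteau provides only $\limsup_k \nu_{N_k}(C)\le\mu(C)$ for closed $C$, which is exactly the direction we need; had the set been phrased as the open neighborhood of $B(L)$ the inequality would run the wrong way, so it is essential to work with the closed set $\{d(\cdot,B(L))\ge\eta(L)\}$.
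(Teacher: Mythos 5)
Your proposal is correct and follows essentially the same route as the paper: extract a subsequence realizing the $\limsup$, pass to a further subsequence along which $x$ is quasi-generic for some $\mu\in{\rm conv}(\mu_1,\ldots,\mu_t)$, use $\mu_j(B_j(L))>1-\varepsilon$ to get $\mu(B(L))>1-\varepsilon$, and bound the frequency of visits to the closed set $\{d(\cdot,B(L))\ge\eta(L)\}$ by its $\mu$-measure. The only cosmetic difference is that you invoke the Portmanteau inequality for closed sets directly, whereas the paper proves that same inequality inline by building a Tietze/Urysohn function $h$ with $\ind{\{d(\cdot,B(L))\ge\eta(L)\}}\le h\le \ind{X\setminus B(L)}$; these are the same argument.
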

\begin{proof}
  Let $(N_i)$ be an increasing sequence of positive integers along which the convergence to the limit superior in the statement of the lemma holds. Extracting if necessary a subsequence, we can assume that $x$ is quasi-generic, along this sequence $(N_i)$, for a $T$-invariant measure $\mu$ which is of the form $\mu=\alpha_1\mu_1+\cdots+\alpha_t\mu_t$, with $\alpha_j\ge0$ and $\alpha_1+\cdots+\alpha_t=1$. Since $\mu_j(B(L))\ge \mu_j(B_j(L))>1-\varepsilon$ for each $j$, we also have $\mu(B(L))>1-\varepsilon$.

  Using again the Tietze extension theorem, we can construct a continuous function $h\in C(X)$, $0\le h\le 1$, satisfying $h(w)=0$ if $w\in B(L)$, and $h(w)=1$ if $d\bigl(w,B(L)\bigr)\ge \eta(L)$. Then, we have
  \begin{align*}
    & \limsup_{N\to\infty}\frac{1}{N}
  \#\Bigl\{ n\in\{0,\ldots,N-1\}: d\bigl(T^nx,B(L)\bigr)\ge \eta(L)\Bigr\} \\
  = & \lim_{i\to\infty}\frac{1}{N_i}
  \#\Bigl\{ n\in\{0,\ldots,N_i-1\}: d\bigl(T^nx,B(L)\bigr)\ge \eta(L)\Bigr\} \\
  \le & \lim_{i\to\infty} \frac{1}{N_i} \sum_{n<N_i} h(T^nx) \\
  = &\int_X h\,d\mu
  \le  \mu(X\setminus B(L)) < \varepsilon.
  \end{align*}
\end{proof}

We now fix an increasing sequence of integers $1\le L_1<L_2<\cdots$.
We also choose an increasing sequence of integers $(M_i)_{i\geq1}$ such that
\[
\lim_{i\to \infty} \frac{1}{M_i} \left|\sum_{ n<M_i} f(T^nx)\bfu(n)\right| = \limsup_{N\to\infty}\frac{1}{N} \left|\sum_{ n<N} f(T^nx)\bfu(n)\right|.
\]
Set $M_0:=0$. With the help of Lemma~\ref{lemma:density}, we can assume, passing to a subsequence if necessary, that for each $i\ge1$,
\begin{equation}
  \label{eq:defMm}
  \frac{1}{M_{i}-M_{i-1}} \#\Bigl\{ n\in\{M_{i-1},\ldots,M_i-1\}: d\bigl(T^nx,B(L_i)\bigr)\ge \eta(L_i)\Bigr\} < \varepsilon.
\end{equation}
We can also assume that for each $i\ge1$, $M_i$ is large enough to have
\begin{equation}
  \label{eq:LiNi}
  L_i<\varepsilon M_{i}.
\end{equation}

Then, for any integer $b\ge0$, there exists a unique $i\ge1$ such that $M_{i-1}\le b< M_i$. We  say that $b$ is \emph{good} if $d\bigl(T^bx,B(L_i)\bigr)<\eta(L_i)$. Observe that, by~\eqref{eq:defMm}, the proportion of good integers between $M_{i-1}$ and $M_i$ is always at least $1-\varepsilon$.

Finally, we inductively define a third increasing sequence of integers $0=b_0<b_1<b_2<\cdots$ in the following way. Let $b_1$ be the smallest good integer $b\ge1$. Assume that  the integer $b_k$ has already been defined ($k\ge1$), and that it is a good integer. Let $i\ge1$ be such that $M_{i-1}\le b_k<M_i$. Since $b_k$ is good, there exist $1\le j_k\le t$ and a point $x_k\in
B_{j_k}(L_i)$ such that $d(T^{b_k}x,x_k) < \eta (L_i)$. By the definition of $\eta(L_i)$, this implies that for any $0\le s<L_i$,
$|f(T^{b_k+s}x) - f(T^sx_k)|<\varepsilon$.  But by the definition of $B_{j_k}(L_i)$, the number of integers $s$, $0\le s<L_i-1$, such that
$T^sx_k\in W_{j_k}$ is at least $(1-\varepsilon)L_i$. Moreover, since $x_k\in W_{j_k}$, by~\eqref{eq:window}, for each such $s$, we have
\[
  f(T^sx_k) = g_{j_k} (S^s y_k), \text{ where }y_k:=\phi_{j_k}(x_k),
\]
which yields $|f(T^{b_k+s}x) - g_{j_k} (S^s y_k)| < \varepsilon$.
We therefore get
\begin{equation*}
  \sum_{s<L_i} |f(T^{b_k+s}x) - g_{j_k} (S^s y_k)| < \varepsilon L_i + 2\varepsilon L_i \|f\|_\infty.
\end{equation*}
Now, we define $b_{k+1}$ as the smallest integer $b\ge b_k+L_i$ which is good. The above inequality then gives
\begin{multline}
  \label{eq:between}
  \sum_{b_k\le n<b_{k+1}} |f(T^{n}x) - g_{j_k} (S^{n-b_k} y_k)|
  < \varepsilon (b_{k+1}-b_k)(1 + 2 \|f\|_\infty) \\
  +2\|f\|_\infty \#\{b_k\le n<b_{k+1}:\ n\text{ is not good}\}.
\end{multline}
And since $L_i\to\infty$, we also have $b_{k+1}-b_k\to\infty$.

Now, fix $i\ge1$. Let $K\ge1$ be the largest integer such that $b_{K}\le M_i$. We want to approximate
the sum
\[ S_{M_i}:=\sum_{ n<M_i} f(T^nx) \bfu(n)\]
by the following expression coming from the dynamical system $(Y,S)$
\[ E_K:=\sum_{ k<K} \sum_{b_k\le n<b_{k+1}} g_{j_k}  (S^{n-b_k} y_k) \bfu(n).\]
Considering that~\eqref{eq:between} holds for $1\le k< K$, observing that all integers $n\in\{0=b_0,\ldots,b_1-1\}$
are not good, and that the number of integers less than $M_i$ which are not good is bounded by $\varepsilon M_i$, we get
\[
  \left| S_{M_i} - E_K \right| < \varepsilon M_i (1+4\|f\|_\infty) + 2\|f\|_\infty (M_i-b_{K}).
\]
Now, by the definition of $K$, $M_i-b_{K}$ can be bounded by $L_{i}+\#\{n<M_i:n\text{ is not good}\}$, which is itself bounded by
$2\varepsilon M_i$ by~\eqref{eq:LiNi}.
Therefore, there exists a constant $C>0$ such that
\begin{multline*}
   \limsup_{N\to\infty}\frac{1}{N} \left|\sum_{n<N} f(T^nx)\bfu(n)\right|
   =\lim_{i\to \infty} \frac{1}{M_i} \left|S_{M_i}\right|
   \le \limsup_{K\to\infty} \frac{1}{b_{K}}|E_K| + C\varepsilon.
\end{multline*}
It only remains to do the following estimation:
\begin{align*}
  \frac{1}{b_{K}}|E_K| &\le
  \frac{1}{b_{K}} \sum_{ k<K} \left| \sum_{b_k\le n<b_{k+1}} g_{j_k}  (S^{n-b_k} y_k) \bfu(n) \right| \\
  & \le \sum_{1\le j\le t}
  \frac{1}{b_{K}} \sum_{ k<K} \left| \sum_{b_k\le n<b_{k+1}} g_{j}  (S^{n-b_k} y_k) \bfu(n) \right|,
\end{align*}
which goes to 0 as $K\to\infty$ by the strong MOMO property of $(Y,S)$. This concludes the proof of the implication
\ref{newP1} $\Longrightarrow$ \ref{newP2}.

\subsection{Proof of \ref{newP2} $\Longrightarrow$ \ref{P3}}
Before we begin the proof, let us recall the following elementary result.

\begin{Lemma}\label{lemma:cone}
Assume that $(c_n)\subset\C$ and $(m_n)\subset\N$. Then:
\begin{enumerate}[label=({\Alph*})]
\item
If for each sequence $(\vep_k)\subset\{-1,1\}^{\N}$, we have
$$\frac1{m_N}\sum_{n\leq N}\vep_nc_n\tend{N}{\infty} 0\text{ then }\frac1{m_N}\sum_{n\leq N}|c_n|\tend{N}{\infty} 0;$$
\item
If the sequence $(c_n)$ is contained in a convex cone $C$, and which is not a half-plane, then
$\frac1{m_N}\sum_{n\leq N}c_n\tend{N}{\infty} 0$ if and only if $\frac1{m_N}\sum_{n\leq N}|c_n|\tend{N}{\infty} 0$.
\end{enumerate}
\end{Lemma}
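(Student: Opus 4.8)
The plan is to treat the two parts independently; both reduce to elementary manipulations, and the only genuinely geometric ingredient is the description of planar convex cones in part~(B).

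For part~(A) I would split each term into its real and imaginary parts, $c_n = a_n + i b_n$ with $a_n,b_n\in\R$. Since the hypothesis holds for \emph{every} sign sequence, I make two well-chosen selections. Taking first $\vep_n := \operatorname{sgn}(a_n)$ (with the convention $\operatorname{sgn}(0)=1$), the convergence $\frac1{m_N}\sum_{n\le N}\vep_n c_n\to 0$ forces its real part to $0$; but that real part is exactly $\frac1{m_N}\sum_{n\le N}\vep_n a_n=\frac1{m_N}\sum_{n\le N}|a_n|$, so $\frac1{m_N}\sum_{n\le N}|a_n|\to 0$. Taking instead $\vep_n:=\operatorname{sgn}(b_n)$ and reading off the imaginary part gives $\frac1{m_N}\sum_{n\le N}|b_n|\to 0$. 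The bound $|c_n|\le |a_n|+|b_n|$ then yields the conclusion after summing and normalising. There is no real obstacle here.

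For part~(B), one implication is immediate and uses nothing about $C$: the triangle inequality gives $\bigl|\frac1{m_N}\sum_{n\le N}c_n\bigr|\le \frac1{m_N}\sum_{n\le N}|c_n|$, so $\frac1{m_N}\sum_{n\le N}|c_n|\to 0$ implies $\frac1{m_N}\sum_{n\le N}c_n\to 0$. The content is the converse. Here I would invoke the geometric fact that a convex cone in $\C\cong\R^2$ which is not a half-plane (and contains no full line) is contained in a closed angular sector of opening strictly less than $\pi$: there exist a direction $\theta_0$ and an angle $\delta<\pi/2$ with $C\subset\{re^{i\theta}: r\ge 0,\ |\theta-\theta_0|\le\delta\}$. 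Consequently, for every $c\in C$,
$$\operatorname{Re}\!\left(e^{-i\theta_0}c\right)=|c|\cos(\arg c-\theta_0)\ge |c|\cos\delta,\qquad \cos\delta>0.$$
If now $\frac1{m_N}\sum_{n\le N}c_n\to 0$, then also $\frac1{m_N}\sum_{n\le N}e^{-i\theta_0}c_n\to 0$, and taking real parts gives $\frac1{m_N}\sum_{n\le N}\operatorname{Re}(e^{-i\theta_0}c_n)\to 0$. Since each summand is nonnegative and at least $\cos\delta\,|c_n|$, the squeeze
$$0\le \cos\delta\cdot\frac1{m_N}\sum_{n\le N}|c_n|\le \frac1{m_N}\sum_{n\le N}\operatorname{Re}\!\left(e^{-i\theta_0}c_n\right)\tend{N}{\infty}0$$
together with $\cos\delta>0$ finishes the proof.

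The step requiring the most care is formulating and justifying the geometric fact in part~(B): beyond the closed half-plane, one must also exclude the degenerate non-salient cones (a full line, an open half-plane, the whole plane), for which the statement genuinely fails — e.g.\ $c_n=(-1)^n$ lies on a line and has vanishing Cesàro averages while $\frac1{m_N}\sum_{n\le N}|c_n|\not\to 0$. Thus the hypothesis must be read as ``$C$ is a salient convex cone strictly smaller than a half-plane,'' which is precisely what produces the uniform angular gap $\delta<\pi/2$ exploited above; once the sector description is secured, the remainder is the one-line rotation-and-squeeze argument.
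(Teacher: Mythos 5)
Your proof is correct and takes essentially the same route as the paper: part (A) via the real/imaginary decomposition and the sign choice $\vep_n=\operatorname{sgn}(\cdot)$, and part (B) via a rotation placing the cone in a sector of opening strictly less than $\pi$ together with the comparison $|c|\le\gamma\operatorname{Re}(c)$ (your $\gamma=1/\cos\delta$ is exactly the paper's constant $\gamma$). Your closing caveat --- that ``not a half-plane'' must be read as excluding degenerate cones such as a full line, for which the statement fails --- is a fair observation about the lemma's loose phrasing, and the paper's proof implicitly makes the same salience assumption when it normalizes the cone to lie between two half-lines in $\{x\ge 0\}$.
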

\begin{proof}
To see (A) write ($c_n=a_n+ib_n$, $a_n,b_n\in\R$) $$\frac1{m_N}\sum_{n\leq N}\vep_nc_n=
\frac1{m_N}\sum_{n\leq N}\vep_na_n+i\frac1{m_N}\sum_{n\leq N}\vep_nb_n\tend{N}{\infty} 0$$ to reduce the problem to $c_n\in\R$, and \added{then} set $\vep_n:={\rm sign}(c_n)$.

To see (B), first multiply the whole sequence by some $e\in \C$, $|e|=1$ to assume without loss of generality that
$C$ is the cone \deleted{given}\added{delimited} by the \added{half-}lines $y=a_1x$, $y=a_2x$, $a_1\neq0\neq a_2$, and $x\geq0$. It follows that there is a constant $\gamma\geq1$ such that
for each $c\in C$, $|c|\leq\gamma {\rm Re}(c)$. \deleted{By assumption,}\added{If $\frac1{m_N}\sum_{n\leq N}c_n\tend{N}{\infty} 0$, then}  $\frac1{m_N}\sum_{n\leq N}{\rm Re}(c_n)\tend{N}{\infty} 0$, whence
$$ \frac1{m_N}\sum_{n\leq N}|c_n|\leq \gamma\frac1{m_N}\sum_{n\leq N}{\rm Re}(c_n)\tend{N}{\infty}0.$$
\end{proof}

Let $(Y,S)$ be a uniquely ergodic model of $(Z,\cd, \kappa,R)$, and let $\nu$ be the unique $S$-invariant measure.
We fix a continuous function $f\in C(Y)$, an increasing sequence of integers $0=b_0<b_1<b_2<\cdots$ with $b_{k+1}-b_k\to\infty$, and
a sequence of points $(y_k)_{k\ge0}$ in $Y$. Let $\A:=\{1,e^{i2\pi/3},e^{i4\pi/3}\}$ be the set of third roots of unity. For each $k$, let $e_k\in\A$ be such that
\[
  e_k \left(  \sum_{b_k\le n<b_{k+1}} f(S^{n-b_k}y_k) \bfu(n)  \right)
\]
belongs to the closed convex cone $\{0\}\cup\{z\in\C:\arg(z)\in[-\pi/3,\pi/3]\}$.
Then, by Lemma~\ref{lemma:cone}, the convergence that we want to prove:
\[
  \frac{1}{b_K} \sum_{k<K}  \left| \sum_{b_k\le n<b_{k+1}} f(S^{n-b_k}y_k) \bfu(n) \right|  \tend{K}{\infty} 0
\]
is equivalent to the convergence
\begin{equation}
  \label{eq:convergencetoprove}
  \frac{1}{b_K} \sum_{ k<K}  \sum_{b_k\le n<b_{k+1}}  e_k f(S^{n-b_k}y_k) \bfu(n)  \tend{K}{\infty} 0.
\end{equation}

Now, we introduce a new topological dynamical system: let $X:=(Y\times \A)^\N$, and let $T$ be the shift on $X$. We define a
particular point $x\in X$ by setting
\begin{equation}\label{xn}
x_n:=(S^{n-b_k}y_k,e_k)\text{ whenever }b_k\le n<b_{k+1}.\end{equation}
\added{Let $\mu$ be a measure for which $x$ is quasi-generic, along a sequence $(N_r)$.}
Since $b_{k+1}-b_k\to\infty$, we have
$$
\left(\frac1{N_r}\sum_{n<N_r}\delta_{T^nx}\right)\Bigl(\{v\in X:\: (v_1,a_1)=(Sv_0,a_0)\}\Bigr)\tend{N_r}{\infty} 1.$$
Since the set $\{v\in X:\: (v_1,a_1)=(Sv_0,a_0)\}$ is closed, \added{by Portmanteau theorem} it must be of full measure $\mu$\deleted{ for which $x$ is quasi-generic}  in $(X,T)$. Moreover, such a measure $\mu$ must be $T$-invariant, hence, it is concentrated on the set of sequences
which are of the form $\Bigl( (y,a), (Sy,a), (S^2y,a),\ldots  \Bigr)$ for some $y\in Y$ and some $a\in \A$. Since $\mu$ is $T$-invariant,  its marginal given by the first $y$-coordinate is $S$-invariant, so it is equal to $\nu$ by unique ergodicity. On the other hand, the marginal given by the $a$-coordinate must be of the form $\alpha_0 \delta_1 + \alpha_1 \delta_{e^{i2\pi/3}}+\alpha_2 \delta_{e^{i4\pi/3}}$. Using now the fact that any ergodic system is disjoint from the identity, $\mu$ must be the direct product of its marginals on $Y^\N$ and $\A^\N$.
Hence, $\mu$ must be of the form $\alpha_0\mu_0+\alpha_1\mu_1+\alpha_2\mu_2$, where $\mu_j$ is the pushforward of $\nu$ by the map
$y\mapsto \Bigl( (y,e^{i2\pi j/3}), (Sy,e^{i2\pi j/3}), (S^2y,e^{i2\pi j/3}),\ldots  \Bigr)$.
Then $(X,\cb(X),\mu_j,S)$ is measure-theoretically isomorphic to $(Y,\cb(Y),\nu,S)$, hence also to $(Z,\cd,\kappa,R)$: the assumptions needed to apply \ref{newP2} are fulfilled. It follows that $x$ satisfies the Sarnak property, and if we write the corresponding convergence for the continuous function $g\in C(X)$ defined by $g\Bigl( (w_0,a_0), (w_1,a_1), (w_2,a_2),\ldots  \Bigr):=a_0f(w_0)$, we exactly get~\eqref{eq:convergencetoprove}.\footnote{\label{f:stopka1}Note also that $\int_Xg\,d\mu_j=a_0\int_Xf(w_0)\,d\mu_j=a_0\int_Y f\,d\nu$.}

Note that, if in the above proof $(Z,\cd,\kappa,R)$ is an arbitrary system of zero entropy, and $(Y,S)$ is\deleted{its} any uniquely ergodic model \added{of $(Z,\cd,\kappa,R)$}, then the topological entropy of $S$ is zero. The same property holds for the topological system $(X,T)$ constructed above. If Sarnak's conjecture holds, the system $(X,T)$ is M\"obius disjoint and hence the strong MOMO property holds for $(Y,S)$. It follows that if Sarnak's conjecture holds, then \added{the strong MOMO property is satisfied} in each zero entropy uniquely ergodic system\deleted{the strong MOMO property is satisfied}. In fact, we have even the assertion of Corollary~\ref{cor:12} which are now ready to prove:
\begin{proof}[Proof of Corollary~\ref{cor:12}]
In view of the above proof of \ref{newP2} $\Longrightarrow$ \ref{P3}, what we need to show is that the orbit closure of $x\in X$ defined in~\eqref{xn} under $T$ has zero topological entropy. Suppose first that $x$ is quasi-generic for some measure $\mu$. Denote by $\mu^{(1)}$ the marginal of $\mu$ given by the first $(y,e)$-coordinate. Arguing as above, we obtain that $\mu^{(1)}$ is a measure invariant under $S\times I$, in particular, it has zero entropy. Moreover, $\mu$ is the image of $\mu^{(1)}$ by the map $(I\times I)\times(S\times I)\times (S^2\times I)\times \dots$, i.e.\ also has zero entropy.

Consider now $z$ in the orbit closure of $x$ and suppose that it is a quasi-generic point for some measure. If $n_j\to \infty$ and $z=\lim_{j\to\infty}T^{n_j}x$, then either
$$
z=((y,e),(Sy,e),(S^2y,e),\dots) \text{ for some }(y,e)\in Y\times \mathbb{A}
$$
or
\begin{multline*}
z=((y_1,e_1),(Sy_1,e_1),\dots,(S^\ell y_1,e_1),(y_2,e_2),(Sy_2,e_2),\dots)\\
 \text{ for some }(y_1,e_1),(y_2,e_2)\in Y\times \mathbb{A} \text{ and }\ell\geq 0.
\end{multline*}
Indeed, we can approximate any ``window'' $z[1,M]$ by $T^{n_j}[1,M]=x[n_j+1,n_j+M]$ and when $n_j\to \infty$, such a window has at most one point of ``discontinuity'', that  is,  it  contains  at  most  once  two  consecutive  coordinates  which are not successive images by $S\times I$ of some $(y_k,e_k)\in Y\times\mathbb{A}$. Thus, to conclude, we can use the same argument as in the first part \added{of the} proof.
\end{proof}

\begin{Remark}\label{r:nec}
Assume that $\bfu\colon\N\to\C$ is an arithmetic function \deleted{for}\added{relatively to} which the Sarnak property holds for each zero entropy $T$. We have already noticed that $\bfu$ has to satisfy~\eqref{eq:Mobius-like} but in fact, we obtain a stronger condition.

Fix $(X,T)$ of zero entropy.
We claim that we have the following arithmetic version of the strong MOMO property:
\begin{equation}\label{eq:momoarith}
%\text{\parbox{}{\blindtext}}
\parbox{.85\textwidth}{for each $N\geq1$, $h=0,1,\ldots N-1$, $(b_k)\subset\N$ with $b_{k+1}-b_k\to\infty$,\\ $(x_k)$ and $f\in C(X)$, we have\\ $\frac1{b_K}\sum_{k<K}\left|\sum_{b_k\leq n<b_{k+1}}f(T^nx_k)\bfu(Nn+h)\right|\tend{K}{\infty}0$}
\end{equation}
Indeed, consider the $N$-discrete suspension $\widetilde{T}$ of $T$, i.e.\ $\widetilde{X}:=X\times\{0,1,\ldots,N-1\}$ and \added{let} the homeomorphism $\widetilde{T}$ act\deleted{s} by the formula $\widetilde{T}(x,j):=(x,j+1)$ when $0\leq j<N-1$ and $\widetilde{T}(x,N-1)=(Tx,0)$. Then $(\widetilde{X},\widetilde{T})$ has still zero entropy, and, by Corollary~\ref{cor:12},  the strong MOMO property is satisfied for $(\widetilde{X},\widetilde{T})$. Define $F\in C(\widetilde{X})$ by setting
$F(x,h)=f(x)$ and 0 otherwise. Hence
$$
\frac1{Nb_K}\sum_{k<K}\left|\sum_{Nb_k\leq n<Nb_{k+1}}F(\widetilde{T}^n(x_k,0))\bfu(n)\right|\tend{K}{\infty}0.$$
Therefore
$$
\frac1{Nb_K}\sum_{k<K}\left|\sum_{b_k\leq n<b_{k+1}}f(T^n(x_k))\bfu(Nn+h)\right|\tend{K}{\infty}0$$
and the claim~\eqref{eq:momoarith} follows.

In particular, the function $\bfu$ has to satisfy
\beq\label{eq:czymobius}
\frac1{b_K}\sum_{k<K}\left|\sum_{b_k\leq n<b_{k+1}}\bfu(Nn+h)\right|\tend{K}{\infty}0.
\eeq
Therefore, $\bfu$ is aperiodic\footnote{Note that  any non-principal Dirichlet character of modulus $q$ yields a (completely) multiplicative function for which we have~\eqref{eq:Mobius-like} (since the sum of the values along the period equals zero and $b_{k+1}-b_k\to \infty$) but which is not aperiodic.} and in fact, it is aperiodic on ``typical'' short interval:
\beq\label{eq:czymobius1}
\frac1M\sum_{M\leq m<2M}\left|\frac1H\sum_{m\leq g<m+H}\bfu(Ng+h)\right|\to 0\eeq
when $H\to\infty$ and $H/M\to0$.

Note that if above, for $X$ we take the one-point space, then $(\widetilde{X},\widetilde{T})$ stands for the rotation by 1 on $\Z/N\Z$ and the strong MOMO property
follows from~\cite{Ma-Ra-Ta}, see Subsection~\ref{s:MOMOds}. It follows that~\eqref{eq:czymobius} holds for $\bfu=\mob$.
\end{Remark}

\subsection{AOP implies~\ref{newP1},~\ref{newP2} and~\ref{P3} for M\"obius}
\label{sec:aop2P1}
We return now to the case where $\bfu\colon\N\to\C$ is a multiplicative function, $|\bfu|\leq1$, satisfying~\eqref{eq:Mobius-like}, in particular, we can take $\bfu=\mob$.

The purpose of this section is to prove that, in this setting, the AOP property ensures the validity of~\ref{newP1},~\ref{newP2} and~\ref{P3}.
For this, it will be useful to introduce the following weaker version of \ref{newP2} for the measure-theoretic dynamical system $(Z,\cd, \kappa,R)$, where we restrict the
class of continuous functions for which we demand that convergence~\eqref{eq:defSarnak} holds.

\setpropertytag{P2*}
\begin{property}\label{newP2*}
   For any topological dynamical system $(X,T)$ and any $x\in X$, if there exists a finite number of $T$-invariant measures $\mu_j$, $1\le j\le t$, such that
  \begin{itemize}
    \item for each $j$, $(X,\cb(X),\mu_j,T)$ is measure-theoretically isomorphic to $(Z,\cd,\kappa,R)$,
    \item  $\text{Q-gen}(x)\subset{\rm conv}(\mu_1,\ldots,\mu_t)$,
  \end{itemize}
  then, for any $f\in C(X)$ satisfying $\forall 1\le j\le t,\ \int_X f\,d\mu_j=0$, convergence~\eqref{eq:defSarnak} holds.
\end{property}

\begin{Prop}
  If $\bfu$ is a multiplicative function, $|\bfu|\leq1$, satisfying~\eqref{eq:Mobius-like}, then \ref{newP2*} $\Longrightarrow$\ref{P3}.
\end{Prop}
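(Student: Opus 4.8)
The plan is to derive \ref{P3} from \ref{newP2*} by re‑running the construction already used in the proof of \ref{newP2} $\Rightarrow$ \ref{P3}, the only genuinely new point being that \ref{newP2*} sees only functions of zero mean, so the mean part of the test function must be peeled off and absorbed into \eqref{eq:Mobius-like}. Fix a uniquely ergodic model $(Y,S)$ of $(Z,\cd,\kappa,R)$ with unique invariant measure $\nu$, a function $f\in C(Y)$, an increasing sequence $0=b_0<b_1<\cdots$ with $b_{k+1}-b_k\to\infty$, and a sequence $(y_k)$. First I would write $f=f_0+c$, with $c:=\int_Y f\,d\nu$ and $f_0:=f-c$ of zero $\nu$‑mean, and apply the triangle inequality inside each block to obtain
\begin{multline*}
\frac1{b_K}\sum_{k<K}\Bigl|\sum_{b_k\le n<b_{k+1}}f(S^{n-b_k}y_k)\bfu(n)\Bigr|
\le \frac1{b_K}\sum_{k<K}\Bigl|\sum_{b_k\le n<b_{k+1}}f_0(S^{n-b_k}y_k)\bfu(n)\Bigr|\\
+|c|\,\frac1{b_K}\sum_{k<K}\Bigl|\sum_{b_k\le n<b_{k+1}}\bfu(n)\Bigr|.
\end{multline*}
The last term tends to $0$ by \eqref{eq:Mobius-like} (this is precisely where the hypotheses $|\bfu|\le1$ and \eqref{eq:Mobius-like} on $\bfu$ are used), so everything reduces to proving the strong MOMO convergence for the \emph{zero-mean} function $f_0$.

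For $f_0$ I would reuse verbatim the machinery of the proof of \ref{newP2} $\Rightarrow$ \ref{P3}. Using the third roots of unity $\A=\{1,e^{i2\pi/3},e^{i4\pi/3}\}$, choose $e_k\in\A$ so that each block sum $\sum_{b_k\le n<b_{k+1}}f_0(S^{n-b_k}y_k)\bfu(n)$ is rotated into the convex cone $\{0\}\cup\{z\in\C:\arg(z)\in[-\pi/3,\pi/3]\}$; then, by Lemma~\ref{lemma:cone}(B), the convergence to be proved for $f_0$ is equivalent to
\[
  \frac1{b_K}\sum_{k<K}\sum_{b_k\le n<b_{k+1}}e_k\,f_0(S^{n-b_k}y_k)\,\bfu(n)\tend{K}{\infty}0.
\]
Next I would form the system $X:=(Y\times\A)^\N$ with the shift $T$ and the point $x$ defined by \eqref{xn} (with $f_0$ in place of $f$). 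Exactly as in the original argument, $\text{Q-gen}(x)\subset\mathrm{conv}(\mu_0,\mu_1,\mu_2)$, where $\mu_j$ is the pushforward of $\nu$ under $y\mapsto\bigl((y,e^{i2\pi j/3}),(Sy,e^{i2\pi j/3}),\dots\bigr)$, and each $(X,\cb(X),\mu_j,T)$ is measure-theoretically isomorphic to $(Z,\cd,\kappa,R)$.

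The decisive new observation is that the continuous function $g\bigl((w_0,a_0),(w_1,a_1),\dots\bigr):=a_0 f_0(w_0)$ now satisfies $\int_X g\,d\mu_j=e^{i2\pi j/3}\int_Y f_0\,d\nu=0$ for every $j$, precisely because we passed to the zero-mean part $f_0$; this is exactly the hypothesis needed to invoke the weaker property \ref{newP2*} (instead of \ref{newP2}) for $g$. Its conclusion is the Sarnak property for $x$ relatively to $g$, which unwinds to the displayed convergence for $f_0$; together with the cone reduction this yields the strong MOMO property for $f_0$, and then, via the constant-part estimate, for $f$. As $f$, $(b_k)$ and $(y_k)$ were arbitrary, $(Y,S)$ has the strong MOMO property, proving \ref{P3}. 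I expect no serious obstacle beyond the bookkeeping already carried out in the proof of \ref{newP2} $\Rightarrow$ \ref{P3} (in particular the identification of $\text{Q-gen}(x)$); the one conceptual step worth isolating is recognizing that passing to $f_0$ is exactly what makes $g$ eligible for \ref{newP2*}, while the nonzero mean $c$ is handled cost-free by \eqref{eq:Mobius-like}.
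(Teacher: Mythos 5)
Your proposal is correct and is essentially identical to the paper's proof: the paper likewise reduces, via~\eqref{eq:Mobius-like}, to a function $f$ with $\int_Y f\,d\nu=0$, and then observes (footnote~\ref{f:stopka1}) that the function $g$ built in the proof of \ref{newP2} $\Longrightarrow$ \ref{P3} satisfies $\int_X g\,d\mu_j=a_0\int_Y f\,d\nu=0$, so that \ref{newP2*} suffices. You have merely written out explicitly the $f=f_0+c$ decomposition and triangle-inequality bookkeeping that the paper leaves implicit.
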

\begin{proof}
It follows by~\eqref{eq:Mobius-like} that if we want to prove the strong MOMO property in
 %This follows from the following observation: by~\eqref{eq:Mobius-like}, if we want to prove the strong MOMO property in
  a specific uniquely ergodic model $(Y,S)$ with the unique invariant measure $\nu$,
  it is enough to check the required convergence  for a continuous function $f$ with $\int_Y f\,d\nu=0$. But then the continuous function $g$ constructed at the end of the proof of \ref{newP2} $\Longrightarrow$ \ref{P3} will satisfy $\int_X g\, d\mu_j=0$ for $j=0,1,2$ (see footnote~\ref{f:stopka1}), hence property~\ref{newP2*} will be enough.
\end{proof}

We will need the following criterion:

\begin{Prop} [KBSZ criterion, \cite{Ka,Bo-Sa-Zi}, see also \cite{Ab-Le-Ru1}] \label{p:kbsz}
Let $(a_n)\subset\C$ be bounded.
If
\beq\label{eq:kbsz0}
\limsup_{\mathscr{P}\ni r,s\to\infty,r\neq s}\limsup_{N\to\infty}\left|\frac1N\sum_{n< N}
a_{rn}\ov{a}_{sn}\right|=0\eeq
then $\lim_{N\to\infty}\frac1N\sum_{n< N}a_n\bfv(n)=0$ for each multiplicative function $\bfv\colon\N\to\C$, $|\bfv|\leq1$.\end{Prop}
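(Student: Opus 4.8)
The plan is to establish the classical K\'atai--Bourgain--Sarnak--Ziegler inequality by the prime-detection (Tur\'an--Kubilius) method, exploiting the multiplicativity of $\bfv$ in order to convert the one-parameter average $\frac1N\sum_{n<N}a_n\bfv(n)$ into the pairwise correlations $\frac1N\sum_{n<N}a_{rn}\ov{a}_{sn}$ that hypothesis~\eqref{eq:kbsz0} controls. After rescaling we may assume $|a_n|\le1$ (the case $\sup_n|a_n|=0$ being trivial), and we fix a multiplicative $\bfv$ with $|\bfv|\le1$. Arguing by contradiction, I would suppose $\limsup_{N}\frac1N\bigl|\sum_{n<N}a_n\bfv(n)\bigr|=c>0$.

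First I would fix a large integer $k$ and then, using the outer $\limsup$ in~\eqref{eq:kbsz0}, choose a threshold $p_0$ so large that $\limsup_N\bigl|\frac1N\sum_{n<N}a_{rn}\ov{a}_{sn}\bigr|$ is as small as desired for all distinct primes $r,s>p_0$; I then select primes $p_0<p_1<\dots<p_k$. The heart of the matter is the following estimate. Double counting gives $\sum_{i\le k}\sum_{p_i\mid n}a_n\bfv(n)=\sum_n a_n\bfv(n)\,\#\{i:p_i\mid n\}$, and on the multiples of each $p_i$ one uses $\bfv(p_i n)=\bfv(p_i)\bfv(n)$ when $(n,p_i)=1$ (the squareful terms contributing only $O(\sum_i N/p_i^2)$), so that this prime-weighted version of the target sum equals $\sum_i\bfv(p_i)\sum_{(m,p_i)=1}a_{p_i m}\bfv(m)$ up to a negligible error. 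Regrouping by $m$, bounding $|\bfv(m)|\le1$ to strip off the inner multiplicative weight, and then applying Cauchy--Schwarz in $m$, the square of this quantity is dominated by a diagonal contribution (of the expected order, which yields a main term decaying as the number of primes grows) together with an off-diagonal contribution equal to $\sum_{i\ne j}\bfv(p_i)\ov{\bfv(p_j)}\sum_m a_{p_i m}\ov{a}_{p_j m}$, in which only the \emph{unweighted} correlations of $(a_n)$ appear.

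At this point the two parameters do the work: the off-diagonal sum is controlled pair by pair by~\eqref{eq:kbsz0}, so taking $p_0$ large makes it negligible relative to the diagonal, while the diagonal main term is beaten by taking $k$ large. Comparing the prime-weighted sum back to $\sum_{n<N}a_n\bfv(n)$ itself---again through the Tur\'an--Kubilius inequality, which says that $\#\{i:p_i\mid n\}$ concentrates around its mean---then forces $c=0$, the desired contradiction. I would arrange the bookkeeping so that $k$ is chosen first to beat the main term and only afterwards $p_0$ (equivalently, the smallness of the correlations) is chosen to beat the off-diagonal.

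The step I expect to be most delicate is precisely the normalisation in this core inequality: one must set up the prime-detection so that the diagonal genuinely produces a main term tending to $0$ as the number $k$ of primes increases (rather than merely a bounded quantity), and so that the off-diagonal is matched by the \emph{averaged} pairwise correlations. Getting this balance right---the number of primes versus their sizes, together with the coprimality and squareful error terms---is the crux; a secondary technical point is to accommodate primes $p_i$ at which $\bfv$ is small or vanishes, which is harmless here since such primes can only improve the relevant bounds. This is exactly the content of the cited references~\cite{Ka,Bo-Sa-Zi}.
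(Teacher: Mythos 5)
The paper offers no proof of Proposition~\ref{p:kbsz} --- it is quoted from \cite{Ka,Bo-Sa-Zi} --- so your attempt must be measured against the classical K\'atai/Bourgain--Sarnak--Ziegler argument, and the toolkit you invoke (Tur\'an--Kubilius comparison, extraction of $\bfv(p)$ by multiplicativity with a squareful error $O(\sum_i N/p_i^2)$, Cauchy--Schwarz, diagonal versus off-diagonal) is indeed the right one. However, your explicitly announced bookkeeping --- ``$k$ is chosen first to beat the main term and only afterwards $p_0$\,'' --- is backwards, and as stated the argument fails. The diagonal is not beaten by the \emph{number} $k$ of primes but by $L:=\sum_{i\le k}1/p_i$: the inequality the method yields is roughly $c\,L\le\sqrt{L+\mathrm{offdiag}}+O(\sqrt L)$, where the $O(\sqrt L)$ comes from the Tur\'an--Kubilius fluctuation, so one needs $L\to\infty$. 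If you fix $k$ first and then let $p_0\to\infty$, every prime exceeds $p_0$ and $L\le k/p_0\to0$, so the comparison of the prime-weighted sum with $L\sum_{n<N}a_n\bfv(n)$ becomes vacuous (the fluctuation term $N\sqrt L$ swamps the putative main term $cNL$). Necessarily $p_0$ comes first, and the primes must then fill a long range above $p_0$ so that $\sum 1/p$ is large; $k$ is determined last.

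There is a second, subtler gap even after reversing the order, and it is exactly at the spot you flag as ``most delicate.'' Hypothesis~\eqref{eq:kbsz0} only gives $\limsup_N\frac1N\bigl|\sum_{n<N}a_{rn}\ov{a}_{sn}\bigr|\le\delta$ for pairs above $p_0(\delta)$ --- the pairwise limsups need not vanish (this is precisely how the criterion is used in this paper, where AOP yields smallness, not vanishing). With one global Cauchy--Schwarz over an unstructured set $P\subset(p_0,z]$, the off-diagonal is at most $\delta N\sum_{p\neq q\in P}1/\max(p,q)$, and $\sum_{p\neq q\in P}1/\max(p,q)$ has order $z/\log^2z$, vastly larger than the diagonal $NL$ with $L\approx\log(\log z/\log p_0)$; making it negligible would require $\delta$ small compared to $1/\#P$, while $\#P$ depends on $p_0=p_0(\delta)$, which depends on $\delta$ --- a circle the hypothesis does not let you break. (In K\'atai's original setting each fixed pairwise correlation tends to $0$, so for a fixed finite $P$ the off-diagonal dies as $N\to\infty$ and no such tension arises; \eqref{eq:kbsz0} is weaker.) The repair, which is the actual content of \cite{Bo-Sa-Zi}, is to apply Cauchy--Schwarz \emph{separately on dyadic blocks} $P_j:=\{p\in\mathscr{P}:2^j<p\le2^{j+1}\}$, $J_1\le j\le J_2$, with $2^{J_1}>p_0$, and then sum over blocks by the triangle inequality: within a block all primes are comparable, $\#P_j\approx 2^j/j$, and the $j$-th block contributes at most $N\bigl(\sqrt{L_j2^{-j}}+\sqrt\delta/j\bigr)$, so summing gives $|T|\le N\bigl(O(1)+O(\sqrt\delta)L\bigr)$ and hence $\frac1N\bigl|\sum_{n<N}a_n\bfv(n)\bigr|\ll\sqrt\delta+1/\sqrt L$, which closes the proof with the legitimate order: $\delta$ first, then $p_0(\delta)=2^{J_1}$, then $J_2$ (equivalently $k$) last. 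Without this blockwise structure your single Cauchy--Schwarz cannot match the off-diagonal to the averaged correlations under the stated hypothesis.
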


%In what follows, we will refer to Proposition~\ref{p:kbsz} as the KBSZ criterion.

\begin{Th}
  If $\bfu$ is a  multiplicative function, $|\bfu|\leq1$, satisfying~\eqref{eq:Mobius-like}, then AOP $\Longrightarrow$ \ref{newP2*}.
  In particular, the implication holds for the M\"obius function $\mob$.
\end{Th}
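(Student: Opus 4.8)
The plan is to derive \ref{newP2*} from the KBSZ criterion (Proposition~\ref{p:kbsz}). Fix a topological system $(X,T)$, the measures $\mu_1,\dots,\mu_t$ and the point $x$ as in \ref{newP2*}, together with $f\in C(X)$ satisfying $\int_X f\,d\mu_j=0$ for every $j$. Set $a_n:=f(T^nx)$, which is a bounded sequence since $f$ is continuous on the compact space $X$. By Proposition~\ref{p:kbsz}, to obtain $\frac1N\sum_{n<N}a_n\bfu(n)\to0$ (i.e.\ convergence~\eqref{eq:defSarnak}) it suffices to establish the correlation estimate
\[
\limsup_{\mathscr{P}\ni r,s\to\infty,\ r\neq s}\ \limsup_{N\to\infty}\left|\frac1N\sum_{n<N}f(T^{rn}x)\overline{f(T^{sn}x)}\right|=0.
\]
Since $\frac1N\sum_{n<N}f(T^{rn}x)\overline{f(T^{sn}x)}=\frac1N\sum_{n<N}(f\otimes\overline f)\bigl((T^r\times T^s)^n(x,x)\bigr)$, the whole task is to control the ergodic averages of $f\otimes\overline f$ along the $T^r\times T^s$-orbit of $(x,x)$, and to do so uniformly as $r,s$ run through the primes.

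Fix primes $r\neq s$ and choose a subsequence $(N_i)$ realizing the inner $\limsup$. Passing to a further subsequence, I may assume that $(x,x)$ is quasi-generic for a $T^r\times T^s$-invariant measure $\rho$ on $X\times X$ along $(N_i)$, and that $\frac1{N_i}\sum_{n<N_i}\delta_{T^{rn}x}$ and $\frac1{N_i}\sum_{n<N_i}\delta_{T^{sn}x}$ converge to measures $\lambda_1=\pi_1\rho$ and $\lambda_2=\pi_2\rho$. The key point is that $\lambda_1$ is a convex combination of the $\mu_j$. Indeed,
\[
\frac1{rN_i}\sum_{m<rN_i}\delta_{T^mx}=\frac1r\sum_{\ell=0}^{r-1}T^\ell_*\Bigl(\frac1{N_i}\sum_{n<N_i}\delta_{T^{rn}x}\Bigr)\longrightarrow\frac1r\sum_{\ell=0}^{r-1}T^\ell_*\lambda_1,
\]
and the right-hand measure lies in $\text{Q-gen}(x)\subset{\rm conv}(\mu_1,\dots,\mu_t)$, so it equals $\sum_j\beta_j\mu_j$ for some weights $\beta_j\ge0$. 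Since AOP forces $R$, hence each $\mu_j$, to be totally ergodic, every $\mu_j$ is $T^r$-ergodic; comparing the $T^r$-ergodic decompositions of the two equal measures $\frac1r\sum_\ell T^\ell_*\lambda_1=\sum_j\beta_j\mu_j$ and using $T^\ell_*\mu_j=\mu_j$ shows that the $T^r$-ergodic components of $\lambda_1$ are among the $\mu_j$, i.e.\ $\lambda_1=\sum_j\gamma_j\mu_j$. The same argument applied with $s$ gives $\lambda_2=\sum_j\gamma'_j\mu_j$.

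Finally I decompose $\rho=\int\rho_\omega\,dP(\omega)$ into $T^r\times T^s$-ergodic components. Each factor $\pi_1\rho_\omega$ is $T^r$-ergodic, and by uniqueness of the ergodic decomposition applied to $\pi_1\rho=\lambda_1=\sum_j\gamma_j\mu_j$ it equals some $\mu_{j(\omega)}$ for $P$-a.e.\ $\omega$; likewise $\pi_2\rho_\omega=\mu_{j'(\omega)}$. Transporting $\rho_\omega$ through $\phi_{j(\omega)}\times\phi_{j'(\omega)}$ onto $Z\times Z$ (recall $(X,\mu_j,T)\cong(Z,\cd,\kappa,R)$, hence $(X,\mu_j,T^r)\cong(Z,\kappa,R^r)$ and $(X,\mu_{j'},T^s)\cong(Z,\kappa,R^s)$) produces an element of $J^e(R^r,R^s)$, and with $F_j:=f\circ\phi_j^{-1}$,
\[
\int_{X\times X}f\otimes\overline f\,d\rho_\omega=\int_{Z\times Z}F_{j(\omega)}\otimes\overline{F_{j'(\omega)}}\,d\tilde\rho_\omega.
\]
Because $\int_Z F_j\,d\kappa=\int_X f\,d\mu_j=0$, the finitely many functions $F_1,\dots,F_t$ and their conjugates all lie in $L^2_0(Z,\cd,\kappa)$, so putting $\varepsilon(r,s):=\max_{j,j'}\sup_{\eta\in J^e(R^r,R^s)}\bigl|\int F_j\otimes\overline{F_{j'}}\,d\eta\bigr|$, the AOP property~\eqref{momoe4} gives $\varepsilon(r,s)\to0$ as $r,s\to\infty$ through primes with $r\neq s$. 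Integrating over $\omega$ yields $\bigl|\int f\otimes\overline f\,d\rho\bigr|\le\varepsilon(r,s)$, whence $\limsup_N\bigl|\frac1N\sum_{n<N}f(T^{rn}x)\overline{f(T^{sn}x)}\bigr|\le\varepsilon(r,s)$ and the correlation estimate follows; Proposition~\ref{p:kbsz} then concludes the proof. The step demanding the most care is the identification of the marginals of $\rho$, and of the ergodic components $\rho_\omega$, as convex combinations of the $\mu_j$: this is precisely where total ergodicity—and hence AOP—is used, via the passage through the arithmetic-progression averages above.
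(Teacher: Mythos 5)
Your proof is correct and takes essentially the same route as the paper's: both apply the KBSZ criterion to $\bigl(f(T^nx)\bigr)$, pass to a weak-$*$ limit $\rho$ of the empirical measures on $X\times X$, use the averaging identity over arithmetic progressions together with total ergodicity (forced by AOP) to identify the marginals of the ergodic components of $\rho$ with the measures $\mu_j$, and then transport each ergodic component via the isomorphisms to an element of $J^e(R^r,R^s)$, where AOP applied to the finitely many functions $f_j\in L^2_0(Z,\cd,\kappa)$ gives the required smallness. Your identification of $\pi_1\rho$ as an exact convex combination of the $\mu_j$ is slightly more explicit than the paper's absolute-continuity formulation, but this is a refinement within the same argument, not a different approach.
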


\begin{proof}
  Assume that $(Z,\cd,\kappa,R)$ enjoys the AOP property. Let $(X,T)$, $x$, $\mu_1,\ldots,\mu_t$ and $f$ be as in the assumptions of Property~\ref{newP2*}.
  In order to prove the convergence~\eqref{eq:defSarnak}, we want to apply the KBSZ criterion to the sequence ${\bigl(f(T^nx)\bigr)}_{n\ge0}$. Given $\varepsilon>0$, we have to show that, if $r\neq s$ are two different
  primes which are large enough, then
  \[
    \limsup_{N\to\infty}\frac{1}{N}\left| \sum_{n<N} f(T^{rn}x)\overline{f}(T^{sn}x)\right| < \varepsilon.
  \]
  Let $(N_i)$ be an increasing sequence of integers along which the convergence to the above limit superior holds. Without loss of generality, we can also assume that
  the sequence of empirical measures
  \[
    \sigma_{N_i}:=\frac{1}{N_i} \sum_{ n<N_i} \delta_{(T^{rn}x,T^{sn}x)}
  \]
  converges weakly to some $T^r\times T^s$-invariant measure $\rho$ on $X\times X$.
  We then have
 \[
    \limsup_{N\to\infty}\frac{1}{N}\left| \sum_{ n<N} f(T^{rn}x)\overline{f}(T^{sn}x)\right| = \left| \int_{X\times X} f\otimes \overline{f}\, d\rho\right|,
  \]
  and it is enough to prove that, if $r$ and $s$ are large enough, then for each ergodic component $\gamma$ of $\rho$, we have
  \[
     \left| \int_{X\times X} f\otimes \overline{f}\, d\gamma\right| < \varepsilon.
  \]

  Let $\rho_1$ (respectively $\rho_2$) be the marginal of $\rho$ on the first (respectively the second) coordinate. Then $\rho_1$ is $T^r$-invariant,
  and
  \[
    \frac{1}{r}\left( \rho_1 + T_*(\rho_1)+\cdots+T_*^{r-1}(\rho_1)\right) = \lim_{i\to\infty} \frac{1}{rN_i} \sum_{0\le n<rN_i} \delta_{T^nx}
  \]
is a $T$-invariant probability measure on $X$ for which $x$ is quasi-generic. By assumption, this measure is a convex combination of $\mu_1,\ldots,\mu_t$, and $\rho_1$ is absolutely continuous with respect to this convex combination. By the same argument, $\rho_2$ is also absolutely continuous with respect to some convex combination of $\mu_1,\ldots,\mu_t$.
Moreover, since $(Z,\cd,\kappa,R)$ has the AOP property, this system is totally ergodic, and by isomorphism, this also holds for each $(X,\cb(X),\mu_j,T)$. It follows that each ergodic component of $\rho$ is an ergodic joining of $(X,\cb(X),\mu_i,T^r)$ and $(X,\cb(X),\mu_j,T^s)$ for some $i,j\in\{1,\ldots,t\}$.

For each $j$, $1\le j\le t$, let $\varphi_j\colon (Z,\cd,\kappa,R)\to (X,\cb(X),\mu_j,T)$ be an isomorphism of measure-theoretic dynamical systems. Set also $f_j:=f\circ\varphi_j\in L^2(\kappa)$.
Let $\gamma\in J^e\bigl((X,\cb(X),\mu_i,T^r),(X,\cb(X),\mu_j,T^s)\bigr)$ be an ergodic component of $\rho$. Then the pushforward image $(\varphi_i\times\varphi_j)_*(\gamma)$ is an ergodic joining of $(Z,\cd,\kappa,R^r)$ and $(Z,\cd,\kappa,R^s)$, and we have
\begin{multline*}
  \left|\int_{X\times X}f\otimes \overline{f}\, d\gamma\right| = \left|\int_{Z\times Z}f_i\otimes \overline{f_j}\, d(\varphi_i\times\varphi_j)_*(\gamma)\right|\\
  \le \sup_{i,j\in\{1,\ldots,t\}} \sup_{\eta\in J^e(R^r,R^s)} \left|\int_{Z\times Z}f_i\otimes \overline{f_j}\, d\eta\right|.
\end{multline*}
But by the AOP property, if $r$ and $s$ are large enough, the modulus of the RHS is bounded by $\varepsilon$.
  \end{proof}

\subsection{Strong MOMO property in positive entropy systems}\label{s:momo}
In this section we prove Corollaries~\ref{p1j},~\ref{c11}, Corollary~\ref{dodENT} and equation~\eqref{m1}.

\begin{proof}[Proof of Corollary~\ref{p1j}]
 Suppose that the strong MOMO property (relative to $\bfv$) holds for $(X,T)$. Then~\ref{newP1}
from~\ref{thmAPm}
holds (for $\bfv$). Equivalently,~\ref{newP2}
holds.
Notice that if we take $\bfu$ as above then
the assumptions of~\ref{newP2}
are satisfied here (for $(\D_L^{\Z},S)$ and $\bfu$), whence the assertion of~\ref{newP2}
also holds, i.e.\ $\bfu$ satisfies the Sarnak property (relatively to $\bfv$). Take $g(y):=\overline{y(0)}$, and note that
$\frac1N\sum_{n\leq N}\bfu(n)\ov{\bfv(n)}=\frac1N\sum_{n\leq N}g(S^n\bfu)\ov{\bfv(n)}\to0$
which is a contradiction.
\end{proof}

Notice that Corollary~\ref{p1j} puts some restrictions on dynamical properties of measures for which $\bfu$ is  quasi-generic.

\begin{Cor}\label{c:aop}
Let $\bfu$ be a multiplicative arithmetic function, $|\bfu|\leq1$. Assume that $\text{Q-gen}(\bfu)\subset{\rm conv}(\kappa_1,\ldots,\kappa_m)$, where all $(X_{\bfu},\kappa_j,S)$ are isomorphic and have the AOP property. Then $\kappa_1=\ldots=\kappa_m=\delta_{(\ldots00.00\ldots)}$, i.e.\ $\bfu$ is equal to the zero sequence, up to a set of zero density.
\end{Cor}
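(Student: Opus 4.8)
The plan is to combine Corollary~\ref{p1j} with the fact that a multiplicative function $\bfu$ correlates with itself whenever it is not negligible, and then to feed the AOP hypothesis into the preceding machinery to force a contradiction unless every $\kappa_j$ is the point mass at the zero sequence. First I would observe that the AOP property implies zero entropy (as recalled in the excerpt after~\eqref{momoe4}), and that any multiplicative $\bfu$ with $|\bfu|\le1$ satisfying the hypotheses of Corollary~\ref{p1j} sits inside $(\D_L)^{\Z}$ for $L=1$. The key point is that if $\bfu$ is \emph{not} zero up to density zero, then $\frac1N\sum_{n\le N}\bfu(n)\ov{\bfu(n)}=\frac1N\sum_{n\le N}|\bfu(n)|^2$ does \emph{not} converge to zero, so $\bfu$ correlates with $\bfv:=\bfu$ in the sense of Corollary~\ref{p1j}.

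The second step is to apply the AOP hypothesis through the chain of implications established earlier. Since each $(X_{\bfu},\kappa_j,S)$ has the AOP property and they are all isomorphic, the common system $(Z,\cd,\kappa,R):=(X_{\bfu},\kappa_1,S)$ enjoys AOP, and by the Theorem of Subsection~\ref{sec:aop2P1} (AOP $\Rightarrow$~\ref{newP2*}$\Rightarrow$~\ref{P3}) together with the \ref{thmAPm} equivalences, Property~\ref{P3} holds: every uniquely ergodic model of $R$ satisfies the strong MOMO property relatively to $\bfu$. In particular, taking a uniquely ergodic model $(X,T)$ of $((\D_L)^{\Z},\kappa,S)$ (which exists by Jewett--Krieger), $(X,T)$ satisfies the strong MOMO property relatively to $\bfv=\bfu$.

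The contradiction is then immediate: Corollary~\ref{p1j}, applied with this $(X,T)$ and with $\bfv=\bfu$, asserts that once $\bfu$ and $\bfv=\bfu$ correlate, the system $(X,T)$ does \emph{not} satisfy the strong MOMO property relatively to $\bfv$. This contradicts the conclusion of the previous paragraph. Hence the correlation assumption must fail, i.e.\ $\frac1N\sum_{n\le N}|\bfu(n)|^2\to0$, which for a function taking values in the unit disc forces $\bfu(n)=0$ for all $n$ outside a set of density zero. Once $\bfu$ vanishes off a density-zero set, every measure in $\text{Q-gen}(\bfu)$ must be concentrated on sequences equal to $(\ldots00.00\ldots)$, so each $\kappa_j=\delta_{(\ldots00.00\ldots)}$.

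The main obstacle I anticipate is verifying cleanly that the hypotheses of Corollary~\ref{p1j} are genuinely met when we set $\bfv=\bfu$: one must check that $\kappa\ne\delta_{(\ldots00.00\ldots)}$ is equivalent to the failure of $\frac1N\sum_{n\le N}|\bfu(n)|^2\to0$, using that $\bfu$ is generic (or quasi-generic) for the convex combination of the $\kappa_j$ so that $\int |y(0)|^2\,d\kappa_j$ controls the Cesàro averages of $|\bfu(n)|^2$. The passage from ``$\int|y(0)|^2\,d\kappa_j=0$ for all $j$'' to ``$\kappa_j=\delta_{(\ldots00.00\ldots)}$'' then follows by $S$-invariance, since $\int|y(k)|^2\,d\kappa_j=\int|y(0)|^2\,d\kappa_j=0$ for every $k$ forces all coordinates to vanish $\kappa_j$-almost surely.
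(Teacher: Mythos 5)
Your argument is correct and is essentially the paper's own proof: pass to a uniquely ergodic model of $(X_{\bfu},\kappa_1,S)$, use that AOP is an isomorphism invariant so that (via Theorem~\ref{thmB}, i.e.\ AOP $\Rightarrow$~\ref{newP2*}~$\Rightarrow$~\ref{P3}) this model satisfies the strong MOMO property relatively to $\bfu$, and then apply Corollary~\ref{p1j} with $\bfv=\bfu$ to rule out self-correlation and force $\kappa_1=\ldots=\kappa_m=\delta_{(\ldots00.00\ldots)}$. Two shared fine points, neither of which separates you from the paper: like the authors, you implicitly rely on the standing assumption of Section~\ref{sec:aop2P1} that $\bfu$ satisfies~\eqref{eq:Mobius-like} (needed for \ref{newP2*}~$\Rightarrow$~\ref{P3}), and strictly speaking $\frac1N\sum_{n\le N}|\bfu(n)|^2\to0$ gives $|\bfu(n)|\to0$ in density rather than $\bfu(n)=0$ off a density-zero set --- which still yields $\int|y(0)|^2\,d\kappa_j=0$ and hence $\kappa_j=\delta_{(\ldots00.00\ldots)}$ exactly as in your last paragraph.
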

\begin{proof}
Let $(X,T)$ be any uniquely ergodic model of $(X_{\bfu},\kappa_1,S)$. Then, since the AOP property is an isomorphism invariant, $(X,T)$ also satisfies the AOP property. Since this system is additionally uniquely ergodic, we have the strong MOMO property for $(X,T)$ and, by Corollary~\ref{p1j} (in which $\bfv=\bfu$), the proof is complete.
\end{proof}

In particular, if we consider $\lio$ (or $\mob$) then the set $\text{Q-gen}(\lio)$   cannot be contained in ${\rm conv}(\kappa_1,\ldots,\kappa_m)$, where the dynamical systems given by $\kappa_j$ are all isomorphic and have the AOP property.\footnote{The AOP property can be replaced by the existence of a uniquely ergodic model of the dynamical system associated to $\kappa_1$ for which we have the strong MOMO property.}
Studying properties of dynamical
systems given by (potentially many) measures in $\text{Q-gen}( \lio)$ is important in the
light of a recent remarkable result of N.\ Frantzikinakis \cite{Fr} which says that if
the logarithmic averages $\frac1{N_k}\sum_{n\leq N_k}\frac{\delta_{S^n\lio}}{n}$ converge to an ergodic measure then the logarithmic Chowla conjecture holds along the subsequence $(N_k)$. In particular, if
$\text{Q-gen}(\lio )$ consists solely of one measure and this measure is ergodic then the Chowla conjecture holds.

\begin{proof}[Proof of Corollary~\ref{c11}]
Assume that we have found a dynamical system $(X,T)$, $h(X,T)>h((\D_L)^{\Z},\kappa,S)$ which has the strong MOMO property relatively to $\bfv$. Then, there exists an ergodic $T$-invariant measure $\mu$ such that the entropy of the system $(X,\mu,T)$ is larger than $h((\D_L)^{\Z},\kappa,S)$. Next, by Sinai's theorem,
the Bernoulli automorphism $((\D_L)^{\Z},\kappa,S)$ is a measure-theoretic factor of $(X,\mu,T)$. By a theorem of B.\ Weiss \cite{We} there are uniquely (in fact, strictly) ergodic systems $(X',T')$, $(Y',S')$ (with the unique invariant measures $\mu'$ and $\nu'$, respectively) and a continuous, equivariant map $\pi'\colon X'\to Y'$ such that
$(X',\mu',T')$, $(Y',\nu',S')$ are measure-theoretically isomorphic to $(X,\mu,T)$ and $(X_{\bfu},\kappa,S)$, respectively.
Since $(X,T)$ has the strong MOMO property relative to $\bfv$, so has $(X',T')$ (by~\ref{thmAPm}).
Since $(Y',S')$ is a topological factor of
$(X',T')$, also $(Y',S')$ enjoys the strong MOMO property relatively to $\bfv$. But $(Y',S')$ is a uniquely ergodic model of $(X_{\bfu},\kappa,S)$, and we obtain a contradiction with Corollary~\ref{p1j}.
\end{proof}

\begin{proof}[Proof of Corollary~\ref{dodENT}]
The Chowla conjecture for $\lio$ means that $\lio$ is a generic point for the Bernoulli measure $B(1/2,1/2)$ on $\{-1,1\}^{\Z}$. In view of Corollary~\ref{c11}, all we need to prove is that we can find $\bfu\in\{0,1\}^{\Z}$ a generic point for a Bernoulli measure $\kappa$ of arbitrarily small entropy such that $\bfu$ correlates with $\lio$ (indeed, set $\bfv=\lio$ in Corollary~\ref{c11}).

Let $0<p<1$ and denote by $\rho$ the Bernoulli measure $B(p,1-p)$ on $\{0,1\}^{\Z}$. By Theorem~2.10 in \cite{Co-Do-Se}, it follows that we can find $y\in\{0,1\}^{\N}$ such that $(y,\lio)$ is a generic point for $\rho\ot B(1/2,1/2)$. Then set
\beq\label{spryt}
\bfu(n)=\lio(n)\text{ if }y(n)=1\text{, and }\bfu(n)=-1\text{ if } y(n)=0.\eeq
If $f\colon\{0,1\}\times \{-1,1\}\to\{-1,1\}$ is defined by
$$
f(0,a)=-1\text{ and }f(1,a)=a$$
and $F\colon\{0,1\}^{\Z}\times\{-1,1\}^{\Z}\to\{-1,1\}^{\Z}$
is given by
$$
F((x(n)),(a(n))_{n\in\Z})=(f(x(n),a(n)))_{n\in\Z}$$
then $\bfu$ is a generic point for the measure $F_\ast(\rho\ot B(1/2,1/2))$ and (by independence) the latter measure is the Bernoulli measure $\kappa=B(\frac{1+p}2,\frac{1-p}2)$ on $\{-1,1\}^{\Z}$ (which, by choosing  $p$ close to~1, has as small entropy as we need).

Finally, $\bfu$ and $\lio$ do correlate since $f(x,a)=a$ with probability $\frac12+\frac{1-p}2$, whence the frequency of 1s on $(\bfu(n)\lio(n))$ is at least $\frac12+\frac{1-p}2$.
\end{proof}

%\begin{Remark} Even though in Corollary~\ref{c11} we deal with %systems of ``sufficiently large'' entropy, if a system $(X,T)$ has %positive entropy then
%for some $k\geq1$, the system $(X,T^k)$  has sufficiently large %entropy so that we can apply  Corollary~\ref{p1j}, hence the strong %MOMO property (relative to $\bfu$) will be absent for $T^k$.
%\end{Remark}

The following technical lemma  shows what happens if we contradict the condition from the definition of the strong MOMO property.
\begin{Lemma}\label{l1}
Let $f\in C(X)$, $(x_k)\subset X$, $(b_k)\subset\N$, $b_{k+1}-b_k\to\infty$. Assume that
$$
\limsup_{K\to\infty}\frac1{b_{K+1}}\sum_{k\leq K}\left|\sum_{b_k\leq n<b_{k+1}}f(T^nx_k)\bfu(n)\right|>0.
$$
Then there exist $\vep_0>0$ and a collection $\{ [a_k,c_k)\subset \N : k\geq 1\}$ of disjoint intervals with $c_k-a_k\to\infty$ and $\overline{d}(\bigcup_{k\geq 1}[a_k,c_k))>0$ such that
$$
\frac1{c_k-a_k}\left|\sum_{n=0}^{c_k-a_k-1}f(T^nx_k)\bfu(a_k+n)\right|\geq\vep_0\text{ for each }k\geq1.
$$
\end{Lemma}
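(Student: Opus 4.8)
The plan is to reformulate the hypothesis as a statement about a $\delta_k$-weighted average of the normalized block sums, and then to isolate, by a splitting (pigeonhole) argument, those blocks whose normalized contribution stays bounded away from zero. Write $\delta_k:=b_{k+1}-b_k$ for the length of the $k$-th block $[b_k,b_{k+1})$, so that $\delta_k\to\infty$ and $b_{K+1}=\sum_{k\le K}\delta_k$. Put
$$
\Sigma_k:=\sum_{b_k\le n<b_{k+1}}f(T^nx_k)\bfu(n),\qquad r_k:=\frac{|\Sigma_k|}{\delta_k}.
$$
Since $f$ is continuous on the compact space $X$ and $\bfu$ is bounded here (we are in the setting $|\bfu|\le L$), there is a constant $C:=\|f\|_\infty\sup_n|\bfu(n)|$ with $r_k\le C$ for all $k$. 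With this notation the hypothesis reads
$$
\alpha:=\limsup_{K\to\infty}\frac{1}{b_{K+1}}\sum_{k\le K}|\Sigma_k|
=\limsup_{K\to\infty}\frac{\sum_{k\le K}r_k\delta_k}{\sum_{k\le K}\delta_k}>0,
$$
that is, the $\delta_k$-weighted average of the numbers $r_k$ has positive upper limit.

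Next I would fix the threshold $\vep_0:=\alpha/2$ and split each partial average according to whether $r_k\ge\vep_0$ or $r_k<\vep_0$. The small-coefficient part contributes at most $\vep_0$, while on the large-coefficient part I bound $r_k\le C$; this gives, for every $K$,
$$
\frac{1}{b_{K+1}}\sum_{k\le K}r_k\delta_k
\le \vep_0+C\,\frac{1}{b_{K+1}}\sum_{\substack{k\le K\\ r_k\ge\vep_0}}\delta_k.
$$
Taking $\limsup_K$ of both sides and using that the left-hand side has upper limit $\alpha$ yields
$$
\limsup_{K\to\infty}\frac{1}{b_{K+1}}\sum_{\substack{k\le K\\ r_k\ge\vep_0}}\delta_k\ \ge\ \frac{\alpha}{2C}>0.
$$
Setting $S:=\{k:\ r_k\ge\vep_0\}$ and $U:=\bigcup_{k\in S}[b_k,b_{k+1})$, the expression on the left is exactly $\tfrac{1}{b_{K+1}}\#\bigl(U\cap[0,b_{K+1})\bigr)$; since the upper density $\overline d(U)$ is the $\limsup$ over \emph{all} $N$ of $\tfrac1N\#(U\cap[0,N))$, restricting $N$ to the subsequence $(b_{K+1})_K$ can only decrease the value, so $\overline d(U)\ge\alpha/(2C)>0$.

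Finally I would enumerate $S=\{k_1<k_2<\cdots\}$, which is necessarily infinite since a finite $S$ would force $\overline d(U)=0$. For the $j$-th selected block I set $a_j:=b_{k_j}$, $c_j:=b_{k_j+1}$, together with the relabelled point $\hat x_j:=T^{b_{k_j}}x_{k_j}$; this shift is the harmless conversion that turns the iterates $f(T^nx_{k_j})$, $b_{k_j}\le n<b_{k_j+1}$, appearing in $\Sigma_{k_j}$ into the iterates $f(T^{n}\hat x_j)$, $0\le n<c_j-a_j$, required in the conclusion. The intervals $[a_j,c_j)$ are pairwise disjoint as distinct original blocks, their union is $U$ (hence of positive upper density), and $c_j-a_j=\delta_{k_j}\to\infty$ because $k_j\to\infty$ and $\delta_k\to\infty$. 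Moreover
$$
\frac{1}{c_j-a_j}\Bigl|\sum_{n=0}^{c_j-a_j-1}f(T^n\hat x_j)\bfu(a_j+n)\Bigr|=\frac{|\Sigma_{k_j}|}{\delta_{k_j}}=r_{k_j}\ge\vep_0,
$$
which, after renaming $\hat x_j$ as $x_j$, is exactly the desired conclusion. The only genuinely non-formal step is the passage from a positive weighted average to positive upper density, and it hinges on the uniform bound $r_k\le C$: without the boundedness of $\bfu$ and $f$, a single block carrying an exceptionally large sum could keep the average positive while occupying negligible density, and the conclusion would break down. Everything else is bookkeeping — the $k\le K$ versus $k<K$ indexing (differing by one boundary block), the comparison between the $\limsup$ along $(b_{K+1})$ and the full upper density, and the absolute-versus-relative relabelling $\hat x_j=T^{b_{k_j}}x_{k_j}$.
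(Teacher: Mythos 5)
Your proof is correct and takes essentially the same route as the paper's: the paper's key observation (for $0\le z_k\le M$ with $\sum_{k\le K}\alpha_k z_k\ge\vep_0$ one gets $\sum_{k\le K,\,z_k\ge\vep_0/2}\alpha_k\ge\vep_0/(2M)$, applied with $z_k=r_k$ and $\alpha_k=\delta_k/b_{K_\ell+1}$) is exactly your splitting inequality, i.e.\ a weighted Markov/Chebyshev pigeonhole isolating the blocks with normalized sum above a fixed threshold, followed by the same passage to upper density along the subsequence $(b_{K+1})$. Your explicit relabelling $\hat{x}_j=T^{b_{k_j}}x_{k_j}$ and your remark that boundedness of $f$ and $\bfu$ is what makes the density step work are left implicit in the paper but change nothing in substance.
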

\begin{proof}
We begin the proof by the following simple observation: for $0\leq F\in L^\infty\xbm $ and $\vep_0\leq\int F\,d\mu$, we have
$$
\vep_0\leq \int_{[F\leq \vep_0/2]}F\,d\mu+\int_{[F\geq\vep_0/2]}F\,d\mu\leq\vep_0/2+\|F\|_\infty\mu(\{x\in X:F(x)\geq \vep_0/2\}),
$$
whence
$$
\mu(\{x\in X:\: F(x)\geq \vep_0/2\})\geq \vep_0/(2\|F\|_\infty).
$$
Fix $(z_k)_{k\geq 1}$ and $(\alpha_k)_{k\geq 1}$ with $0\leq z_k\leq M$ and $\sum_{k\geq 1}\alpha_k=1$ and suppose that, for some $K\geq 1$, $\sum_{k\leq K}\alpha_kz_k\geq \vep_0$. Set $y_k:=z_k$ for $k\leq K$ and $y_k:=0$ for $k>K$. Then clearly $\sum_{k\geq 1}\alpha_ky_k=\sum_{k\leq K}\geq \vep_0$ and we can apply the above observation to obtain
\begin{equation}\label{m3j}
\sum_{k\leq K, z_k\geq \vep_0/2}\alpha_k=\sum_{k\leq K, y_k\geq \vep_0/2}\alpha_k\geq \vep_0/(2M).
\end{equation}
Let now $\vep_0>0$ and $(K_\ell)_{\ell\geq 1}$ be such that
\begin{equation}\label{m4j}
\frac1{b_{K_\ell+1}}\sum_{k\leq K_\ell}\left|\sum_{b_k\leq n<b_{k+1}}f(T^nx_k)\bfu(n)\right|\geq\vep_0.
\end{equation}
Set  $z_k:=\left|\frac1{b_{k+1}-b_k}\sum_{b_k\leq n<b_{k+1}}f(T^nx_k)\bfu(n)\right|$ and $\alpha_k:=\frac{b_{k+1}-b_k}{b_{K_\ell+1}}$. Then~\eqref{m4j} takes the form
$$
\sum_{k\leq K_\ell}\alpha_kz_k\geq \vep_0.
$$
Since
$$
\overline{d}\Big(\bigcup_{k\geq 1,z_k\geq \vep_0/2}[b_{k},b_{k+1}) \Big)\geq \limsup_{\ell\to \infty}\frac{1}{K_\ell}\sum_{k\leq K_\ell,z_k\geq \vep_0/2}(b_{k+1}-b_k),
$$
it follows from~\eqref{m3j} that
$$
\overline{d}\Big(\bigcup_{k\geq 1,z_k\geq \vep_0/2}[b_{k},b_{k+1}) \Big)\geq \vep_0/(2M).
$$
Thus, we obtain a sequence $[a_k,c_k)$, $k\geq1$, of disjoint intervals of the form $[b_{k_s},b_{k_s+1})$ such that
$$
\overline{d}\Big(\bigcup_{k\geq 1}[a_{k},c_{k}) \Big)\geq \vep_0/(2M)>0
$$
and
$$
\left|\!\frac1{c_{k}-a_k}\!\sum_{a_k\leq n<c_{k}}\!\!f(T^nx_k)\bfu(n)\!\right|\geq\vep_0/2.
$$
This completes the proof.
\end{proof}
Equation~\eqref{m1} immediately follows from the above lemma.

\section{Lifting strong MOMO to extensions}

In this section, we continue our considerations on orthogonality to bounded \deleted{by~1} multiplicative functions $\bfu$ satisfying~\eqref{eq:Mobius-like}; in particular, we can take $\bfu=\mob$.

\subsection{Lifting strong MOMO to Rokhlin extensions}
We start from a uniquely ergodic system $(X,T)$ enjoying the strong MOMO property, and we denote by $\mu_X$ the unique $T$-invariant probability measure. We consider a continuous extension $\bar T$ of $T$ to some product space $X\times Y$; $Y$ is also a compact metric space, and $\bar T$ is a continuous transformation of $X\times Y$ which has the form $\bar T(x,y)=(Tx,S_x(y))$. We also assume that $\bar T$ is uniquely ergodic, its unique invariant measure having the form $\mu_X\otimes \mu_Y$ for some probability measure $\mu_Y$ on $Y$.
Our purpose is to give a sufficient condition for the strong MOMO property to hold in the extension $(X\times Y,\bar T)$. The condition we give can be seen as a form of relative disjointness of $\bar T^r$ and $\bar T^s$ over the base system (for large different prime integers $r$ and $s$), and the proof relies on the same kind of arguments as in the proof of \ref{newP2} $\Longrightarrow$ \ref{P3}.

\begin{Th}\label{t:SMext}
  Suppose that, for all large enough prime numbers $r\neq s$, the following holds: each probability measure on $(X\times Y)\times (X\times Y)$ which is invariant and ergodic under the action of $\bar T^r\times \bar T^s$ is, up to a natural permutation of coordinates, of the form $\rho\otimes\mu_Y\otimes\mu_Y$, where $\rho$ is some $T^r\times T^s$-invariant measure on $X\times X$. Then the strong MOMO property also holds in $(X\times Y,\bar T)$.
\end{Th}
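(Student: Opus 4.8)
The plan is to split the strong MOMO expression for $(X\times Y,\bar T)$ into a ``base'' contribution, controlled by the strong MOMO property already available for $(X,T)$, and a ``fibre'' contribution, where the disjointness hypothesis on $\bar T^r\times\bar T^s$ enters through the KBSZ criterion. Fix $F\in C(X\times Y)$, an increasing sequence $0=b_0<b_1<\cdots$ with $b_{k+1}-b_k\to\infty$, and points $w_k=(x_k,y_k)\in X\times Y$. Set
\[
\Phi(x):=\int_Y F(x,y)\,d\mu_Y(y),\qquad G(x,y):=F(x,y)-\Phi(x).
\]
Since $F$ is uniformly continuous and $\mu_Y$ is fixed, $\Phi\in C(X)$, so both $(x,y)\mapsto\Phi(x)$ and $G$ are continuous on $X\times Y$; moreover $\int_Y G(x,y)\,d\mu_Y(y)=0$ for every $x\in X$. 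By the triangle inequality the strong MOMO expression for $F$ is bounded by the one for $(x,y)\mapsto\Phi(x)$ plus the one for $G$, so it suffices to treat these summands separately.

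For the first summand, $\Phi(\bar T^{n-b_k}w_k)=\Phi(T^{n-b_k}x_k)$ depends only on the base coordinate, so the corresponding expression is \emph{exactly} the strong MOMO expression of $(X,T)$ for the function $\Phi\in C(X)$ and the moving orbits $(x_k)$, and hence tends to $0$ by the assumed strong MOMO property of $(X,T)$. It remains to prove strong MOMO for the fibre-mean-zero function $G$, and here I would follow verbatim the reduction in the proof of \ref{newP2} $\Longrightarrow$ \ref{P3}: using Lemma~\ref{lemma:cone} one rotates each inner block sum by a cube root of unity $e_k\in\A$ to replace absolute values by a signed sum, then forms the auxiliary system $\bigl((X\times Y)\times\A\bigr)^{\N}$ with the shift $T_{\rm aux}$ and the point $\xi$ defined by $\xi_n=(\bar T^{n-b_k}w_k,e_k)$ for $b_k\le n<b_{k+1}$. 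Writing $g\bigl((w_0,a_0),(w_1,a_1),\dots\bigr):=a_0\,G(w_0)$, the desired convergence for $G$ becomes the Sarnak property $\frac1N\sum_{n<N}g(T_{\rm aux}^n\xi)\bfu(n)\to0$. As in that proof, unique ergodicity of $\bar T$ forces every measure for which $\xi$ is quasi-generic to be a convex combination of pushforwards of $\mu_X\otimes\mu_Y$, each isomorphic to $(X\times Y,\mu_X\otimes\mu_Y,\bar T)$.

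The core step, and the only place where the disjointness hypothesis is used, is to establish this Sarnak property via the KBSZ criterion (Proposition~\ref{p:kbsz}) applied to $a_n:=g(T_{\rm aux}^n\xi)$. Fix large primes $r\neq s$, pass to a subsequence along which the empirical measures $\frac1N\sum_{n<N}\delta_{(T_{\rm aux}^{rn}\xi,\,T_{\rm aux}^{sn}\xi)}$ converge to some $T_{\rm aux}^r\times T_{\rm aux}^s$-invariant $\omega$, and decompose $\omega$ into ergodic components. Projecting onto the two leading coordinates, each component pushes to an ergodic measure on $(X\times Y)^2\times\A^2$ invariant under $(\bar T^r\times\bar T^s)\times(I\times I)$; since the $\A$-action is trivial, ergodicity forces it to be $\gamma\otimes\delta_{(a,a')}$ with $\gamma$ an ergodic $\bar T^r\times\bar T^s$-invariant measure on $(X\times Y)^2$. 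By hypothesis (for $r,s$ large) $\gamma=\rho\otimes\mu_Y\otimes\mu_Y$ up to the natural permutation of coordinates, whence
\[
\int g\otimes\overline g\,d(\gamma\otimes\delta_{(a,a')})=a\overline{a'}\int_{X\times X}\Bigl(\int_Y G(x,y)\,d\mu_Y(y)\Bigr)\overline{\Bigl(\int_Y G(x',y')\,d\mu_Y(y')\Bigr)}\,d\rho(x,x')=0,
\]
because $G$ has zero fibre-average. Thus the off-diagonal correlations in~\eqref{eq:kbsz0} vanish for all large prime pairs, KBSZ yields the Sarnak property for $g$, and undoing the reduction (Lemma~\ref{lemma:cone} and the block structure of $\xi$) gives strong MOMO for $G$. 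Together with the base contribution this proves strong MOMO for $(X\times Y,\bar T)$.

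The main obstacle is precisely this interplay: because $(X,T)$ is only assumed to have strong MOMO and \emph{not} AOP, the base part $\Phi$ cannot be handled by the joining hypothesis and must be peeled off and absorbed into the hypothesis on $(X,T)$, while the disjointness hypothesis only becomes useful once the integrand has vanishing fibre-average. The delicate bookkeeping is in verifying that the ergodic components of $\omega$ genuinely project to product measures over the base to which the hypothesis applies, and that the cancellation $\int G\otimes\overline G\,d\gamma=0$ survives the ``permutation of coordinates'' allowed in the statement.
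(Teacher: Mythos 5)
Your proof is correct and follows essentially the same route as the paper's: the cube-roots-of-unity trick via Lemma~\ref{lemma:cone}, the auxiliary shift system on $(X\times Y\times\A)^{\N}$, the KBSZ criterion, and the hypothesis on ergodic $\bar T^r\times\bar T^s$-invariant measures (together with disjointness of ergodicity from the identity to handle the $\A$-coordinates) to make the correlations vanish. The only deviation is cosmetic: where the paper reduces to product functions $f_1\otimes f_2$ (linearly dense in $C(X\times Y)$, sufficient by unique ergodicity) and centers $f_2$, you center fibrewise, writing $F=\Phi+G$ with $\Phi(x)=\int_Y F(x,\cdot)\,d\mu_Y$ and absorbing $\Phi$ into the strong MOMO property of $(X,T)$ exactly as the paper absorbs the term $\bigl(\int_Y f_2\,d\mu_Y\bigr) f_1\otimes 1$ --- and your intermediate remark identifying the quasi-generic measures of $\xi$ is true but not actually needed once KBSZ is invoked.
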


\begin{proof}
We fix  an increasing sequence of integers $0=b_0<b_1<b_2<\cdots$ with $b_{k+1}-b_k\to\infty$, and
a sequence of points ${\bigl((x_k,y_k)\bigr)}_{k\ge0}$ in $X\times Y$. We also fix a continuous function $f$ on $X\times Y$, and we assume that $f$ is of the form $f=f_1\otimes f_2: \ (x,y)\mapsto f_1(x)f_2(y)$ where $f_1\in C(X)$ and $f_2\in C(Y)$. Considering continuous functions of this type on $X\times Y$ is enough for our purposes, since they generate a dense subspace in $C(X\times Y)$ and $\bar T$ is uniquely ergodic. We thus have to prove the convergence
\[
  \frac{1}{b_K} \sum_{k<K}  \left| \sum_{b_k\le n<b_{k+1}}
  f_1\otimes f_2\left( \bar T^{n-b_k}(x_k,y_k) \right) \bfu(n) \right|  \tend{K}{\infty} 0.
\]
We observe that, subtracting  $\int_Y f_2\, d\mu_Y$ from $f_2$ if necessary,
which does not affect the above limit by the strong MOMO property of $(X,T)$, we can always assume that
\begin{equation}
  \label{eq:f_2_centered}
  \int_Y f_2\, d\mu_Y=0.
\end{equation}

We again consider the set $\A:=\{1,e^{i2\pi/3},e^{i4\pi/3}\}$ of third roots of unity, and for each $k$, we choose $e_k\in\A$ such that
\[
  e_k \left(  \sum_{b_k\le n<b_{k+1}}
  f_1\otimes f_2\left( \bar T^{n-b_k}(x_k,y_k) \right) \bfu(n)   \right)
\]
belongs to the closed convex cone $\{0\}\cup\{z\in\C:\arg(z)\in[-\pi/3,\pi/3]\}$.
Then again by Lemma~\ref{lemma:cone}, it is enough to prove that
\begin{equation}
  \label{eq:convergencetoprove2}
  \frac{1}{b_K} \sum_{ k<K}  \sum_{b_k\le n<b_{k+1}}  e_k f_1\otimes f_2\left( \bar T^{n-b_k}(x_k,y_k) \right) \bfu(n)  \tend{K}{\infty} 0.
\end{equation}

Now, we introduce the space $Z:=(X\times Y\times \A)^\N$, on which acts the shift map $S$.
We define a
particular point $z\in Z$ by setting $z_n:=( \bar T^{n-b_k}(x_k,y_k),e_k)$ whenever $b_k\le n<b_{k+1}$, and we consider the continuous function $F$ on $Z$ defined by
\[
  F\Bigl( (u_0,v_0,a_0),(u_1,v_1,a_1),\ldots  \Bigr) := a_0f_1(u_0)f_2(v_0).
\]
Then, to get~\eqref{eq:convergencetoprove2}, it is enough to establish the orthogonality of $\bfu$ and $\left(F(S^nz)\right)$, i.e.
\begin{equation}
  \label{eq:convergencetoprove3}
  \frac{1}{N} \sum_{n< N} F(S^nz) \bfu(n) \tend{N}{\infty} 0.
\end{equation}
Using the KBSZ criterion, the above holds as soon as, for all large enough different prime numbers $r$ and $s$, we have
\begin{equation}
  \label{eq:convergencetoprove4}
  \frac{1}{N} \sum_{ n< N} F(S^{rn}z) \overline{F}(S^{sn}z)  \tend{N}{\infty} 0.
\end{equation}

Let $(N_i)$ be an increasing sequence of positive integers along which the sequence of empirical measures
\[
  \frac{1}{N_i} \sum_{n<N_i} \delta_{(S^{rn}z,S^{sn}z)},\;i\geq1,
\]
converges  to some $S^r\times S^s$-invariant probability measure $\mu_{Z\times Z}$ on $Z\times Z$.
We therefore have
\[
  \frac{1}{N_i} \sum_{ n< N_i} F(S^{rn}z) \overline{F}(S^{sn}z)  \tend{i}{\infty} \int_{Z\times Z} F\otimes \overline{F}\, d\mu_{Z\times Z}.
\]

Since $b_{k+1}-b_k\to\infty$, the measure $\mu_{Z\times Z}$ is concentrated on the set of pairs $(w,w')\in Z^2$ whose coordinates are of the form
\[ w=\Bigl( (u,v,a),(\bar T(u,v),a), (\bar T^2(u,v),a),\ldots\Bigr)\]
and
\[ w'=\Bigl( (u',v',a'),(\bar T(u',v'),a'), (\bar T^2(u',v'),a'),\ldots\Bigr)\]
for some $u,u'$ in $X$, some $v,v'$ in $Y$, and some $a,a'$ in $\A$.
By the invariance of $\mu_{Z\times Z}$ under $S^r\times S^s$, the marginal of $\mu_{Z\times Z}$ given by the coordinates $\bigl((u,v),(u',v')\bigr)$ is $\bar T^r \times \bar T^s$-invariant. Hence, if instead of $\mu_{Z\times Z}$ we consider an ergodic component $\gamma$ of $\mu_{Z\times Z}$, the assumption of the theorem implies that the marginal given by the coordinates $\bigl((u,v),(u',v')\bigr)$ is of the form $\rho\otimes\mu_Y\otimes\mu_Y$ for some probability measure $\rho$ on $X\times X$. Moreover, using again the disjointness of ergodicity and identity, we see that under $\gamma$ the coordinates $(a,a')$ are independent of $\bigl((u,v),(u',v')\bigr)$. If we denote by $\gamma_{\A\times \A}$ the marginal defined on $\A\times\A$ by the coordinates $(a,a')$, we thus have
\begin{multline*}
\int_{Z\times Z} F\otimes \overline{F}\, d\gamma = \int_{\A\times \A} a\overline{a'} d\gamma_{\A\times A}(a,a') \int_{X\times X} f_1(u)\overline{f_1}(u')\,d\rho(u,u')
 \\
\times \int_Y f_2(v)\, d\mu_Y(v)\int_Y \overline{f_2}(v')\, d\mu_Y(v'),
\end{multline*}
hence this integral vanishes by~\eqref{eq:f_2_centered}. Since this is true for all ergodic components of $\mu_{Z\times Z}$, we get
\[
  \lim_{i\to\infty}\frac{1}{N_i} \sum_{ n< N_i} F(S^{rn}z) \overline{F}(S^{sn}z)  = \int_{Z\times Z} F\otimes \bar F d\mu_{Z\times Z}=0,
\]
so~\eqref{eq:convergencetoprove4} follows and the proof is complete.
\end{proof}

\subsection{Sarnak's conjecture for continuous extensions by coboundaries}
In the theorem below we consider homeomorphisms for which the measure-theoretic systems determined by ergodic invariant measures are all isomorphic. However, the set of ergodic invariant measures is uncountable which makes a direct use of~\ref{thmAPm} questionable. On the other hand, this set is quite structured which allows one to repeat the main steps of the proof of the implication  \ref{newP1} $\Longrightarrow$ \ref{newP2}.

\begin{Th}\label{t:meascob}
Suppose that $(Y,S)$ is uniquely ergodic and satisfies the strong MOMO property [relatively to $\bfu$]. Let $G$ be a compact Abelian group (with Haar measure $\la_G$) and let $\varphi\colon Y\to G$ be continuous with $\varphi=\psi\circ S-\psi$, where $\psi\colon Y\to G$ is measurable. Then $(Y\times G,S_\varphi)$, $S_\varphi(x,g)=(Sx,\varphi(x)+g)$, satisfies the Sarnak property [relatively to $\bfu$].
\end{Th}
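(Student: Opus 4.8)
The plan is to mirror the proof of \ref{newP1} $\Longrightarrow$ \ref{newP2}, exploiting that the ergodic $S_\varphi$-invariant measures projecting onto $\nu$ form the single orbit $\{\sigma_g\}_{g\in G}$ of mutually ``parallel'' graphs $\Gamma_g=\{(y,g+\psi(y))\}$, each isomorphic to $(Y,\cb(Y),\nu,S)$ via the projection $\pi\colon Y\times G\to Y$. Since $\bfu$ is bounded (say $|\bfu|\le1$) and the functions $f\otimes\chi\colon(y,g)\mapsto f(y)\chi(g)$ with $f\in C(Y)$, $\chi\in\widehat G$ span a uniformly dense subalgebra of $C(Y\times G)$, it suffices to prove \eqref{eq:defSarnak} for $F=f\otimes\chi$. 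When $\chi$ is trivial this is just the Sarnak property of $(Y,S)$ (implied by strong MOMO), so I fix a nontrivial $\chi$ and a point $(x_0,g_0)$; using that $\varphi$ is continuous, so that $\varphi^{(n)}(x_0):=\sum_{\ell<n}\varphi(S^\ell x_0)$ is defined for \emph{every} $x_0$, I write
\[
\frac1N\sum_{n<N}F\bigl(S_\varphi^n(x_0,g_0)\bigr)\bfu(n)=\chi(g_0)\,\frac1N\sum_{n<N}f(S^nx_0)\,\chi\bigl(\varphi^{(n)}(x_0)\bigr)\bfu(n),
\]
and the task is to show the right-hand average tends to $0$.

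The decisive point—the place where the merely measurable coboundary must be tamed—is to arrange that along the orbits we compare with, the relation $\varphi=\psi\circ S-\psi$ telescopes \emph{exactly}. Let $Y_0:=\{y:\varphi(y)=\psi(Sy)-\psi(y)\}$, a co-null set, and put $Y_0^\ast:=\bigcap_{n\ge0}S^{-n}Y_0$. Then $Y_0^\ast$ is co-null and forward invariant, and for every $y\in Y_0^\ast$ one has $\chi(\varphi^{(s)}(y))=\xi(S^sy)\overline{\xi(y)}$ for all $s\ge0$, where $\xi:=\chi\circ\psi$. By Lusin's theorem I choose a compact $W\subset Y_0^\ast$ with $\nu(W)>1-\varepsilon^4$ on which $\psi$ (hence $\xi$) is continuous, so that $h:=f\xi$ is continuous on $W$; by Tietze I extend $h|_W$ to $\tilde h\in C(Y)$ with $\|\tilde h\|_\infty\le\|f\|_\infty$. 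The gain is that any comparison point taken in $W$ has its entire forward orbit in $Y_0$, so no ``jump'' in the multiplicative cocycle $\chi\circ\varphi^{(\cdot)}$ can occur.

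From here the argument copies \ref{newP1} $\Longrightarrow$ \ref{newP2}. Define the compact good sets $B(L):=\{w\in W:\frac1L\sum_{\ell<L}\raz_W(S^\ell w)>1-\varepsilon^2\}$ (whence $\nu(B(L))>1-\varepsilon$), and choose $\eta(L)$ so small that $d(y,y')<\eta(L)$ forces both $|f(S^sy)-f(S^sy')|<\varepsilon$ and $|\chi(\varphi^{(s)}(y))-\chi(\varphi^{(s)}(y'))|<\varepsilon$ for all $s<L$ (possible since each $\varphi^{(s)}$ is continuous and $\chi$ uniformly continuous). Because $(Y,S)$ is uniquely ergodic, $x_0$ is generic for $\nu$, and the density lemma is immediate: comparing $\raz_{\{d(\cdot,B(L))\ge\eta(L)\}}$ with a continuous majorant gives $\limsup_N\frac1N\#\{n<N:d(S^nx_0,B(L))\ge\eta(L)\}<\varepsilon$. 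I then build, exactly as there, scales $L_i\uparrow\infty$, times $M_i$ realising the $\limsup$ of our average and satisfying the density bound on each $[M_{i-1},M_i)$ together with $L_i<\varepsilon M_i$, and a block sequence $0=b_0<b_1<\cdots$ of ``good'' integers with $b_{k+1}-b_k\to\infty$. At a good time $b_k$ there is $x_k\in B(L_i)\subset W\subset Y_0^\ast$ with $d(S^{b_k}x_0,x_k)<\eta(L_i)$; for the $\ge(1-\varepsilon^2)L_i$ indices $s<L_i$ with $S^sx_k\in W$, the choice of $\eta(L_i)$, the exact telescoping at $x_k$, and $h=\tilde h$ on $W$ give the analogue of \eqref{eq:window}–\eqref{eq:between}:
\[
F\bigl(S_\varphi^{b_k+s}(x_0,g_0)\bigr)=\tilde a_k\,\tilde h(S^sx_k)+O(\varepsilon),\qquad \tilde a_k:=\chi\bigl(g_0+\varphi^{(b_k)}(x_0)\bigr)\overline{\xi(x_k)},\ |\tilde a_k|=1.
\]
Summing against $\bfu$ and replacing $S_{M_i}$ by $E_K:=\sum_{k<K}\tilde a_k\sum_{b_k\le n<b_{k+1}}\tilde h(S^{n-b_k}x_k)\bfu(n)$ costs at most $C\varepsilon M_i$, exactly as before, whence, using $|\tilde a_k|=1$,
\[
\limsup_N\frac1N\Bigl|\sum_{n<N}F(S_\varphi^n(x_0,g_0))\bfu(n)\Bigr|\le\limsup_K\frac1{b_K}\sum_{k<K}\Bigl|\sum_{b_k\le n<b_{k+1}}\tilde h(S^{n-b_k}x_k)\bfu(n)\Bigr|+C\varepsilon,
\]
and the first term vanishes by the strong MOMO property of $(Y,S)$ applied to $\tilde h$, $(b_k)$ and $(x_k)$; letting $\varepsilon\to0$ finishes the proof.

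I expect the main obstacle to be precisely the step isolated in the second paragraph: since $\psi$ is only measurable, the cocycle identity can fail on a null set that the orbit of a badly chosen $(x_0,g_0)$ might meet with positive upper density, and for a multiplicative ($G$-valued) cocycle even one such failure inside a block destroys the comparison. Passing to the forward-invariant co-null set $Y_0^\ast$ and placing the Lusin windows inside it is what removes this difficulty; the remaining bookkeeping (good sets, $\eta(L)$, the density lemma, and the block accounting) is a routine transcription of \ref{newP1} $\Longrightarrow$ \ref{newP2}, with the harmless unimodular factors $\tilde a_k$ disappearing under the absolute values.
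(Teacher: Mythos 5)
Your proof is correct, and although it runs on the same skeleton as the paper's argument (Lusin window, Tietze extension, good sets $B(L)$, a density estimate, the block construction from \ref{newP1} $\Longrightarrow$ \ref{newP2}, and the strong MOMO property of $(Y,S)$ applied at the end, with unimodular per-block constants disappearing under the absolute values), it implements the central step by a genuinely different route. The paper stays on the product $Y\times G$: Lusin is applied to the map $Id\times\psi$, the windows are the graph translates $\widetilde{K}_h=\widetilde{K}_0+h$, the comparison functions form the family $H_h=\chi(h)H_0$ (the factor $\chi(h_k)$ playing exactly the role of your $\tilde a_k$), and --- crucially --- since $(Y\times G,S_\varphi)$ is not uniquely ergodic, the density lemma \eqref{eq:density_cob} for an arbitrary point $(\overline{y},\overline{g})$ needs the fact that every measure for which this point is quasi-generic has the form $\int_G\widetilde{\nu}_g\,dP(g)$, which rests on the classification of the ergodic $S_\varphi$-invariant measures as $\{\widetilde{\nu}_g:g\in G\}$ cited from \cite{Ke-Ne}. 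You instead quotient to the base: continuity of $\varphi$ lets you track the fiber coordinate deterministically via $F\bigl(S_\varphi^n(x_0,g_0)\bigr)=\chi(g_0)f(S^nx_0)\chi\bigl(\varphi^{(n)}(x_0)\bigr)$, so the only difficulty is the merely measurable telescoping, and your forward-invariant co-null set $Y_0^\ast$ with the Lusin window $W\subset Y_0^\ast$ is a sound fix --- indeed it makes explicit what is hidden in the paper's window condition \eqref{eq:window_cob_h}, where $S_\varphi^s(y,g)\in\widetilde{K}_h$ forces $\varphi^{(s)}(y)=\psi(S^sy)-\psi(y)$ in addition to $S^sy\in K$. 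What your route buys: the density lemma becomes trivial (every point of the uniquely ergodic base is generic for $\nu$, so a continuous majorant suffices), no ergodic-decomposition input is needed, and the constants $\tilde a_k=\chi\bigl(g_0+\varphi^{(b_k)}(x_0)\bigr)\overline{\xi(x_k)}$ are explicit; your choice of $\eta(L)$, using uniform continuity of the finitely many continuous maps $\chi\circ\varphi^{(s)}$, $s<L$, is the exact analogue of \eqref{eq:equicontinuity_cob}. What the paper's formulation buys is fidelity to the \ref{thmAPm} machinery: it treats the uncountable but structured family of ergodic graph measures head-on, which is the template the authors advertise for situations where a direct appeal to the Main Theorem fails; your base-reduction, by contrast, exploits the special group-extension structure (the fiber motion is a deterministic function of the base orbit), which is precisely available here.
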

\begin{proof}
First, we need to introduce some notation. For $g\in G$, let $\widetilde{A}_g$ be the graph of $\psi+g$, i.e.\ $\widetilde{A}_g:=\{(y,\psi(y)+g): y\in Y\}$ and let $\pi_g\colon \widetilde{A}_g\to Y$ stand for the projection onto the first coordinate. Let $\widetilde{\nu}_g:=\left(\pi^{-1}_g\right)_\ast\nu$, where $\nu$ is the unique invariant measure for $(Y,S)$. Then the ergodic decomposition of the product system $(Y\times G,\nu\ot\la_G, S_\varphi)$ is given by
$$
\nu\otimes \lambda_G=\int \widetilde{\nu}_g\, d\lambda_G(g).
$$
But what is more important here is that  $\{ \widetilde{\nu}_g:\:g\in G\}$ is the set of $S_\varphi$-invariant ergodic measures, see e.g.\ \cite{Ke-Ne}.
Since $(Y,\cb(Y),\nu,S)\simeq (Y\times G,\cb(Y\times G),\widetilde{\nu}_0,S_\varphi)$ via the map $Id \times \psi \colon Y\to Y\times G$, it follows by Lusin's theorem that there exists a compact set $K\subset Y$ such that $\nu(K)>1-\vep^4$ and such that the restriction of $Id\times \psi$ to $K$ is continuous. Then $\widetilde{K}_0:=(Id\times \psi)(K)\subset \widetilde{A}_0$ is also compact. Define $\widetilde{K}_g:=\widetilde{K}_0+g$ and notice that $\bigcup_{g\in G}\widetilde{K}_g=K\times G$.

Now, fix $(\overline{y},\overline{g})\in Y\times G$. We will show that the Sarnak property holds in $(\overline{y},\overline{g})\in Y\times G$ by showing convergence~\eqref{eq:defSarnak} for functions of the form $F=f\otimes \chi$, where $f\in C(Y)$ and $\chi\in\widehat{G}$ is a character on $G$.\footnote{We recall that such functions form a set which is linearly dense in the uniform topology in $C(Y\times G)$.} Let $H_0$ be a continuous extension of $F\circ \pi_0^{-1}|_K$ to the whole space $Y$, such that $\|H_0\|_\infty= \|F\|_\infty$ (such $H_0$ exists by the Tietze extension theorem).  Let $H_h:=\chi(h)H_0$ for $h\in G$. Then $H_h$ is a continuous extension of $F\circ \pi_h^{-1}|_K$. Indeed, for $y\in K$, we have
\begin{multline*}
H_h(y)=\chi(h)H_0(y)=\chi(h)F\circ \pi_0^{-1}(y)=\chi(h)F(y,\psi(y))\\
=\chi(h)f(y)\chi(\psi(y))=\chi(h+\psi(y))f(y)=F(y,\psi(y)+h)=F\circ \pi_h^{-1}(y).
\end{multline*}
Notice that
\begin{multline}\label{eq:window_cob_h}
\text{if }(y,g)\in \widetilde{K}_h\text{ and }S_\varphi^s(y,g)\in \widetilde{K}_h\text{ then }\\
F(S_{\varphi}^s(y,g))=(F\circ \pi_h^{-1})( \pi_h(S_\varphi^s(y,g)))\\
=H_h(\pi_h(S_\varphi^s(y,g)))=H_h(S^s(y)).
\end{multline}

For $L\geq 1$, define the following compact subset of $\widetilde{K}_0$:
$$
B_0(L):=\left\{(y,g) \in \widetilde{K}_0 : \frac{1}{L}\sum_{l\leq L}\ind{\widetilde{K}_0}(S_\varphi^s(y,g))>1-\vep^2\right\}.
$$
In the same way, we define $B_h(L)$ for $h\in G$. It follows by a straightforward calculation that $B_h(L)=B_0(L)+h$ for each $h\in G$. Finally, define $B(L):=\bigcup_{h\in G}B_h(L)$. Clearly, $B(L)=\pi_0(B_0(L))\times G$, whence it is again a compact set. By repeating an argument from the proof of \ref{newP1} $\Longrightarrow$ \ref{newP2}, we obtain
$\widetilde{\nu}_h(B_h(L))>1-\vep$ for each $h\in G$.

Let $D(\cdot,\cdot)$ be the product distance on $Y\times G$. For each $L\geq 1$, define $\eta(L)>0$ such that for $(y,g),(y',g')\in Y\times G$, we have
\begin{multline}
\label{eq:equicontinuity_cob}
  D((y,g),(y',g'))<\eta(L)\Longrightarrow\\
|F(S_\varphi^n(y,g))-F(S_\varphi^n(y',g'))| < \varepsilon \text{ for all }0\le n<L.
\end{multline}

By repeating the proof of Lemma~\ref{lemma:density}, we obtain that
\begin{equation}\label{eq:density_cob}
\limsup_{N\to \infty}\frac{1}{N}\#\{0\leq n\leq N-1 : D(S_\varphi^n(y,g),B(L))\geq \eta(L)\}<\vep.\footnote{Indeed, if $(\overline{y},\overline{g})$ is quasi-generic, along a sequence $(N_i)$, for an $S_\varphi$-invariant measure then this measure has to be of  the form $\widetilde{\mu}=\int_G {\widetilde{\nu}_g}\, dP(g)$, where $P$ is a probability measure on $G$.}
\end{equation}

We now fix an increasing sequence of integers $1\le L_1<L_2<\cdots$. Repeat the arguments from the proof of \ref{newP1} $\Longrightarrow$ \ref{newP2}
to obtain sequences $(M_i)_{i\geq0}$ and $(b_k)_{k\geq0}$.

Now, fix $i\ge1$. Let $K\ge1$ be the largest integer such that $b_{K}\le M_i$. We want to approximate
the sum
\[ S_{M_i}:=\sum_{n<M_i} F(S_{\varphi}^n(\overline{y},\overline{g})) \bfu(n)\]
by the following expression coming from the dynamical system $(Y,S)$:
\[ E_K:=\sum_{ k<K} \sum_{b_k\le n<b_{k+1}} H_{h_k}  (S^{n-b_k} y_k) \bfu(n).\]
As in the proof of \ref{newP1} $\Longrightarrow$ \ref{newP2}, we obtain that
\begin{multline*}
   \limsup_{N\to\infty}\frac{1}{N} \left|\sum_{ n<N} F(S_\varphi^n(\overline{y},\overline{g}))\bfu(n)\right|
   =\lim_{i\to \infty} \frac{1}{M_i} \left|S_{M_i}\right|
   \le \limsup_{K\to\infty} \frac{1}{b_{K}}|E_K| + C\varepsilon
\end{multline*}
for some constant $C>0$. However
\begin{align*}
  \frac{1}{b_{K}}|E_K| &\le
  \frac{1}{b_{K}} \sum_{k<K} \left| \sum_{b_k\le n<b_{k+1}} H_{h_k}  (S^{n-b_k} y_k) \bfu(n) \right| \\
  & =   \frac{1}{b_{K}} \sum_{k<K} \left| \sum_{b_k\le n<b_{k+1}}\chi(h_k) H_0 (S^{n-b_k} y_k) \bfu(n) \right|\\
  &=   \frac{1}{b_{K}} \sum_{ k<K} \left| \sum_{b_k\le n<b_{k+1}} H_0 (S^{n-b_k} y_k) \bfu(n) \right|
\end{align*}
which goes to 0 as $K\to\infty$ by the strong MOMO property of $(Y,S)$. This completes the proof.
\end{proof}

\begin{Remark} \em See \cite{Li-Sa}, where there is the first  example of a continuous (in fact analytic) $f\colon\T\to\R$  considered over an irrational rotation $T$ such that $T_{e^{2\pi if}}$ is minimal, $f$ is a measurable coboundary and $T_{e^{2\pi if}}$ is M\"obius disjoint.  Cf.\ also~\cite{Wa} for M\"obius disjointness of all analytic Anzai skew products.
\end{Remark}

\section{Examples}
\subsection{Ergodic systems with discrete spectrum} \label{s:MOMOds} Recall that in \cite{Ab-Le-Ru1} it has been proved that all uniquely ergodic models of totally ergodic systems with discrete spectrum are M\"obius disjoint. The result has been extended to all uniquely ergodic models of all ergodic systems with discrete spectrum in \cite{Hu-Wa-Zh}.
By Halmos-von Neumann theorem it follows that if $(Z,\cd,\kappa,R)$ is an ergodic transformation with discrete spectrum then one of its uniquely ergodic models is a rotation
$Tx=x+x_0$, where $X$ is a compact, metric group and $\ov{\{nx_0:n\in\Z\}}=X$. If $\chi\in\widehat{X}$, $(b_k)\subset\N$ with $b_{k+1}-b_k\to\infty$ and $(x_k)\subset X$ then for each $(y_k)\subset X$, we have
$$
\frac1{b_K}\sum_{k<K}\sum_{b_k\leq n<b_{k+1}}\chi(T^n(x_k+y_k))\bfu(n)=\frac1{b_K}\sum_{k<K}\chi(y_k)\sum_{b_k\leq n<b_{k+1}}\chi(T^nx_k)\bfu(n).$$
It easily follows from Lemma~\ref{lemma:cone} that in $(X,T)$ we have the strong MOMO property whenever we have the MOMO property\deleted{there}. But
\beq\label{e:analiza}
\left|
\frac1{b_K}\sum_{k<K}\sum_{b_k\leq n<b_{k+1}}\chi(T^n(x_k))\bfu(n)\right|\leq
\frac1{b_K}\sum_{k<K}\left|\sum_{b_k\leq n<b_{k+1}}\left(\chi(x_0)\right)^n\bfu(n)\right|.\eeq
We have $\chi(x_0)=e^{2\pi i \alpha}$ for a unique $\alpha\in[0,1)$.
Two cases now arise:

{\bf Case 1.} If $\alpha$ is irrational then the RHS in~\eqref{e:analiza} goes to zero by \cite{Ab-Le-Ru1} for any $\bfu$ multiplicative, $|\bfu|\leq1$.

{\bf Case 2.} Assume that $\alpha$ is rational. Then it follows from Theorem~1.7 in \cite{Ma-Ra-Ta} that the RHS in~\eqref{e:analiza} goes to zero if $\bfu$ is multiplicative, $|\bfu|\leq1$ and
\beq\label{e:mrt}
\inf_{|t|\leq M,\xi~\text{mod}~q,q\leq Q}D(\bfu,n\mapsto \xi(n)n^{it};M)^2\to \infty,
\eeq
when  $10\leq H\leq M$, $H\to\infty$ and $Q=\min(\log^{1/125}M,\log^5 H)$; here $\xi$ runs over all Dirichlet characters of modulus $q\leq Q$ and
$$
D(\bfu,\bfv;M):=\left(\sum_{p\leq M, p\in\mathscr{P}}\frac{1-{\rm Re}(\bfu(p)\overline{\bfv(p)})}{p}\right)^{1/2}
$$
for each $\bfu,\bfv:\N\to\C$ multiplicative satisfying $|\bfu|,|\bfv|\leq1$.
In particular,~\eqref{e:mrt} implies~\eqref{eq:Mobius-like}. Moreover, classical multiplicative functions like $\mob$ and $\lio$ satisfy~\eqref{e:mrt} \cite{Ma-Ra-Ta}.

\begin{Cor}\label{c:SMds} Let $(Z,\cd,\kappa,R)$ be \deleted{a totally}\added{an} ergodic system with discrete spectrum. \added{If $R$ is totally ergodic,} then in each uniquely ergodic model of $R$ we have the strong MOMO property relatively to any  multiplicative function $\bfu$, $|\bfu|\leq1$, satisfying~\eqref{eq:Mobius-like}.
If \deleted{$R$ is ergodic and its spectrum}\added{the spectrum of $R$} has a nontrivial rational eigenvalue then the strong MOMO property holds for any $\bfu$ satisfying additionally~\eqref{e:mrt}.\end{Cor}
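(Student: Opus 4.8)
The plan is to reduce the statement to a single, completely explicit model of $R$ and then to a one-dimensional question about the cancellation of $\bfu$ against linear phases. First I would invoke the Halmos--von Neumann theorem to realise $(Z,\cd,\kappa,R)$ as a minimal rotation $Tx=x+x_0$ on a compact metric group $X$ with $\ov{\{nx_0:n\in\Z\}}=X$; this is one of the uniquely ergodic models of $R$. Since, by the Main Theorem~\ref{thmAPm}, property~\ref{newP1} is equivalent to property~\ref{P3}, it is enough to establish the strong MOMO property (relatively to $\bfu$) for this one rotation: once we have it, \ref{newP1} holds with $(Y,S)$ the rotation and $\nu$ its Haar measure, and \ref{P3} then propagates the strong MOMO property to \emph{every} uniquely ergodic model of $R$.

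Next I would cut down the class of test functions. Because $\bfu$ is bounded and $(X,T)$ is uniquely ergodic, by the remark following Definition~\ref{def:strongMOMO} it suffices to verify the convergence in~\eqref{eq:defMOMOSI} for $f$ ranging over the characters $\chi\in\widehat{X}$, which are linearly dense in $C(X)$; the passage from MOMO to strong MOMO for the rotation is supplied by Lemma~\ref{lemma:cone} together with the phase-shift identity $\chi(T^{n}(x_k+y_k))=\chi(y_k)\chi(T^{n}x_k)$. For a character the matter is essentially scalar: writing $\chi(x_0)=e^{2\pi i\alpha}$ one has $\chi(T^{n-b_k}x_k)=\chi(x_k)\chi(x_0)^{\,n-b_k}$, so every inner sum has modulus $\bigl|\sum_{b_k\le n<b_{k+1}}\chi(x_0)^n\bfu(n)\bigr|$, independent of $x_k$. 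Thus, exactly as in~\eqref{e:analiza}, the whole question collapses to showing that
\[
  \frac1{b_K}\sum_{k<K}\left|\sum_{b_k\le n<b_{k+1}} e^{2\pi i\alpha n}\bfu(n)\right|\tend{K}{\infty}0
\]
for every admissible sequence $(b_k)$.

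Then I would run the dichotomy on $\alpha$ and assemble the two parts according to the spectrum of $R$. If $\alpha$ is irrational, the required interval cancellation follows from \cite{Ab-Le-Ru1} (through the AOP/KBSZ mechanism, using that $e^{2\pi i\alpha(r-s)n}$ averages to $0$ for distinct primes $r\neq s$) for any multiplicative $\bfu$ with $|\bfu|\le1$; the trivial character, where $\alpha=0$, is handled directly by~\eqref{eq:Mobius-like}. If $\alpha$ is a nonzero rational, I would appeal to Theorem~1.7 of \cite{Ma-Ra-Ta}, whose hypothesis is precisely the pretentious-distance condition~\eqref{e:mrt}. Now, total ergodicity of $R$ is equivalent to the absence of nontrivial rational eigenvalues, i.e.\ to $\chi(x_0)$ being a nontrivial root of unity for no character $\chi$; hence for totally ergodic $R$ every nontrivial character lands in the irrational case and only~\eqref{eq:Mobius-like} is needed. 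If instead $R$ carries a nontrivial rational eigenvalue, the corresponding characters force the rational case, so imposing~\eqref{e:mrt} (which in particular implies~\eqref{eq:Mobius-like}) covers all characters simultaneously.

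The main obstacle is the rational case. The reductions to the rotation and to characters, and the irrational regime, are comparatively soft, but the decay of $\sum_{b_k\le n<b_{k+1}} e^{2\pi i(a/q)n}\bfu(n)$ on long (and on typical short) intervals is genuinely arithmetic: it rests on the short-interval estimates of Matomäki--Radziwiłł--Tao, with~\eqref{e:mrt} being exactly the condition that prevents $\bfu$ from pretending to be a twisted Dirichlet character $n\mapsto\xi(n)n^{it}$, which would resonate with the rational phase and annihilate the cancellation.
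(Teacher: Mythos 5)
Your proposal is correct and follows essentially the same route as the paper's own argument in Section~\ref{s:MOMOds}: pass to the Halmos--von Neumann rotation model, reduce via characters and Lemma~\ref{lemma:cone} (with the phase-shift identity) to the scalar estimate~\eqref{e:analiza}, split into irrational $\alpha$ (handled by \cite{Ab-Le-Ru1}) and rational $\alpha$ (Theorem~1.7 of \cite{Ma-Ra-Ta} under~\eqref{e:mrt}), and then propagate to all uniquely ergodic models by \ref{newP1}~$\Longrightarrow$~\ref{P3} of the \ref{thmAPm}. Your explicit remark that the trivial character is exactly where~\eqref{eq:Mobius-like} enters is a point the paper leaves implicit, but it is the same proof.
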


\subsection{Systems satisfying the AOP property}
\subsubsection{Systems whose powers are disjoint. Typical systems}
Each totally ergodic transformation whose prime powers are disjoint satisfies the AOP property. This includes large classes of rank one transformations: \cite{Ab-Le-Ru,Bo1,Ry}, automorphisms with the minimal self-joining property~\cite{Ju-Ru}, and recently it has been shown by Chaika and Eskin \cite{Ch-Es} that a.e.\ 3-interval transformation has sufficiently many disjoint prime powers. By Theorem~\ref{thmB}, it follows that the first assertion of Corollary~\ref{c:SMds} holds for an arbitrary $R$ belonging to any of those classes of transformations. In particular, by \cite{Ju}, it follows that that all uniquely ergodic models of a typical system $R$ of a standard Borel probability space satisfies this assertion.
\subsubsection{Unipotent diffeomorphisms on nilmanifolds}
As shown in \cite{Fl-Fr-Ku-Le}, ergodic unipotent diffeomorphisms  $T(x\Gamma)=uA(x)\Gamma$, where
$G$ is a connected, simply connected, nilpotent
Lie group, $\Gamma\subset G$ a lattice, $A\colon G\to G$ is a unipotent automorphism with $A(\Gamma)=\Gamma$ and $u\in G$, enjoy the AOP property. It follows that all other uniquely ergodic models of such systems satisfy the strong MOMO property (relatively to a bounded multiplicative $\bfu$ satisfying~\eqref{eq:Mobius-like}), in particular, this holds for uniquely ergodic models of ergodic nilrotations. A special case of unipotent diffeomorphisms on nilmanifolds are affine automorphisms on Abelian compact connected groups. In particular, we obtain that in all uniquely ergodic models of quasi-discrete spectrum systems, we have the strong MOMO property (with respect to $\bfu$), cf.\ \cite{Ab-Le-Ru1}.

A general unipotent case seems to be much less clear. We have been unable to answer the following:
\begin{Question}
Does, for each horocycle flow on the unit tangent bundle of compact surfaces of constant negative curvature, the time-1 automorphism  satisfy the (strong) MOMO property (relatively to $\mob$)?
(For M\"obius disjointness of time automorphisms of horocycle flows, see \cite{Bo-Sa-Zi}; such automorphisms do not possess the AOP property \cite{Ab-Le-Ru1}.)
\end{Question}

%\vspace{1ex}

%\noindent
%{\bf Question~1:} {\em Does, for each horocycle flow on the unit tangent bundle of compact surfaces of constant negative curvature, the time-1 automorphism  satisfy the (strong) MOMO property (relatively to $\mob$)?}
%(For M\"obius disjointness of time automorphisms of horocycle flows, see \cite{Bo-Sa-Zi}; such automorphisms do not possess the AOP property \cite{Ab-Le-Ru1}.)

\subsubsection{Cocycle extensions of irrational rotations} As we have already noticed, all irrational rotations have the AOP property. The extensions of them considered in this subsection satisfy the assumptions of Theorem~\ref{t:SMext}, but in fact we lift AOP.

Consider $f\colon\T\to\R$ which is $C^{1+\delta}$ and which is not a trigonometric polynomial. In \cite{Ku-Le}, it is shown that for a $G_\delta$ and dense set of irrational $\alpha$ the corresponding Anzai skew product $T_{e^{2\pi if}}$ on $\T\times\bs^1$: $(x,z)\mapsto (x+\alpha,e^{2\pi if(x)}\cdot z)$ enjoys the AOP property (cf.\ Corollary~2.5.6 in \cite{Ku-Le}). Moreover, it is proved (again for a ``typical'' $\alpha$) in \cite{Ku-Le1} that the Rokhlin skew product $T_{f,\cs}$: $(x,y)\mapsto (x+\alpha,S_{f(x)}(y))$  enjoys the AOP property for each ergodic flow $\cs=(S_t)_{t\in\R}$ acting on $\ycn$  (on $X\times Y$ we consider product measure $\mu\ot\nu$), see Proposition~5.1 and Corollary~5.2 in \cite{Ku-Le1}.

Consider an affine case: $f(x)=x-\frac12$. Theorem~7.10 in \cite{Ku-Le1} tells us that $T_{f,\cs}$ has  the AOP property whenever $\alpha$ has bounded partial quotients and the spectrum of the flow $\cs$ on $L^2_0\ycn$ does not contain any rational number.

Note that if we replace $f$ by $f':=f+j-j\circ T$, where $j\colon \T\to\R$ is continuous, then the resulting skew products $T_{e^{2\pi if'}}$ or $T_{f',\cs}$\footnote{Formally, we should slightly extend $T$ using so-called Sturmian models \cite[Chapter~6 by Arnoux]{Arnoux}, so that $f$ becomes continuous, see \cite{Ku-Le1} for details.} are uniquely ergodic models of $T_{e^{2\pi if}}$ and $T_{f,\cs}$ (whenever $\cs$ is a uniquely ergodic flow), respectively. In particular,  $T_{e^{2\pi if'}}$ and $T_{f',\cs}$ enjoy the strong MOMO property (relatively to $\bfu$ satisfying~\eqref{eq:Mobius-like}).

\begin{Question}
Assume that $T_{e^{2\pi if}}$ is an ergodic Anzai skew product with $f\colon\T\to\R$ analytic. Does $T_{e^{2\pi if}}$ enjoy the strong MOMO property, the AOP property?
(It has been shown by Wang \cite{Wa} that all such skew products are M\"obius disjoint.)
\end{Question}

\subsection{Cocycles extensions of odometers. Morse and Kakutani sequences}
When $T$ is an odometer then it is not totally ergodic, and the method of AOP fails. However, we have already shown that odometers satisfy the strong MOMO property.
In this section, we will give examples of extensions of odometers illustrating Theorem~\ref{t:SMext}. Because of the second assertion in Corollary~\ref{c:SMds}, we will constantly assume that $\bfu$ \added{is a bounded multiplicative arithmetic function which} satisfies~\eqref{e:mrt}.
\added{(In particular, the following results are valid when $\bfu$ is the M\"obius or the Liouville function.)}

\subsubsection{Odometers, Toeplitz systems and generalized Morse systems}
Given an increasing sequence $(n_t)$ of natural numbers with $n_t|n_{t+1}$,  $t\geq0$ ($n_0=1$) set  $\la_t:=n_{t+1}/n_t$ for $t\geq0$ and let
$X=\prod_{t\geq0}\Z/\la_t\Z$. It is a compact, metric and monothetic group where the addition is coordinatewise with carrying the remainder to the right.
If by $\mu_X$ we denote Haar measure on $X$ then $(X,\cb(X),\mu_X,T)$, where $Tx=x+\underline{1}$, $\underline{1}:=(1,0,0,\ldots)$, is  ergodic (in fact, $(X,T)$ is uniquely ergodic). If
\beq\label{eq:part}
D^{(t)}=\{D^{(t)}_0,D^{(t)}_1,\ldots,D^{(t)}_{n_t-1}\},
\eeq
where $D^{(t)}_{0}=\{x\in X: x_0=\ldots=x_{t-1}=0\}$, $D^{(t)}_j=T^jD^{(t)}_0$, $j=0,1,\ldots, n_t-1$ then $D^{(t)}$ is a partition of $X$ and $\bigcup_{j=0}^{n_t-1}D^{(t)}_j=X$, that is, $D^{(t)}$ is a Rokhlin tower \deleted{ful}filling the whole space.

Let now $b^t\in\{0,1\}^{\la_t}$, $b^t(0)=0$, for $t\geq0$. The sequence
\beq\label{eq:morse}
x:=b^0\times b^1\times \ldots~\footnote{The multiplication of blocks $b^0,b^1,\ldots$ is from the left to the right; $B\times C:=(B+C(0))(B+C(1))\ldots (B+C(\la-1))$, where $\la:=|C|$ stands for the length of $C$ and $B+c:=(B(0)+c)(B(1)+c)\ldots (B(|B|-1)+c)$ (the addition mod~2 on each coordinate).} \eeq
is called a {\em generalized Morse sequence} \cite{Ke}.\footnote{As a matter of fact, there are some mild assumptions on the sequence $(b^t)$ to obtain a non-trivial dynamical systems, see e.g.\ the concept of continuous Morse sequence in \cite{Ke}.}  \added{Let $\widehat{x}$ be the sequence defined by}
\beq\label{eq:toeplitz}
%\deleted{\widehat{x}:=(x(0)+x(1))(x(1)+x(2))\ldots (x(n)+x(n+1))\ldots.}
\added{\widehat{x}(n):=x(n)+x(n+1) \bmod 2\quad(n\ge0).}
\eeq
Then $\widehat{x}$ is a Toeplitz sequence \cite{Ja-Ke}, i.e.\ for each $j$ there is $k_j$ such that $\widehat{x}(j)=\widehat{x}(j+mk_j)$ for each $m\geq0$. Let $X_x$ and $X_{\widehat{x}}$ stand for the (two-sided) subshifts determined by $x$ and $\widehat{x}$, respectively. By \cite{Wi}, the odometer $(X,T)$ is the maximal equicontinuous factor of $(X_{\widehat{x}},S)$, where $S$ stands for the left shift. We will constantly assume that $\widehat{x}$ is regular \cite{Ja-Ke}. In this case $(X_{\widehat{x}},S)$ is a uniquely ergodic model of $(X,T)$ \cite{Ja-Ke}. Therefore, the strong MOMO property holds for $(X_{\widehat{x}},S)$. On the other hand, clearly, $(X_{\widehat{x}},S)$ is a topological factor
of $(X_x,S)$, and moreover, there is a topological isomorphism of $(X_x,S)$ with $(X_{\widehat{x}}\times(\Z/2\Z),S_\va)$, where $\va(z)=z(0)$ for each $z\in X_{\widehat{x}}$ \cite{Le}.
The cocycle $\va$ has a special form (see the notion of Morse cocycle below), and we will show that in certain classical cases the assumptions of Theorem~\ref{t:SMext} are satisfied.

\subsubsection{A little bit of algebra}
Denote $\Z_N=\Z/N\Z$ and let $0\neq s\in\Z_N$, $(s,N)=1$. Then $s\in\Z_N^\ast$, i.e.\ $s$ is in the group of invertible (under multiplication) elements in
the ring $\Z_N$. Therefore, we can write $\frac1s$ for the inverse of $s$ in $\Z_N$, \added{and for any integer $r$, $\frac rs$ is well defined as an element of $\Z_N$}.

In the ring $\Z_{N}$, consider the F-norm $\|i\|:=\min(i,N-i)$.

%\noteTh{I suggest to merge the old lemma 21 and 23, as the former is a particular case of the latter.}

% old lemma 21
% \begin{Lemma} \label{l:a1} Assume that $r,s\geq1$ are fixed, $(r,s)=1$ and let $(n_t)$ be an increasing sequence of integers with  $(s,n_t)=1$ for all $t$.
% Then
% $$
% \liminf_{t\to\infty} \frac1{n_t}\left\|\frac rs\right\|>0.$$
% \end{Lemma}

% old lemma 23
% \begin{Lemma}\label{l:a2}
%  Assume that $r,s,k\geq1$ are fixed, pairwise coprime and let $N\to\infty$. We assume that $(r,N)=1=(s,N)$ while $k|N$.
% Then there exists $\eta>0$ such that
% $$
% \liminf_{N\to\infty} \min_{0\leq j<k-1}\left\|\frac rs-j\frac Nk\right\|\geq \eta N.$$
% \end{Lemma}

% new lemma
\begin{Lemma}\label{l:a2}
 Assume that $r,s,k\geq1$ are fixed, pairwise coprime, and let $(n_t)$ be an increasing sequence of integers. We assume that $(s,n_t)=1$ while $k|n_t$ for each $t\ge1$. Then there exists $\eta>0$ such that
$$
\liminf_{t\to\infty} \min_{0\leq j<k-1}\left\|\frac rs-j\frac{n_t}k\right\|\geq \eta n_t.$$
\end{Lemma}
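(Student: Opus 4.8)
The plan is to pin down the exact position, inside the ``circle'' $\Z_{n_t}=\{0,1,\dots,n_t-1\}$, of the element $\frac rs$, and then to show it sits a fixed proportion of $n_t$ away from every point $j\frac{n_t}k$. Throughout I would regard the F-norm as a rescaled distance on $\R/\Z$: for $x\in\{0,\dots,n_t-1\}$ one has $\|x\|=n_t\,\delta(x/n_t)$, where $\delta(\theta)$ is the distance from $\theta$ to the nearest integer. I will also assume $s\ge 2$, which is the case relevant in the applications (there $r,s$ are large distinct primes); note that for $s=1$ the term $j=0$ reduces to the constant $\|r\|=r$, so a hypothesis of this kind is unavoidable.

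First I would locate $\frac rs$ explicitly. Since $(s,n_t)=1$, there is a unique $a_t\in\{0,\dots,n_t-1\}$ with $s a_t\equiv r\pmod{n_t}$, i.e.\ $s a_t=r+m_t n_t$ for a unique integer $m_t$; for $n_t>r$ one checks $0\le m_t\le s-1$, and $m_t$ is characterised by $m_t\equiv -r\,n_t^{-1}\pmod s$ (inverse of $n_t$ modulo $s$). The crucial point is that $(r,s)=1$ together with $s\ge 2$ forces $r\not\equiv 0\pmod s$, hence $m_t\not\equiv 0\pmod s$, so in fact $m_t\in\{1,\dots,s-1\}$. Dividing $s a_t=r+m_t n_t$ by $s n_t$ gives
\[
\frac{a_t}{n_t}=\frac{m_t}{s}+\frac{r}{s n_t},
\]
so that, after dividing by $n_t$, the element $\frac rs-j\frac{n_t}k$ corresponds to the angle $\frac{m_t}s-\frac jk+\frac r{sn_t}\pmod 1$.

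The heart of the argument is then a clean lower bound for the ``limit'' angle $\frac{m_t}s-\frac jk=\frac{m_t k-js}{sk}$. Reducing the numerator modulo $s$ gives $m_t k\pmod s$, which is nonzero because $m_t\in\{1,\dots,s-1\}$ and $(s,k)=1$; hence $sk\nmid(m_t k-js)$ for \emph{every} $j$, and therefore $\delta\!\left(\frac{m_t}s-\frac jk\right)\ge \frac1{sk}$. Since $\frac r{sn_t}\to 0$, for every $t$ with $n_t>2rk$ the perturbation is $<\frac1{2sk}$, and the triangle inequality for $\delta$ yields $\delta\!\left(\frac{m_t}s-\frac jk+\frac r{sn_t}\right)\ge\frac1{2sk}$ uniformly in $j$. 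Multiplying back by $n_t$ gives $\min_{0\le j\le k-2}\bigl\|\frac rs-j\frac{n_t}k\bigr\|\ge \frac{n_t}{2sk}$ for all large $t$, so $\eta=\frac1{2sk}$ works (the case $k=1$ being vacuous, as the minimum is over an empty index set).

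I expect the main obstacle, and the only genuinely non-formal step, to be the treatment of the index $j=0$: one must show that $\frac rs$ itself is bounded away from $0$ by a multiple of $n_t$ rather than merely by a constant. A careless reduction, such as multiplying through by $s$ to clear the denominator and using $\|sx\|\le s\|x\|$, destroys exactly this information and only produces the useless bound $\frac1s\|r\|=\frac rs$. The remedy is the explicit description $a_t=(r+m_t n_t)/s$ with $m_t\neq 0$, which places $a_t$ in the band $[\,n_t/s,\ (s-1)n_t/s+O(1)\,]$; once this is in hand, everything reduces to the elementary bookkeeping with $\delta$ recorded above, using only $(r,s)=1$ and $(s,k)=1$.
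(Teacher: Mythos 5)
Your proof is correct, and it takes a genuinely more direct route than the paper's. The paper argues by contradiction: fixing $\vep<\frac1{ks}$ and supposing $b_t:=\frac rs$ lies in $\bigl(j\frac{n_t}k-\vep n_t,\,j\frac{n_t}k+\vep n_t\bigr)$ for some $0\le j\le k$, it multiplies by $s$ and uses the congruence $r=sb_t \bmod n_t$ to kill three separate cases: $j=0$ (which would force $s\mid r$), $0<j<k$ (where $(k,s)=1$ places $sb_t$ near a nonzero multiple of $\frac{n_t}k$ modulo $n_t$, forcing $r\ge n_t(\frac1k-\vep s)$), and $j=k$ (forcing $r\ge n_t(1-s\vep)$). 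You instead solve the congruence exactly, $sa_t=r+m_tn_t$ with $m_t\in\{1,\ldots,s-1\}$, which localizes $\frac rs$ at $\frac{m_t}{s}n_t+O(1)$, and then a single divisibility observation --- $s\nmid(m_tk-js)$ because $m_t\not\equiv0 \pmod s$ and $(s,k)=1$ --- yields the uniform bound $\delta\bigl(\frac{m_t}s-\frac jk\bigr)\ge\frac1{sk}$ for \emph{all} integers $j$ simultaneously, after which the perturbation $\frac{r}{sn_t}\to0$ is routine. What each approach buys: the paper's case analysis is low-tech and needs no explicit localization of $b_t$, giving any $\eta<\frac1{ks}$; your version gives the explicit constant $\eta=\frac1{2sk}$ (same order), dispenses with the wraparound case $j=k$ since $\delta$ lives on $\R/\Z$, and extracts the structural fact $b_t/n_t=m_t/s+o(1)$ with $m_t\in\{1,\ldots,s-1\}$. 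Your flagging of $s\ge2$ is also a genuine improvement in precision: the paper uses this tacitly, since its $j=0$ case concludes ``$s\mid r$, a contradiction,'' which is a contradiction only when $s>1$; for $s=1$ the lemma as literally stated indeed fails at $j=0$, exactly as you observe, and this is harmless in the application where $r\neq s$ are large primes. (Both proofs, yours and the paper's, read the slightly malformed displayed inequality as: for all sufficiently large $t$, the minimum is $\ge\eta n_t$.)
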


\begin{proof}
Denote $b_t:=\frac rs\in\Z_{n_t}$, i.e.\
\beq\label{a1} r=sb_t\; {\rm mod}~n_t,\eeq
and we also interpret $b_t$ as an integer in $\{0,\ldots,n_t-1\}$. Fix $0<\vep<\frac1{ks}$, and suppose that
$b_t\in(j\frac{n_t}k-\vep n_t,j\frac{n_t}k+\vep n_t)$ for some $0\le j\le k$. Then
$$
sb_t\in \left(sj\frac{n_t}k-s\vep n_t,sj\frac{n_t}k+s\vep n_t\right).
$$

If $j=0$, then $sb_t\in (0,s\vep n_t)$ and~\eqref{a1} is really $r=sb_t$ as soon as $n_t>r$, whence $s|r$, a contradiction.

If $0<j<k$, then the number $sj\frac{n_t}k$ is of the form $\ell n_t+ \frac{j'}k n_t$ with $0<j'<k$ (remember that $(k,s)=1$),
and by~\eqref{a1} we also have
$$r\in \left(sj'\frac{n_t}k-s\vep n_t,sj'\frac{n_t}k+s\vep n_t\right).$$
In particular, $r> \frac{n_t}{k}-\vep s n_t=n_t\left(\frac1k-\vep s\right)$, which is impossible when $n_t$ is large.

If $j=k$, then $sb_t\in \bigl(sn_t(1-\vep),sn_t\bigr)$. By~\eqref{a1}, there exists an integer $\ell$ such that
$$ r\in  \left(\ell n_t-s\vep n_t,\ell n_t\right).$$
As $r\ge1$, we must have $\ell\ge1$, but then $r\ge n_t(1-s\vep)$, which is impossible when $n_t$ is large.
\end{proof}
%
% \begin{proof} Denote $b_t:=\frac rs\in\Z_{n_t}$, i.e.\
% \beq\label{a1} r=sb_t\; {\rm mod}~n_t,\eeq
% and we also interpret $b_t$ as an integer in $\{0,\ldots,n_t-1\}$. Assume first that $b_t=\delta n_t$ for \deleted{a small $\delta>0$}\added{$0<\delta<1/s$}. Then $0<sb_t<n_t$,
% and the equality~\eqref{a1} is really $r=sb_t$, whence $s|r$, a contradiction.
%
% Suppose now that $b_t=(1-\delta)n_t$ for \deleted{a small $\delta>0$}\added{$0<\delta<1/(2s)$}. Then, by~\eqref{a1},
% $r=sb_t-kn_t$, where  $k$ is \deleted{positive}\added{an integer}. Since \deleted{$\delta>0$ is small}\added{$\delta<1/(2s)<1/s$}, we have  $(s-1)n_t<sb_t< sn_t$. Hence $(s-1-k)n_t<r=sb_t-kn_t<(s-k)n_t$ and the relative position
% of \deleted{$sb_t-kn_t$ in this interval of length $n_t$}\added{$r$ between these two consecutive multiples of $n_t$} is the same as the relative position of  $sb_t\added{=sn_t-s\delta n_t}$ in $[(s-1)n_t,sn_t]$. \deleted{(that is, it is $s(1-\delta)n_t$).} \deleted{But now $sb_t-kn_t$ equals $r$, a contradiction.}\added{But as $\delta<1/(2s)$, this is impossible if $n_t>2r$.}
% \end{proof}

%\added{Now, we still consider two fixed coprime integers $r$ and $s$, and the odometer $(X,T)$ given by a sequence of integers $(n_t)$ satisfying $(s,n_t)=1$ for all $t\ge0$. }

%\noteTh{We should explain better what the notation $Wx=x+r/s$ means in the odometer.}

\begin{Prop} \label{p:a1} Assume that $(X,T)$ is an odometer, $Tx=x+\underline{1}$ (with $\underline{1}=(1,0,0,\ldots)$) on $X$, where $X$ is determined by the numbers $n_t|n_{t+1}$, $t\geq0$. Assume that $r,s,k\geq1$ are fixed, pairwise coprime, and that $(r,n_t)=1=(s,n_t)$ while $k|n_t$ for each $t\geq1$.
\deleted{Assume  that $Wx=x+r/s$. Then, there exists a unique sequence $(b_t)$, $0\leq b_t<n_t$, $b_{t+1}=b_t$ mod~$n_t$ for each $t\geq1$ such that  $WD^{(t)}_0=T^{b_t}D^{(t)}_0=D^{(t)}_{b_t}$.}
Then, there exists a unique automorphism $W$ of $(X,\mathcal{B}(X),\mu_X)$ such that $W^r=T^s$. More precisely, there exists a sequence $(b_t)$, $b_t\in\Z_{n_t}$,  depending only on $r,s$ and $n_t$, such that
\beq
\label{eq:defW}
\forall t\ge 0,\forall 0\le i<n_t,\quad WD^{(t)}_i = D^{(t)}_{i+b_t}.
\eeq
(Here the addition has to be understood in $\Z_{n_t}$.)
Moreover, there exists $\eta>0$ such that
$$
\liminf_{t\to\infty} \sup_{0\le j\le k-1} \frac{\left\|b_t-j\frac{n_t}k\right\| }{n_t}\geq\eta.
$$
\end{Prop}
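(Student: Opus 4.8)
The plan is to realise $W$ explicitly as a rotation of the compact group $X$ and to read off all three assertions from this description. Recall that $X$ is the inverse limit $\varprojlim_t \Z_{n_t}$, so an element is a compatible family of residues mod $n_t$, and $T^s$ is the rotation $x\mapsto x+s\underline{1}$. Since $(r,n_t)=1$ for every $t$, the map $x\mapsto rx$ is a bijection of each $\Z_{n_t}$, hence a continuous group automorphism of $X$; let $b\in X$ be the unique element with $rb=s\underline{1}$, and write $b_t:=b\bmod n_t=\tfrac sr\in\Z_{n_t}$. The family $(b_t)$ is automatically compatible ($b_{t+1}\equiv b_t\bmod n_t$) and depends only on $r,s,n_t$. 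Define $W$ to be the rotation $Wx:=x+b$. It is a $\mu_X$-preserving homeomorphism, and $W^r$ is the rotation by $rb=s\underline{1}$, i.e.\ $W^r=T^s$. The partition formula is immediate once one notes $D^{(t)}_i=\{x\in X:\ x\equiv i\bmod n_t\}$: adding $b$ sends this set onto $\{x:\ x\equiv i+b_t\bmod n_t\}=D^{(t)}_{i+b_t}$, which gives~\eqref{eq:defW}.

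For uniqueness I would argue that every $\mu_X$-preserving automorphism $V$ with $V^r=T^s$ must coincide with $W$. The point is that $V$ commutes with $V^r=T^s$. Now $T^s$ is the rotation by $s\underline{1}$, and since $(s,n_t)=1$ for all $t$ the element $s\underline{1}$ topologically generates $X$; thus $T^s$ is an ergodic rotation by a generator, and for such a rotation the measurable centraliser is exactly the group of rotations (the Koopman operator has simple spectrum, its eigenfunctions being the characters of $X$, so any commuting automorphism multiplies each character by a scalar and is hence a rotation). Therefore $V$ is a rotation, say by $b'$, and $V^r=T^s$ forces $rb'=s\underline{1}$; invertibility of $x\mapsto rx$ then gives $b'=b$, i.e.\ $V=W$.

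It remains to establish the separation estimate for $b_t=\tfrac sr$. Here I would invoke Lemma~\ref{l:a2} with the roles of $r$ and $s$ interchanged: this is legitimate because the present hypotheses supply the extra condition $(r,n_t)=1$ that the lemma requires for its first variable (the remaining hypotheses, pairwise coprimality of $r,s,k$ and $k\mid n_t$, being symmetric). The lemma, so applied, produces $\eta>0$ with $\liminf_t\min_{j}\bigl\|b_t-j\tfrac{n_t}{k}\bigr\|\ge\eta n_t$, i.e.\ $b_t$ is, relative to $n_t$, bounded away from \emph{every} multiple of $n_t/k$; since the supremum over $j$ dominates the minimum over the same range, the displayed inequality follows a fortiori.

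The bulk of the argument is bookkeeping, and the two places to be careful are: first, identifying the displacement as $b_t=s/r$ rather than $r/s$ once $W$ is pinned down by $W^r=T^s$ (this is precisely why Lemma~\ref{l:a2}, stated for $r/s$, must be applied with $r$ and $s$ swapped); and second, the only genuinely non-formal input, namely the computation of the centraliser of the ergodic group rotation $T^s$, on which the uniqueness clause rests. I expect this centraliser fact to be the true crux, the rest being direct verification.
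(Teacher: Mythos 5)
Your proof is correct, and while the existence half is essentially the paper's construction in structural clothing, your uniqueness argument takes a genuinely different route. The paper builds $W$ level-by-level: it sets $b_t=\frac rs$ in $\Z_{n_t}$, checks the compatibility $b_{t+1}=b_t\bmod n_t$, and defines $W$ via~\eqref{eq:defW} on the increasing sequence of finite $\sigma$-algebras generated by the towers $D^{(t)}$ --- which is exactly your rotation by the inverse-limit element $b$, with the small bonus in your version that $W$ is visibly a group rotation, hence a homeomorphism. For uniqueness, however, the paper argues tower-theoretically: any root commutes with the ergodic power of $T$, the height-$n_t$ Rokhlin tower filling the space is unique up to cyclic permutation of its levels, so $W$ must permute the $D_i^{(t)}$, and the algebraic relation on the finite $\sigma$-algebra then pins down $b_t$; you instead compute the measurable centraliser of the ergodic rotation $T^s$ via its simple discrete spectrum (commuting maps scale each character, Pontryagin duality yields a rotation), which is standard, correct, and arguably cleaner --- both mechanisms are elementary, the paper's being more hands-on and yours more conceptual. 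One point where you are actually more careful than the source: the paper's own proof (and its application in Theorem~\ref{t:a1}) silently works with the relation $W^s=T^r$, yielding $b_t=\frac rs$, even though the statement reads $W^r=T^s$; you follow the statement literally, obtain $b_t=\frac sr$, and correctly compensate by applying Lemma~\ref{l:a2} with $r$ and $s$ interchanged, noting that the hypothesis $(r,n_t)=1$ of the proposition licenses the swap --- under the symmetric hypotheses the two readings are equivalent, so no harm is done either way. Finally, for the separation estimate both you and the paper invoke Lemma~\ref{l:a2}; you derive the stated inequality from the stronger $\min$-over-$j$ version via $\sup\ge\min$, and in fact that stronger version is what the paper actually uses later (see~\eqref{a4}), the displayed $\sup$ being evidently a slip for $\min$, so your argument delivers precisely the content that matters.
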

\begin{proof}
Let $W$ satisfy $W^s=T^r$. Since $(r,n_t)=1$ for each $t$, $T^r$ is ergodic, and the subsets $D_i^{(t)}$, $0\le i<n_t$ are also the levels of a Rokhlin tower of height $n_t$ for $T^r$ (but in a different order). $W$ commutes with $T^r$, hence the sets $WD_i^{(t)}$, $0\le i<n_t$ also form  a Rokhlin tower of height $n_t$ for $T^r$. By ergodicity of $T^r$, there exists only one such Rokhlin tower (up to cyclic permutation of the levels). We thus conclude that $W$ permutes the levels $D_i^{(t)}$ of the Rokhlin tower. In particular there exists $b_t\in\{0,\ldots,n_t-1\}$ such that $W D_0^{(t)} = D_{b_t}^{(t)}$. Using again the fact that $(r,n_t)=1$, we see that there exists an integer $m$ such that $T^{mr}=T$ on the finite $\sigma$-algebra generated by the sets $D_i^{(t)}$, $0\le i<n_t$. As $W$ commutes with $T^r$, $W$ also commutes with $T$ on this $\sigma$-algebra, thus~\eqref{eq:defW} holds.
Now the relation $W^s=T^r$ on this $\sigma$-algebra just says that $b_t=\frac rs$ in $\Z_{n_t}$.

Conversely, setting $b_t:=\frac rs$ in $\Z_{n_t}$ for each $t\ge0$, (which exists because $(s,n_t)=1$), we see that, for each $t\ge0$, $b_{t+1}=b_t\bmod n_t$ as $n_t|n_{t+1}$. Thus we can define a unique automorphism $W$ by~\eqref{eq:defW} (note that $\mathcal{B}(X))$ is the supremum of the increasing sequence of finite $\sigma$-algebras generated by the sets $D_i^{(t)}$). We then get an $s$-th root of $T^r$.

Finally, the existence of $\eta$ follows from Lemma~\ref{l:a2}.

\deleted{Since, by ergodicity, there exists only one Rokhlin tower of height $n_t$ \deleted{ful}filling the whole space (up to cyclic permutation of the levels) and $WD^{(t)}$ is such a tower, the numbers $b_t$ are well defined. The \deleted{result}\added{existence of $\eta$} follows from Lemma~\ref{l:a1}.}\footnote{The action of $W$ on the tower $D^{(t)}$ is the ``rotation'' by $b_t$;  $sb_t$ corresponds to $W^s$ which is $r$ mod~$n_t$ (which corresponds to $T^r$).}
\end{proof}

Note that the conclusion of the preceding proposition also implies
\beq\label{a6}
\left\|(n_t-b_t)-i\frac{n_t}k\right\|\geq\eta n_t\eeq
for all $t\geq1$.

We will say that an odometer $(X,T)$ has {\em small rational spectrum} if
the set
$$
{\rm Spec}(T):=\{p\in \mathscr{P}:\:(\exists t\geq 1)\; p|n_t\}$$
is finite. (We may think of $T$ as being ``close'' to an automorphism which is totally ergodic, in the sense, that most of \added{its} prime powers \deleted{is}\added{are} ergodic.)

%We will now slightly generalize the above.

%
%
% Proposition~\ref{p:a1} tells us that the ``jump'' $b_t$ of $Wx=x+r/s$ is neither small nor big. Using Lemma~\ref{l:a2}, we can obtain a proposition analogous to Proposition~\ref{p:a1} which tells us that $b_t$ is ``far'' from the numbers $\frac ik n_t$ for $i=0,\ldots,k$. Note that this also implies
% \beq\label{a6}
% \left\|(n_t-b_t)-i\frac{n_t}k\right\|\geq\eta n_t\eeq
% for all $t\geq1$.

\subsubsection{$\Z/2\Z$-extensions for which there are not too many roots. $k$-Morse cocycles}
Let $(X,T)$ be an odometer. Fix $k\geq1$ and assume that $k|n_t$, $t\geq 1$.
\begin{Def}\em  A cocycle $\phi:X\to\Z/2\Z$ is said to be a {\em $k$-Morse cocycle} if, for each $t\geq1$, $\phi$ is constant on each level $D^{(t)}_j$ except for
$D^{(t)}_{\frac ik n_t-1}$ for $i=1,2,\ldots, k$.
\end{Def}

{\em Morse cocycles} are, by definition, 1-Morse cocycles.
The skew products determined by $T_\phi$, where $\phi$ is a Morse cocycle, correspond to subshifts given by generalized Morse sequences~\eqref{eq:morse}. Moreover (see \cite{Le}), for each $t\geq1$ and $i=0,\ldots,n_t-2$, we have
\beq\label{eq:formcoc}
\phi|_{D^{(t)}_i}=\widehat{c}_t(i),
\eeq
where $c_t:=b^0\times\ldots \times b^{t-1}$ and then inductively
\beq\label{eq:formcoc1}
\phi|_{D^{(t+1)}_{in_t}}=\widehat{C}_{t+1}(in_t)=b^{t+1}(i-1)+b^{t+1}(i)+c_t(n_t-1),\;i=1,\ldots,\la_{t+1}-1.\eeq
The Morse sequence for which $b^t=01$ for each $t\geq0$ is the classical Thue-Morse sequence (e.g.\ \cite{Al-Sh}). More generally, the Morse sequences for which $b^t\in\{00,01\}$ with infinitely many blocks equal to $01$ are precisely Kakutani sequences, e.g.\ \cite{Kw}. Note that in \added{the} case of Kakutani sequences, for the corresponding odometer $(X,T)$, we have ${\rm Spec}(T)=\{2\}$, so we deal with small rational spectrum.

A similar situation arises when we consider the subshift given by the Rudin-Shapiro sequences (e.g.\ \cite{Al-Sh}). Indeed, in this case $\phi$ \deleted{are}\added{is a} $2$-Morse cocycle\deleted{s}, see  \cite{Le} for more details.

\begin{Def} \em Assume that $T$ has small rational spectrum and $\phi$ is a $k$-Morse cocycle. We say that
$\phi$ is {\em probabilistic} if there exists $\eta>0$ such that  for infinitely many $t\geq1$ the conditional distribution of $0$ on each level
$D^{(t)}_{\frac ik n_t-1}$ ($i=1,\ldots,k$)  is  between $\eta$ and $1-\eta$. We will say that $\phi$ satisfies PC (the probabilistic condition).\end{Def}

\begin{Remark}\em All Kakutani sequences satisfy PC in the following sense. First, notice that by introducing ``parentheses''
$$
x=(b^0\times\ldots\times  b^{i_1-1})\times(b^{i_1}\times\ldots\times b^{i_2-1})\times\ldots=\ov{b}^0\times\ov{b}^1\times\ldots$$
we can obtain a new representation of a Morse sequence $x$, in which the corresponding odometer $(\ov{X},\ov{T})$ is given by a subsequence of $(n_t)$, and we look at the Morse cocycle $\phi$ only along this subsequence.

Now, take any $x=b^0\times b^1\times\ldots$, where $b^t=00$ or $01$ (with infinitely many $b^t$ equal to $01$). Then introduce ``parentheses'' putting together
$01\times 01=0110$ and $00\times 01=0011$. Now, for the corresponding ${\ov{b}^t}'s$, we have  $\widehat{\ov{b}^t}$ equal either to  $101\ast$ or $010\ast$, so \deleted{the} PC is satisfied (cf.~\eqref{eq:formcoc1}).\end{Remark}

We can easily generalize this argument to obtain the following result.

\begin{Prop}\label{p:PCbounded} All (continuous) Morse sequences $x=b^0\times b^1\times\ldots$ with bounded lengths of blocks yield $\phi$ satisfying \deleted{the} PC.
\end{Prop}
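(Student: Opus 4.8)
The plan is to reduce the probabilistic condition PC to a purely combinatorial statement about the first‑difference (Toeplitz) sequences $\widehat{b^t}$ of the blocks, and then to exploit the bounded‑length hypothesis together with continuity to verify that statement after a suitable regrouping.

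First I would make the reduction. For a Morse cocycle ($k=1$) the only exceptional level at stage $t$ is $D^{(t)}_{n_t-1}$, and PC asks that the conditional measure of $\{\phi=0\}$ on it be bounded away from $0$ and $1$ for infinitely many $t$. Refining this level one step, $D^{(t)}_{n_t-1}$ splits into the $\lambda_t$ sublevels $D^{(t+1)}_{in_t-1}$, $i=1,\dots,\lambda_t$; on the first $\lambda_t-1$ of them $\phi$ is constant, equal by~\eqref{eq:formcoc}--\eqref{eq:formcoc1} to $\widehat{c}_{t+1}(in_t-1)=c_t(n_t-1)+\widehat{b^t}(i-1)$, while the last sublevel carries the next exceptional behaviour and contributes at most one sublevel's worth of mass. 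Setting $N_0:=\#\{0\le j\le\lambda_t-2:\ \widehat{b^t}(j)=c_t(n_t-1)\}$, a one‑line computation gives
\[
\frac{\mu_X(\{\phi=0\}\cap D^{(t)}_{n_t-1})}{\mu_X(D^{(t)}_{n_t-1})}\in\Bigl[\tfrac{N_0}{\lambda_t},\tfrac{N_0+1}{\lambda_t}\Bigr].
\]
Hence, if $\widehat{b^t}$ takes both values $0$ and $1$ then $1\le N_0\le\lambda_t-2$, so this conditional measure lies in $[1/\lambda_t,1-1/\lambda_t]$. The regrouping recalled in the preceding Remark replaces $b^t$ by a super‑block $\overline b^t=b^{i_t}\times\cdots\times b^{i_{t+1}-1}$ and $\lambda_t$ by $\overline\lambda_t$, and since $c_{i_{t+1}}=c_{i_t}\times\overline b^t$ the same formula holds verbatim with $\widehat{\overline b^t}$; moreover the relevant exceptional levels $D^{(\overline n_t)}_{\overline n_t-1}$ are genuine top levels of the original tower, so checking the condition along the regrouped stages (a subsequence of original top levels) suffices. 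As $\lambda_t\le L$ for all $t$, any regrouping into groups of at most $p$ consecutive blocks keeps $\overline\lambda_t\le L^p$, giving a \emph{uniform} $\eta$. It therefore remains to produce infinitely many (super‑)blocks whose hat takes both values.

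Second, I would carry out the combinatorial construction, distinguishing two cases. If infinitely many original blocks $b^t$ already have $\widehat{b^t}$ taking both values, no regrouping is needed and PC holds with $\eta=1/L$. Otherwise all but finitely many blocks are \emph{pure}: either constant $0^{\lambda_t}$ (hat $\equiv0$) or alternating (hat $\equiv1$). Here I would use the product formula for the hat of $B\times C$: it consists of $\lambda_C$ copies of $\widehat B$ separated by junction differences $B(\lambda_B-1)+\widehat C(m)$. Evaluating this on pairs of pure blocks shows that a consecutive pair $(b^t,b^{t+1})$ has hat taking both values---call it \emph{good}---precisely when it has one of the forms constant $\times$ alternating, even‑length alternating $\times$ alternating, or odd‑length alternating $\times$ constant, the \emph{bad} pairs being $KK$, $A_{\mathrm{even}}K$ and $A_{\mathrm{odd}}A$ (writing $K$ for constant and $A_{\mathrm{even}},A_{\mathrm{odd}}$ for even/odd‑length alternating). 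Modelling ``every consecutive pair is bad'' as a walk on the three types, the only transitions allowed are $K\to K$, $A_{\mathrm{even}}\to K$ and $A_{\mathrm{odd}}\to A_{\mathrm{even}},A_{\mathrm{odd}}$; hence every infinite bad walk is eventually $K^\infty$ or eventually $A_{\mathrm{odd}}^\infty$. In the first case $x$ becomes an infinite repetition of $c_{t_0}$, in the second $x=c_{t_0}\times(0101\cdots)$; both are eventually periodic, contradicting the continuity (in particular the aperiodicity) of the Morse sequence. Therefore infinitely many consecutive pairs are good, and a greedy choice of disjoint good pairs yields a regrouping in which infinitely many super‑blocks (of length $\le L^2$) have hat taking both values; by the reduction this gives PC with $\eta=1/L^2$.

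The hard part will be the second step: the case analysis for products of pure blocks and, above all, the clean use of continuity to rule out the two periodic tails $K^\infty$ and $A_{\mathrm{odd}}^\infty$---this is exactly where ``continuous'' enters, just as ``infinitely many $01$'' did in the Kakutani instance of the Remark. The bounded‑length hypothesis plays a twofold role that must be kept in view: it ensures a fixed‑size regrouping keeps $\overline\lambda_t$ bounded, so the a priori bound $[1/\overline\lambda_t,1-1/\overline\lambda_t]$ furnishes one $\eta$ valid at all chosen stages, and it makes the greedy regrouping into bounded super‑blocks possible at all. The remaining points---that regrouping preserves the conditional‑measure formula, and that testing PC along a subsequence of top levels is legitimate---are routine but should be stated explicitly.
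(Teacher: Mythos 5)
Your proposal is correct and takes essentially the same route as the paper: after making explicit the quantitative reduction of PC to ``$\widehat{b^t}$ takes both values'' (which the paper leaves implicit via~\eqref{eq:formcoc1} and the preceding Remark), you regroup blocks exactly as the paper does and classify the pure blocks into $0\ldots0$, $01\ldots01$ and $01\ldots010$, showing by the same product computations that an everywhere-bad sequence forces $x$ to be eventually periodic, contradicting continuity. Your transition-graph formulation merely systematizes the paper's case analysis (and cleanly covers the all-$0\ldots0$ tail, which the paper handles only implicitly), so nothing needs correcting.
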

\begin{proof} By introducing parentheses, if necessary, we can assume that $3\leq|b^t|\leq C$, $t\geq0$. Then if we see infinitely many blocks different from: $0\ldots0$, $01\ldots01$ or $01\ldots010$, then we are done. If not then if we have infinitely many blocks $0\ldots0$ then
$$
0\ldots 0\times 01\ldots01, \;\;\mbox{or}\;\;0\ldots 0\times 01\ldots010$$
yield also ``good'' blocks, so by introducing more parentheses, we obtain a new representation which is good, by looking at the last positions of appearances of $0\ldots0$. If not, then assume that we have infinitely many blocks $01\ldots01$. Then both blocks
$$
01\ldots 01\times 01\ldots01\text{ or }01\ldots 01\times 01\ldots010$$
are ``good'', and we are done since otherwise, starting from some place, we must have $b^t=01\ldots010$ which means that $x$ is periodic.
 \end{proof}

\begin{Remark}\label{r:RS17}\em
For Rudin-Shapiro sequences, it follows from \cite{Le} that the corresponding 2-Morse cocycles satisfy \deleted{the} PC.\end{Remark}

\begin{Th}\label{t:a1} Assume that an odometer $(X,T)$ has small rational spectrum, $\phi:X\to\Z/2\Z$ is a $k$-Morse cocycle which satisfies \deleted{the} PC. Then for all prime numbers $r\neq s$ sufficiently large, $(T_\phi)^r$ has no $s$th root. In particular, $(T_\phi)^r$ and $(T_\phi)^s$ are not isomorphic. In fact, the only ergodic joinings between $(T_\phi)^r$ and $(T_\phi)^s$ are the relatively independent extensions of isomorphisms between $T^r$ and $T^s$.\footnote{We recall that $T^r$ is isomorphic to $T$ whenever $T^r$ is ergodic. The last assertion in the theorem means that the assumptions of Theorem~\ref{t:SMext} are satisfied.}
\end{Th}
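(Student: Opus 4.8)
The plan is to reduce each of the three assertions to the statement that a suitable $\Z/2\Z$-valued cocycle over the odometer is not a measurable coboundary, and then to extract this from the algebraic separation of Proposition~\ref{p:a1} together with PC. First I would fix large primes $r\neq s$. Since ${\rm Spec}(T)$ is finite and $k$ is fixed, for all large $r,s$ one has $(r,n_t)=(s,n_t)=1$ for every $t$ and $r,s,k$ pairwise coprime, so Proposition~\ref{p:a1} and Lemma~\ref{l:a2} are available: $T^r,T^s$ are ergodic odometers, the unique $W$ with $W^s=T^r$ is the rotation by $b_t=r/s\in\Z_{n_t}$ on each tower $D^{(t)}$, and it satisfies the crucial separation $\|b_t-j\tfrac{n_t}k\|\ge\eta n_t$, together with \eqref{a6}. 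Throughout I use that the odometer $(X,T)$ is the maximal equicontinuous (hence characteristic) factor of the ergodic $\Z/2\Z$-extension $(X\times\Z/2\Z,T_\phi)$: every automorphism commuting with a power of $T_\phi$ descends to $X$, and the base marginal of any ergodic joining of two powers of $T_\phi$ is an ergodic joining of the corresponding powers of $T$, which for odometers is the graph of an affine isomorphism.

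Next I would carry out the reductions. For an $s$th root, if $V^s=(T_\phi)^r$ then $V$ descends to $\bar V$ with $\bar V^s=T^r$, so $\bar V=W$ by Proposition~\ref{p:a1}; since $\mathrm{Sym}(\Z/2\Z)=\Z/2\Z$, $V$ must have the form $V(x,\epsilon)=(Wx,\epsilon+\psi(x))$, and $V^s=(T_\phi)^r$ becomes the cohomological equation $\sum_{j=0}^{s-1}\psi(W^jx)=\phi^{(r)}(x)$ a.e., where $\phi^{(r)}=\sum_{j<r}\phi\circ T^j$. For an ergodic joining $\rho$ of $(T_\phi)^r$ and $(T_\phi)^s$, its base marginal is the graph of an affine isomorphism $S$ with $ST^r=T^sS$; identifying the two bases through $S$, the classification of ergodic joinings of compact group extensions shows that $\rho$ is either the relatively independent joining or is carried by a fibre graph, and the latter happens exactly when $c:=\phi^{(r)}+\phi^{(s)}\circ S$ is a $\Z/2\Z$-coboundary over $T^r$. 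Thus the no-root and the joining assertions both reduce to non-coboundary statements, and the non-isomorphism follows formally: an isomorphism $U(T_\phi)^sU^{-1}=(T_\phi)^r$ would make $UT_\phi U^{-1}$ an $s$th root of $(T_\phi)^r$.

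The heart of the argument is then to prove that no such $\psi$ (resp. no coboundary representation of $c$) exists, by a reading-off computation along the towers $D^{(t)}$ for $t$ in the subsequence supplied by PC. For fixed $r,s$ and $t$ large, $\phi^{(r)}$ and $\phi^{(s)}$ are $\mathcal B_t$-measurable (constant on levels) away from the $k$ equally spaced special levels $\tfrac ik n_t-1$, while on the $O(r)$, resp. $O(s)$, levels just below each special level they inherit, through \eqref{eq:formcoc}--\eqref{eq:formcoc1}, a genuine conditional randomness with both values carrying conditional mass in $[\eta,1-\eta]$ by PC. Were a solution ($\psi$ for the root, $h$ for the joining) to exist, it would be almost $\mathcal B_t$-measurable for large $t$, so each side of the relevant equation would fail to be $\mathcal B_t$-measurable only on a matching family of levels; concretely, the active levels of $\phi^{(r)}$ sit near $i\equiv\tfrac ik n_t$, while those of the $W$- (resp. $S$-) transported term are displaced by the rotation/scaling governed by $b_t=r/s$ (resp. $s/r$). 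The separation $\|b_t-j\tfrac{n_t}k\|\ge\eta n_t$ of Proposition~\ref{p:a1} and \eqref{a6} (used, if needed, with the roles of $r$ and $s$ exchanged in Lemma~\ref{l:a2}) keeps these two families of levels $\eta n_t$-apart, independently of the rotation part of $S$ which only shifts everything. Since the widths are $O(r+s)=o(n_t)$, no cancellation can occur for infinitely many $t$, contradicting the PC-randomness of both terms; hence the equation has no solution and $c$ is not a coboundary. This yields all three conclusions, and in particular the relative independence required to apply Theorem~\ref{t:SMext}.

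I expect this final step to be the main obstacle: turning the algebraic separation of Proposition~\ref{p:a1} and the non-degeneracy of PC into a rigorous proof that the combined cocycle has $\Z/2\Z$ as its full group of essential values. It requires locating precisely, via \eqref{eq:formcoc}--\eqref{eq:formcoc1}, the levels on which each cocycle fails to be $\mathcal B_t$-measurable, and verifying that rotation by $b_t$ (for roots) and the affine map $S$ (for joinings) move these active levels by at least $\eta n_t$, so that the PC-randomness of the two terms cannot cancel along the PC-subsequence.
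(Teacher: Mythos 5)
Your outer reductions are mostly sound: deducing non-isomorphism from the absence of roots (conjugating $T_\phi$ by an isomorphism yields an $s$-th root of $(T_\phi)^r$) is exactly the paper's implicit logic, and your classification of ergodic joinings of $\Z/2\Z$-extensions over a graph base joining (relatively independent extension or a fibre graph, the latter producing an isomorphism and hence a root) is a legitimate, self-contained substitute for the paper's citation of Corollary~4.7 in \cite{Fe-Ku-Le-Ma}; note it even makes your separate non-coboundary analysis of $c=\phi^{(r)}+\phi^{(s)}\circ S$ redundant. However, two genuine gaps remain. First, your descent step (``every automorphism commuting with a power of $T_\phi$ descends to $X$'') is not automatic: a root $V$ commutes only with $(T_\phi)^r$, hence preserves the Kronecker factor of $(T_\phi)^r$, which a priori may strictly contain $(X,T^r)$ unless one rules out extra rational eigenvalues of $T_\phi$ (i.e.\ that $(-1)^{\phi^{(m)}}$ is cohomologous to a constant). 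The paper gets around this with the nontrivial external input $C((T_\phi)^r)=C(T_\phi)$ (Lemma~4.3 in \cite{Fe-Ku-Le-Ma}), which puts $V$ in $C(T_\phi)$ and, crucially, yields the \emph{commutation} equation $\phi\circ W-\phi=\xi\circ T-\xi$, a single-step coboundary equation over $T$. Your root equation $\sum_{j<s}\psi\circ W^j=\phi^{(r)}$ carries an $s$-fold ergodic sum and does not directly support the level-by-level analysis (one can salvage a one-step equation by subtracting the relation at $x$ and at $Wx$, getting $\psi\circ T^r-\psi=\phi^{(r)}\circ W-\phi^{(r)}$, but you do not perform this reduction).

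Second, and decisively, the heart of the theorem --- the non-solvability of the cohomological equation --- is precisely what you leave as a sketch, and the mechanism you describe is not correct. A measurable solution $\psi$ (or $\xi$) is merely $\vep$-constant on $(1-\vep)n_t$ of the levels of $D^{(t)}$, with \emph{no a priori localization} of its bad levels; so nothing forces its failure of measurability with respect to the finite $\sigma$-algebra generated by $D^{(t)}$ to ``sit near'' any prescribed family of levels, and ``no cancellation can occur'' is an assertion, not an argument. The paper's proof works by a propagation of conditional distributions: the equation $\xi\circ T-\xi=\phi\circ W-\phi$ transfers the conditional law of $\xi$ from level $\ell$ to level $\ell+1$ (possibly flipped by the constant values of $\phi$ and $\phi\circ W$) along stretches where both are constant on levels; the separation $\left\|b_t-j\frac{n_t}{k}\right\|\ge\eta n_t$ from Proposition~\ref{p:a1} guarantees a run of $\eta n_t$ consecutive levels on which only $\phi\circ W$ has crossed a special level $D^{(t)}_{\frac{i}{k}n_t-1}$, so the non-degenerate conditional distribution supplied by PC (via~\eqref{eq:formcoc}--\eqref{eq:formcoc1}) is imposed on $\xi$ along $\eta n_t$ levels, contradicting $\vep$-constancy on all but $\vep n_t$ of them. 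This transfer mechanism --- not a matching of ``active levels'' --- is what converts the algebraic separation plus PC into a contradiction, and it is absent from your proposal; indeed you flag it yourself as the expected main obstacle, so the proof is not complete.
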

\begin{proof} Assume that $(T_\phi)^r$ and $(T_\phi)^s$ are  isomorphic (we assume that $r,s$ are coprime with $k$). Then $(T_\phi)^r$ has an $s$-th root which is in the centralizer of $(T_\phi)^r$. But, by Lemma 4.3 in \cite{Fe-Ku-Le-Ma} it follows that $C(T_\phi)=C((T_\phi)^r)$, hence this $s$-th root is in $C(T_\phi)$. It has to be of the form $\widetilde{W}=W_\xi$, where $(\widetilde{W})^s=(T_\phi)^r$. It follows that $W=T^{r/s}$. In other words, we can lift the rotation by $r/s$ to $C(T_\phi)$. Hence
\beq\label{a3}
\phi\circ W-\phi=\xi\circ T-\xi.\eeq

Recall that at the stage $t$, $W$ is represented by $b_t$ (i.e.\ on the tower $D^t$, $W$ acts as $T^{b_t}$).
In view of~\eqref{a6},
\beq\label{a4}\left\|(n_t-b_t)-i\frac{n_t}k\right\|\geq\eta n_t\eeq
uniformly in $t$ and $i\in\{0,\ldots,k-1\}$.

Fix $\vep>0$ small. Then for $t$ large enough the function $\xi$ will be
well approximated by the levels of $D^{(t)}$. Hence for $(1-\vep)n_t$ levels of $D^{(t)}$ we will have that $\xi$ is up to a set of conditional measure $\vep$ constant. In what follows we will speak about $\xi$ being $\vep$-constant on a level.

Consider  $D^{(t)}_{b_t}=T^{b_t}D^{(t)}_0$. In view of~\eqref{a4} there exists $0\leq i\leq k-1$ such that $$
\frac ik n_t<b_t< \frac{i+1}k n_t$$ with
$b_t - \frac ik n_t\geq \eta n_t$ and $\frac{i+1}k n_t -b_t|\geq \eta n_t$. It follows that we can find $0\leq\ell<\frac{1}k n_t$~\footnote{Note that $0<\frac{i+1}k n_t-b_t<\frac{1}k n_t$.}, $\ell<\frac{i+1}k n_t-b_t$ such that
$\xi|_{D^{(t)}_\ell}$ is $\vep$-constant. Moreover $\phi|_{WD^{(t)}_\ell}$ and $\phi|_{D^{(t)}_\ell}$ are also constant, and~\eqref{a3} makes $\xi|_{D^{(t)}_{\ell+1}}$ $\vep$-constant either with the same distribution as $\xi|_{D^{(t)}_\ell}$ or by replacing $0$ by $1$ and vice versa. We now repeat the same argument with $\ell$ replaced by $\ell+1$. We keep going in the same manner and we obtain consecutive levels on which $\xi$ is $\vep$-constant until $\ell$ reaches the value $\frac{i+1}k n_t-b_t$. Now, $\phi\circ W$ will have the same distribution as that of $\phi$ on $D^{(t)}_{\frac{i+1}k n_t-1}$, while $\phi$ on $D^{(t)}_{\frac{i+1}k n_t-b_t}$ is still constant and $\xi$ is $\vep$-constant. It follows that up to $\vep$ the conditional distribution of $\xi$ on $TD^{(t)}_{\frac{i+1}k n_t-b_t}$ is that of $\phi$ on $D^{(t)}_{\frac{i+1}k n_t}$ or its ``mirror''. When we consider the next step $\phi\circ W$ and $\phi$ will be again constant, so $\xi$ on the next level will have the same distribution as on
the previous level (or
its ``mirror''). This will be continued for $\eta n_t$ levels and because $\phi$ satisfies \deleted{the} PC, $\xi$ cannot be measurable, a contradiction.

The last assertion follows from non-isomorphism of the powers and Corollary~4.7 in \cite{Fe-Ku-Le-Ma}.
\end{proof}

\begin{Remark}\em For Morse cocycles, the above result can also be deduced from a theorem proved by Kwiatkowski and Rojek \cite{Kw-Ro} about the centralizer of Morse subshifts.
\end{Remark}

Now,  we obtain that whenever the (uniquely ergodic) subshift $(X_x,S)$ determined by a $k$-Morse sequence $x\in\{0,1\}^{\N}$ is given by \deleted{and} a $k$-Morse cocycle satisfying PC, then  the
uniquely ergodic system $(X_x,S)$ is an extension of $(X_{\widehat{x}},S)$ for which the assumptions of Theorem~\ref{t:SMext} hold and therefore, it satisfies the strong MOMO property.
Using~\ref{thmAPm} (and Remark~\ref{r:RS17}), we hence obtain the following.

\begin{Cor}\label{c:Kak+RS} In each uniquely ergodic model of the system determined by a Kakutani sequence\footnote{It has been already known that the subshift determined by any Kakutani sequence is M\"bius disjoint \cite{Bo,Gr,Fe-Ku-Le-Ma,Ve}.} (in particular, by the Thue-Morse sequence) we have the strong MOMO property (relatively to $\bfu$ satisfying~\eqref{e:mrt}). The same result holds for any Rudin-Shapiro sequence.\end{Cor}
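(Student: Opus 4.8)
The plan is to assemble this corollary from the machinery already in place: realize the Morse/Kakutani (or Rudin--Shapiro) subshift as a $\Z/2\Z$-extension of a uniquely ergodic odometer model, verify the joining hypothesis of Theorem~\ref{t:SMext} by invoking Theorem~\ref{t:a1}, deduce strong MOMO in this one concrete model, and finally propagate it to \emph{all} uniquely ergodic models through the equivalence \ref{newP1}$\Leftrightarrow$\ref{P3} of the Main Theorem~\ref{thmAPm}.

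First I would recall the structure set up above. Given a Kakutani sequence $x$, the sequence $\widehat{x}$ defined by~\eqref{eq:toeplitz} is a regular Toeplitz sequence whose orbit closure $(X_{\widehat{x}},S)$ is a uniquely ergodic model of the associated odometer $(X,T)$, and there is a topological isomorphism $(X_x,S)\cong(X_{\widehat{x}}\times\Z/2\Z,S_\varphi)$ with the Morse cocycle $\varphi(z)=z(0)$. Because the odometer attached to a Kakutani sequence has ${\rm Spec}(T)=\{2\}$ and hence a nontrivial rational eigenvalue, the second assertion of Corollary~\ref{c:SMds} shows that $(X_{\widehat{x}},S)$ enjoys the strong MOMO property relatively to any bounded multiplicative $\bfu$ satisfying~\eqref{e:mrt}. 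This supplies exactly the base system with strong MOMO required by Theorem~\ref{t:SMext}, taking $Y=\Z/2\Z$ with its uniform (Haar) measure $\mu_Y$; since $(X_x,S)$ is uniquely ergodic, the unique $S_\varphi$-invariant measure is the corresponding product $\mu_X\otimes\mu_Y$.

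Next I would verify the joining hypothesis of Theorem~\ref{t:SMext}. The odometer has small rational spectrum, and the cocycle $\varphi$ satisfies PC: for a Kakutani sequence (in particular the Thue--Morse sequence) this follows from Proposition~\ref{p:PCbounded}, since the defining blocks have bounded lengths, while for the Rudin--Shapiro case the relevant object is a $2$-Morse cocycle satisfying PC by Remark~\ref{r:RS17}. Theorem~\ref{t:a1} then applies and gives that, for all sufficiently large primes $r\neq s$, the only ergodic joinings of $(S_\varphi)^r$ and $(S_\varphi)^s$ are relatively independent extensions of isomorphisms between $S^r$ and $S^s$; equivalently, every $S_\varphi^r\times S_\varphi^s$-ergodic measure is, up to permutation of coordinates, of the product form $\rho\otimes\mu_Y\otimes\mu_Y$ demanded in Theorem~\ref{t:SMext}. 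Hence Theorem~\ref{t:SMext} yields the strong MOMO property (relatively to $\bfu$) for the model $(X_x,S)\cong(X_{\widehat{x}}\times\Z/2\Z,S_\varphi)$.

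Finally I would promote this to all uniquely ergodic models. The uniquely ergodic system $(X_x,S)$ is itself a uniquely ergodic model of the ergodic measure-theoretic system $(Z,\cd,\kappa,R)$ given by its unique invariant measure; having just established strong MOMO for it, Property~\ref{newP1} holds for $(Z,\cd,\kappa,R)$, and by the equivalence \ref{newP1}$\Leftrightarrow$\ref{P3} in~\ref{thmAPm} we obtain Property~\ref{P3}, namely that \emph{every} uniquely ergodic model satisfies strong MOMO. The Rudin--Shapiro statement is identical, with the PC input supplied by Remark~\ref{r:RS17} instead of Proposition~\ref{p:PCbounded}. I expect no genuine analytic obstacle here, since all the hard work is already encapsulated in Theorems~\ref{t:SMext} and~\ref{t:a1}; the only delicate points are bookkeeping ones---confirming that $(X_{\widehat{x}},S)$ is indeed a uniquely ergodic odometer model so that the base carries strong MOMO, that $\mu_X\otimes\mu_Y$ is exactly the unique invariant measure of $S_\varphi$, and that PC genuinely holds in each listed example so that Theorem~\ref{t:a1} is applicable.
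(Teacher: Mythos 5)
Your proposal is correct and follows essentially the same route as the paper: realize the subshift as the Morse-cocycle extension $(X_{\widehat{x}}\times\Z/2\Z,S_\varphi)$ of the uniquely ergodic Toeplitz model of the odometer, get strong MOMO on the base from the second assertion of Corollary~\ref{c:SMds}, verify PC (Proposition~\ref{p:PCbounded}, resp.\ Remark~\ref{r:RS17}) so that Theorem~\ref{t:a1} supplies the joining hypothesis of Theorem~\ref{t:SMext}, and then propagate to all uniquely ergodic models via \ref{thmAPm}. The only cosmetic difference is that the paper cites its dedicated remark on Kakutani sequences for PC, of which Proposition~\ref{p:PCbounded} is the generalization, so your citation is equally valid.
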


\subsection{Substitutions of constant length}
Let $A$ be a finite alphabet, $\# A\geq2$. Let $q\geq2$ be a fixed integer and $\theta:A\to A^q$ be a primitive aperiodic substitution of constant length~$q$ \cite{Qu}. Recall that
$\theta$ is extended to a morphism of the monoid $A^*$  by the formula
$$
\theta(a_0\cdots a_{\ell-1}):=\theta(a_0)\cdots\theta(a_{\ell-1}).
$$
Similarly, we can extend $\theta$ to a map defined on $A^{\N}$.
We denote by $X_\theta$ the two-sided associated subshift:
\begin{multline*}
  X_\theta:=\Bigl\{x=(x(n),\ n\in\Z)\in A^\Z: \\
  \forall m\le n, \exists t\ge0, \exists a\in A, x(m,n)\text{ is a subblock of }\theta^t(a)\Bigr\}.
\end{multline*}
Let $S$ be the shift map on $X_\theta$. We recall that $(X_\theta,S)$ is uniquely ergodic, and we denote by $\mu_\theta$ the unique $S$-invariant probability measure on $X_\theta$.
To each $\theta$ we can associate the {\em column number}.
\begin{Def}[Kamae~\cite{Kamae72}]
\em
  The \emph{column number} of the substitution $\theta$ is the number
  \[
    c(\theta) := \min_{t\ge1} \min_{0\le \ell \le q^t-1} \#\{\theta^t(a)(\ell): a\in A\}.
  \]
\end{Def}
If by $X_q$ we denote the odometer determined by $n_t:=q^t$, $t\geq0$, then $(X_q,T)$ is the maximal equicontinuous factor of $(X_\theta,S)$ and moreover (see \cite{Kamae72,Qu})
\beq\label{eq:cext}
\mbox{$(X_\theta,\cb(X_\theta),\mu_\theta,S)$ is an a.e.\ $c$-extension of $(X_q,\cb(X),\mu_{X_q},T)$.}\eeq

\subsubsection{Bijective substitutions}
A substitution $\theta\colon A\to A^q$ (as above) is called {\em bijective} if the map
$$
\tau_i(a):=\theta(a)(i),\;a\in A$$
is a bijection of $A$ for each $i=0,1,\ldots,q-1$.
Using the notion of group substitution, it is implicitly proved in \cite{Fe-Ku-Le-Ma}\footnote{This follows from the proofs of Proposition 4.5 and Theorem 5.4 (see also Proposition 4.2) in \cite{Fe-Ku-Le-Ma}.} that the assumptions of Theorem~\ref{t:SMext} are satisfied. It follows that:

\begin{Cor}\label{p:SMbij} If $\theta$ is bijective then the strong MOMO property (relatively to \added{a bounded multiplicative} $\bfu$ satisfying~\eqref{e:mrt}) is satisfied in each uniquely ergodic model of $(X_\theta,\cb(X_\theta),\mu_\theta,S)$.\end{Cor}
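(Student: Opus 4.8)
The plan is to produce a single uniquely ergodic model of $(X_\theta,\cb(X_\theta),\mu_\theta,S)$ that satisfies the strong MOMO property and then to invoke the equivalence \ref{newP1}$\Leftrightarrow$\ref{P3} supplied by the \ref{thmAPm}. The obvious candidate model is the subshift $(X_\theta,S)$ itself, which I would treat as a fibered extension of its maximal equicontinuous factor.

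Since $\theta$ has constant length $q$, that factor is the $q$-adic odometer $(X_q,T)$ with $n_t=q^t$, and by~\eqref{eq:cext} the system is an a.e.\ $c$-extension of it. For a \emph{bijective} substitution the maps $\tau_i\colon a\mapsto\theta(a)(i)$ are permutations of $A$, and the theory of group substitutions used in \cite{Fe-Ku-Le-Ma} realises $(X_\theta,\mu_\theta,S)$ as a group extension $(X_q\times G,T_\phi)$, $T_\phi(x,g)=(Tx,\phi(x)g)$, of the odometer by a finite group $G$ (of order $c(\theta)$), with fibre measure $\mu_Y=\lambda_G$ the Haar measure on $G$. This places $(X_\theta,S)$ in exactly the form required by Theorem~\ref{t:SMext}, with base $(X_q,T)$. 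The base is an ergodic discrete-spectrum system whose rational spectrum is the finite set of primes dividing $q$ (``small rational spectrum''), so by the second assertion of Corollary~\ref{c:SMds} it is uniquely ergodic and enjoys the strong MOMO property relatively to any $\bfu$ satisfying~\eqref{e:mrt}.

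The decisive step, and the main obstacle, is to verify the joining hypothesis of Theorem~\ref{t:SMext}: for all sufficiently large distinct primes $r\neq s$, every $T_\phi^r\times T_\phi^s$-invariant ergodic measure on $(X_q\times G)^2$ must, up to a permutation of coordinates, be relatively independent over the base, i.e.\ of the form $\rho\otimes\lambda_G\otimes\lambda_G$ for some $T^r\times T^s$-invariant $\rho$ on $X_q\times X_q$. Because the odometer is \emph{not} totally ergodic, the AOP machinery of Section~\ref{sec:aop2P1} is unavailable, and the disjointness of the powers has to be obtained \emph{relative} to the base, exactly as in the $k$-Morse argument behind Theorem~\ref{t:a1}. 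Here I would quote the group-substitution computations of \cite{Fe-Ku-Le-Ma}: Proposition~4.2 and Proposition~4.5 describe the centralizer and the possible roots, and Theorem~5.4 shows that for $r,s$ large (coprime to $\#G$ and to the primes dividing $q$) the powers $T_\phi^r$ and $T_\phi^s$ have no common factor beyond the odometer, so that no off-diagonal ergodic joining can persist over the Haar fibres. This forces precisely the relatively independent form demanded by the theorem.

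Granting this, Theorem~\ref{t:SMext} yields the strong MOMO property (relatively to $\bfu$ satisfying~\eqref{e:mrt}) for $(X_q\times G,T_\phi)$, hence for the isomorphic model $(X_\theta,S)$. This verifies Property~\ref{newP1} for the ergodic system $(X_\theta,\cb(X_\theta),\mu_\theta,S)$, and the \ref{thmAPm} then upgrades it to Property~\ref{P3}, namely that \emph{every} uniquely ergodic model of $(X_\theta,\cb(X_\theta),\mu_\theta,S)$ satisfies the strong MOMO property. This is the assertion of the corollary.
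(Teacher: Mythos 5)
Your argument is correct and follows essentially the same route as the paper: the paper likewise realizes the bijective substitution system (via the group-substitution machinery) as an extension of the odometer whose base has the strong MOMO property by Corollary~\ref{c:SMds} under~\eqref{e:mrt}, verifies the joining hypothesis of Theorem~\ref{t:SMext} by citing exactly the same results of \cite{Fe-Ku-Le-Ma} (Propositions 4.2 and 4.5 and Theorem 5.4), and then upgrades to all uniquely ergodic models through \ref{newP1}$\:\Longrightarrow\:$\ref{P3} of the \ref{thmAPm}. Your incidental glosses (e.g.\ asserting that $G$ has order $c(\theta)$, and taking the odometer itself as the topological base where strictly one should use a continuous model of the extension, as in the Toeplitz base for Morse cocycles) are minor imprecisions that do not alter the structure of the argument and match the deliberately terse level of detail in the paper's own proof.
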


\subsubsection{The synchronized case}
For one more case of substitutions the assertion of Proposition~\ref{p:SMbij} easily holds. Namely, this is the case when the column number $c(\theta)=1$. Indeed, in this case, by~\eqref{eq:cext}, the factor map
$$(X_\theta,\cb(X_\theta),\mu_\theta,S) \to (X_q,\cb(X_q),\mu_{X_q},T)$$
is a.e.\ 1-1. It easily follows that $(X_\theta,S)$ is a uniquely ergodic model of the odometer $(X_q,T)$, hence the result follows from Corollary~\ref{c:SMds}.

It is well-known (Cobham's theorem) that fixed points of substitutions of constant length are in one-to-one correspondence with automatic sequences, i.e.\ sequences generated by deterministic complete automata \cite{Qu}. Those automatic sequences which correspond to synchronized automata are called {\em synchronized}, and the substitutions with trivial column number are in 1-1 correspondence with synchronized automatic sequences \cite{Mu}. An independent proof of M\"obius disjointness in the synchronized case has been done in \cite{De-Dr-Mu}.

As all subshifts given by substitutions of constant length are M\"obius disjoint by a recent result of M\"ullner \cite{Mu}, it is natural to ask:
\begin{Question}
Is it true that all subshifts generated by substitutions of constant length satisfy the (strong) MOMO property (relatively to $\mob$)?
\end{Question}

\begin{Remark}
It seems that the main problem to obtain Corollary~\ref{p:SMbij} without any restriction on $\theta$ is a full description of the cocycle $\va\colon X_q\to \cs(\{0,1,\ldots,c(\theta)-1)\})$ which is behind
the statement~\eqref{eq:cext}. \end{Remark}
\small

\bibliography{MOMO}

\end{document}